\newenvironment{proof}{{\bf Proof.} \  }{{\hspace*{1ex}}\hfill$\Box$} 
\newtheorem{definition}{Definition} 
\newtheorem{proposition}[definition]{Proposition} 
\newtheorem{theorem}[definition]{Theorem}  
\newtheorem{lemma}[definition]{Lemma}   
\newtheorem{corollary}[definition]{Corollary}   
\newtheorem{remark}[definition]{{Remark}}   
\newtheorem{example}[definition]{{Example}}
\newcommand{\calA}{\mathcal{A}} 
\newcommand{\calE}{\mathcal{E}}
\newcommand{\Z}{{\mathbb Z}}
\newcommand{\Q}{{\mathbb Q}}
\newcommand{\R}{{\mathbb R}} 
\newcommand{\C}{{\mathbb C}}
\newcommand{\HH}{{\mathbb H}} 
\newcommand{\LL}{{\mathbb L}}
\begin{document}

\begin{center} 
{\bf\LARGE Self-closeness numbers of  finite cell complexes } 
%{\bf\large SELF-CLOSENESS NUMBERS OF FINITE CELL COMPLEXES}  

\vspace{5ex} 

{\large  Nobuyuki  Oda\footnote{Department of Applied Mathematics, Faculty of Science, Fukuoka University,   Fukuoka 814-0180,  Japan,  \\ 
e-mail : odanobu@fukuoka-u.ac.jp }   
and 
  Toshihiro Yamaguchi\footnote{Faculty of Education, Kochi University, 2-5-1 Akebono-cho,  Kochi 780-8520, Japan,    \\ 
e-mail :  tyamag@kochi-u.ac.jp } }  
%{\large  Nobuyuki  Oda (1)   and   Toshihiro Yamaguchi  (2) }   
\end{center}

\

{\sc Abstract.} 
We reformulate the inequalities among self-closeness numbers of spaces in cofibrations making use of homology dimension and show that the self-closeness number  of a space is less than or equal to the homology dimension of the space. 
Then  we prove a relation  of self-closeness numbers and  the connectivity for manifolds satisfying Poincar\'{e} duality. 
On the other hand we determine the self-closeness numbers of the real projective spaces,   lens spaces and a cell complex defined by  Mimura and Toda. 
Moreover, making use of the models of Sullivan and Quillen, we show several properties of self-closeness number for  finite cell complexes, and rational examples are studied to obtain some  precise results. 
Finally, we prove relations among self-closeness numbers defined by homotopy groups, homology groups and cohomology groups.

\

\noindent 
keyword  : \  
self-homotopy equivalence, self-closeness number,   cell complex, Sullivan model,  Quillen model \\ 
MSC-class :  55P10;      55P62;   55Q05;   55R05

\

\section{Introdution}

Choi and Lee \cite{ChoiLee15} introduced  the {\it self-closeness number $N\calE (X)$} of a space $X$ by 
$$N\calE (X)  = \min \{\, k \ | \ \calA_{\sharp}^{k} (X) = \calE (X) \},$$   where 
$\calA_{\sharp}^{k} (X)  = \{\, f \in [X,X] \ | \  f_{\sharp} : \pi_{i} (X) \xrightarrow{\cong} \pi_{i} (X) \ \ \textrm{for any} \ \ i \leq  k\}$ and  $\calE (X)$ is the set of homotopy classes of self-homotopy equivalences of a space $X$ (see Definition \ref{def:selfcloseness}).   

We studied self-closeness numbers of spaces in cofibrations  \cite{OdaYama17} and fibrations \cite{OdaYama18}. This paper is a continuation of \cite{OdaYama17} and the following topics are discussed and some interesting finite cell complexes are studied as examples.

In Section \ref{sec:OdaYamaInprove}  
we improve and reformulate Theorems 3 and 4 of \cite{OdaYama17}   by making use of the homology dimension  $H_*\mbox{-}\dim (X)$ of a space $X$ in place of  $\dim (X)$. 
For any $1$-connected CW-complexes $X$  we show by Theorem \ref{thm:NEXleqHdimX}:   
  {\it $(1)$ $N\calE (X) \leq H_*\mbox{-}\dim (X) + 1$}, and moreover,     {\it $(2)$ If $H_{n} (X)$  is a finitely generated abelian group for  $n=H_{*}\mbox{-}\dim (X)$, then \  $N\calE (X) \leq H_*\mbox{-}\dim (X)$.} 
Notice that  if $X$ is not $1$-connected  it does not hold.
For example,  $H_*\mbox{-}\dim (\R P^{2n}) = 2n-1$   and   \ $N\calE (\R P^{2n}) = 2n$ by Theorem \ref{thm:NERPn}.  We note that $\R P^{2n}$ is not orientable.   These results are interesting comparing with the result of Theoem \ref{thm:XneqiSn}. 
Moreover, Theorem \ref{thm:17Thm3Imporved} shows:     
{\it Let $m \geq 3$ and $B$  a $1$-connected CW-complex with   $H_{*}\mbox{-}\dim (B) \leq m-2$. Assume that $H_{m-2} (B)$  is a finitely generated abelian group.  Let $\gamma  :S^{m-1} \to B$ be a generator   of a direct summand $\Z \subset \pi_{m-1} (B)$.  
If  $a$ is a non-zero integer, then 
 $N\calE (B \cup_{a\gamma} e^{m})  \leq    N\calE (B)$.  } 
We obtain a condition for $N\calE (B \cup_{a\gamma} e^{m}) = N\calE (B)$ by Theorem \ref{thm:17Thm4Improved}.   

In Section \ref{sec:manifolds}   
we study manifolds which satisfy Poincar\'e duality. 
The relation $N{\mathcal E}(X)\leq \dim X$ holds for any $0$-connected finite complex $X$  by Theorem 2 of \cite{ChoiLee15}. Let $[r]$ be the integer part of a rational number $r$. Making use of Theorem \ref{thm:17Thm3Imporved},  we show the following result by Theorem \ref{thm:XneqiSn} which gives  a relation  of self-closeness numbers and  the connectivity for manifolds satisfying Poincar\'{e} duality:  {\it  Let $n \geq 3$. 
Let $X$ be a  connected   closed  $n$-manifold.  If $X$ is $s$-connected for some $1 \leq s \leq [n/2]$ and  $X\not\simeq S^n$, then  $N{\mathcal E}(X)\leq n - s -1$.}

In Section \ref{sec:RProjAndMT} we determine self-closeness numbers of some complexes which are not determined in our previous papers.  We first determine the self-closeness number $N\calE(\R  P^{n})$ for the real projective space $\R P^{n}$ by constructing self-maps so that we have  $N\calE(\R  P^{n}) =  n$ for any integer $n \geq 1$ by Theorem \ref{thm:NERPn}; In \cite{OdaYama17} we studied the case where spaces are $1$-connected and $\R  P^{n}$ was not studied there.

The self-closeness number of the Lens space $L(m;q_{1}, q_{2}, \cdots ,q_{n})$ is determined by Theorem \ref{thm:NELensSpace} as  $N\calE(L(m;q_{1}, q_{2}, \cdots ,q_{n})) = 2n+1$ for  $m \geqq 2$ and $n \geqq 1$.

Concerning the cell complexes defined by  Mimura and Toda in \cite[Section 4]{MimuraToda71},  we study a related complex $(S^{3} \vee \C P^{2})\cup_{\beta} e^{12}$ for a map 
\[\beta =a[[[w, \iota_3],w], \iota_3] + b[[\iota_3,h],h] + c[[w,h],w]  \colon S^{11}  \to S^{3} \vee \C P^{2}  \]   
for some {\it integers} $a,b,c$, where $w=[\iota_3,\iota_2]\in \pi_4(B)$ and $h = h_{5} :S^5\to \C P^2$ is the Hopf map.  Then,   we show that $N\calE (B\cup_{\beta} e^{12})=3$ if and only if  at least one of $a,b,c$ is non-zero (Theorem \ref{thm:MimuraToda}).

In section \ref{sec:RationalHomotopy} we investigate the self-closeness numbers in the category of rationalized $1$-connected finite CW-complexes. In our papers \cite{OdaYama17,OdaYama18} we already studied some properties of the self-closeness numbers making use of the Sullivan and Quillen models \cite{S,Q}; however, in this section we analyze more precise results which  were discussed in the previous papers. 

In the rational homotopy theory, the {\it formality} (see Definition 6.1) is an important property (\cite{DGMS,FHT,NM}).
Let $X_0$ be the rationalization (\cite{HMR}) of a space $X$. 
We first show that (Theorem \ref{thm:HisoXYXformalNE}) 
{\it if $H^*(X;\Q )\cong H^*(Y;\Q)$ as graded algebras and  $X$ is formal,  then  $N{\mathcal E}(X_0)\geq N{\mathcal E}(Y_0)$. }

Let $m \geq 3$ and $B$  a $1$-connected CW-complex with    $H_{*}\mbox{-}\dim (B) \leq m-2$. Let $\gamma  :S^{m-1} \to B$ be a generator   of a direct summand $\Z \subset \pi_{m-1} (B)$. 
Then $N\mathcal{E}( (B\cup_{\gamma} e^n)_0)=n$  if and only if $(B\cup_{\gamma} e^n)_0 \sim (B\vee S^n)_0$  by Proposition \ref{ee}.

A relation between Proposition \ref{ee} and formality is given by Theorem \ref{formalen}  which gives a relation of self-closeness numbers and the formality property \cite{DGMS}: 
{\it Let $n \geq 3$ and $B$  a $1$-connected CW-complex with    $H_{*}\mbox{-}\dim (B) \leq n-2$. Let $\gamma  :S^{n-1} \to B$ be a generator   of a direct summand $\Z \subset \pi_{n-1} (B)$.  Let $B$ be a formal space with $2H_{*}\mbox{-}\dim (B)  <n$.
Then 
 $X=B\cup_{\gamma} e^n$ is formal  if and only if $N\mathcal{E}( X_0)=n$.  } 
As we see in Remark \ref{formal} the condition of Theorem \ref{formalen} is best possible. 
For $1$-connected $n$-manifolds $X$ and $Y$, the connected sum $X\sharp Y$ of $X$ and $Y$ \cite[p. 108]{FOT}  is also an $n$-manifold. 
We prove an inequality  {\it  $N{\mathcal E}((X\sharp Y)_0)\leq N{\mathcal E}((X\vee Y)_0)$} for $1$-connected  compact closed orientable $n$-manifolds  $X$ and $Y$ (Theorem \ref{thm:NEXsharpYleqvee}).  
We also show that 
{\it if $X$ is a $1$-connected   c-symplectic manifold,  then  $N{\mathcal E}(X_0)= 2$} (Theorem  \ref{thm:Xsymplecticmfd}).   
We give  examples  of four cell complexes which  have the same homology groups and four different self-clossness numbers (Example \ref{Exa:fourCellcpx}).  
Four attaching maps of a cell are considered in Example  \ref{Exa:fourAttachingMaps}.    
We also give examples which satisfies $N\calE (X_0) <N\calE (X) $ and examples which satisfy $N\calE (X_0) =N\calE (X) $ (Remark \ref{rem:exaNEXnonRatandRat}). The authors do not know whether or not the relation $N\calE (X_0) \leq N\calE (X)$ holds in general.

Finally, in Section \ref{sec:NstEX} 
we define self-closeness numbers $N_{*}\calE (X)$ and $N^{*}\calE (X)$ of a space $X$ making use of the homology group $H_{i} (X)$ and the cohomology group $H^{i} (X)$ in place of the homotopy group  $\pi_{i} (X)$, respectively.  
If  $X$ and $Y$ have the same homotopy type, then the equalities $N_{*}\calE (X) = N_{*}\calE (Y)$ and $N^{*}\calE (X) = N^{*}\calE (Y)$ hold  (Proposition \ref{prop:NstEXHomInv}). We prove various relations among $N\calE (X)$,  $N_{*}\calE (X)$ and $N^{*}\calE (X)$.

\section{The inequality $N\calE (X) \leq H_*\mbox{-}\dim (X)$} 
\label{sec:OdaYamaInprove} 

In this paper, we consider based spaces and based maps and based homotopies. The symbol  $f \simeq g : X \to Y$ is a based homotopy relation and $[f] : X \to Y$ is the homotopy class of $f$ and $[X,Y]$ is the set of homotopy classes $[f] : X \to Y$.

We define the {\it homology dimension} $H_{*}\mbox{-}\dim (X)$ of a space $X$ by 
\[ H_{*}\mbox{-}\dim (X) :=\max \{\, i \geq 0  \mid H_{i}(X)\neq 0\}.\] 
We understand that $H_{*}\mbox{-}\dim (X) = \infty$ if  $H_{i}(X)\neq 0$ for infinitely many $i$.  Then, making use of the homology dimension, we can improve  Proposition 2 of  \cite{OdaYama17} as  follows: 

\begin{proposition}  
\label{prop:17Prop2Improved} %%\label{prop:existsmiotamminus1} 
Let $m \geq 3$ and $B$ a $1$-connected CW-complex with  $H_{*}\mbox{-}\dim (B) \leq m-2$.   
Let $\gamma =[r] : S^{m-1} \to B$ be a map and $s$  an integer. 
Consider the following diagram\/$:$ 
$$\xymatrix{
S^{m-1}   \ar[rr]^-{\gamma} 
&  & B \ar[d]^{ \ g}  \ar[rr]^-{i}
&  &  B \cup_{\gamma} e^{m} \ar[d]^{\  f  }^{} \ar[rr]^-{q} & & S^{m} \ar[d]^{\  s\iota_{m} } \\ 
S^{m-1} \ar[rr]^-{\gamma}&  &  B   \ar[rr]^-{i}
&  &    B \cup_{\gamma}  e^{m}\ar[rr]^-{q} & & S^{m}  
}$$ 
If the two squares in the above diagram are homotopy commutative, then the following diagram is homotopy commutative\/$:$  
$$\xymatrix{
S^{m-1} \ar[d]_{s\iota_{m-1} \  } \ar[rr]^-{\gamma} 
&  & B \ar[d]^{ \ g}      \\ 
S^{m-1} \ar[rr]^-{\gamma}&  &  B   
}$$ 
It follows that if $\gamma : S^{m-1} \to B$ is a generator of $\pi_{m-1} (B)$ and $g \in \calE(B)$, then $s \gamma$ must be a generator of $\pi_{m-1} (B)$.  
\end{proposition}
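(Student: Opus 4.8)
The plan is to read the desired conclusion as the single relation $[g\circ\gamma]=s[\gamma]$ in $\pi_{m-1}(B)$ (since $\gamma\circ(s\iota_{m-1})$ represents $s[\gamma]$), and to manufacture the scalar $s$ by transporting the degree of $s\iota_m$ on the top cell downward through the cofibre sequence via homology. Throughout write $C=B\cup_\gamma e^m$. First I would replace $f$ by an honest self-map of the pair $(C,B)$: since $i:B\to C$ is a cofibration and the first square supplies a homotopy $f\circ i\simeq i\circ g$, the homotopy extension property lets me homotope $f$ to a map (still denoted $f$) with $f|_B=g$. This change does not affect the second square up to homotopy, so I retain $q\circ f\simeq s\iota_m\circ q$.

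Next I would exploit the hypothesis $H_{*}\mbox{-}\dim(B)\leq m-2$, which gives $\tilde H_{m-1}(B)=\tilde H_m(B)=0$. From the cofibre sequence $S^{m-1}\xrightarrow{\gamma}B\xrightarrow{i}C\xrightarrow{q}S^m$ the homology long exact sequence then forces $q_*:\tilde H_m(C)\xrightarrow{\cong}\tilde H_m(S^m)=\Z$ and $\tilde H_{m-1}(C)=0$. Applying $H_m$ to $q\circ f\simeq s\iota_m\circ q$ and using that $s\iota_m$ has degree $s$, I conclude that $f_*=s\cdot\mathrm{id}$ on $\tilde H_m(C)$.

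Then I would pass to the pair $(C,B)$. Since this pair is $(m-1)$-connected and $B$ is $1$-connected, the relative Hurewicz theorem gives $\pi_m(C,B)\cong H_m(C,B)$, while the vanishing $\tilde H_{m-1}(B)=\tilde H_m(B)=0$ identifies $H_m(C,B)\cong\tilde H_m(C)\cong\Z$; under these identifications the boundary $\partial:\pi_m(C,B)\to\pi_{m-1}(B)$ carries a generator to $[\gamma]$. Because all three isomorphisms are natural in $f$, the previous step shows that $f_*$ acts on $\pi_m(C,B)$ as multiplication by $s$. Naturality of $\partial$ for the map of pairs $(f,f|_B)=(f,g)$ now yields $[g\gamma]=g_*\,\partial(\mathrm{gen})=\partial\,f_*(\mathrm{gen})=s\,\partial(\mathrm{gen})=s[\gamma]$, which is exactly the commutativity of the final square. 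For the closing assertion, if $g\in\calE(B)$ then $g_*$ is an automorphism of $\pi_{m-1}(B)$, so $[g\gamma]=g_*[\gamma]$ is a generator whenever $[\gamma]$ is; since $[s\gamma]=[g\gamma]$, the map $s\gamma$ must then be a generator as well.

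The main obstacle will be the middle step: matching the homotopy-level degree $s$ (coming from $s\iota_m$ through the second square) with the algebraic multiplier acting on $\pi_m(C,B)$. This rests on identifying, compatibly and naturally in $f$, the three groups $\pi_m(C,B)$, $H_m(C,B)$ and $\tilde H_m(C)$, which is precisely where the hypotheses $H_{*}\mbox{-}\dim(B)\leq m-2$ and the $1$-connectivity of $B$ are essential (they make the relative Hurewicz isomorphism available and kill the homology groups obstructing the two identifications).
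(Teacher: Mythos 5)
Your proposal is correct and follows essentially the same route as the paper: identify $f_*$ on $H_m(B\cup_\gamma e^m)\cong\Z$ as multiplication by $s$ via the second square, transfer this to $\pi_m(B\cup_\gamma e^m,B)$ by the relative Hurewicz isomorphism, and conclude via naturality of $\partial$ that $[g\circ\gamma]=s[\gamma]$. The only cosmetic difference is that the paper exhibits the generator of $\pi_m(B\cup_\gamma e^m,B)$ concretely as the class of the characteristic map $\overline{r}$ of the $m$-cell, whereas you argue abstractly with naturality; the content is the same.
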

\begin{proof} The condition $H_{*}\mbox{-}\dim (B) \leq m-2$ implies $H_{m}  (B \cup_{\gamma} e^{m})\cong \Z$ by the long homology exact sequence for the cofibration. Then the induced homomorphism $f_{*} : H_{m}  (B \cup_{\gamma} e^{m})\cong \Z \to H_{m}  (B \cup_{\gamma} e^{m})\cong \Z$ is the multiple by an integer $s$, and hence,  making use of the Hurewicz isomorphism, the induced homomorphism $f_{\sharp} : \pi_{m} (B \cup_{\gamma} e^{m} , B) \to \pi_{m} (B \cup_{\gamma} e^{m} , B) $ is the multiple by $s$. 

 Let $V^{m}$ be the $m$-disc so that $V^{m} \supset S^{m-1}$. 
Let $\overline{r} : (V^{m}, S^{m-1}) \to (B \cup_{\gamma} e^{m} , B)$ be the characteristic map of the $m$-cell. 
We may assume that $g= f|_B : B \to B$, the restriction of $f: B \cup_{\gamma} e^{m} \to B \cup_{\gamma} e^{m}$ by the given diagram, and therefore, we may consider the following commutative diagram. 
$$\xymatrix{
 \pi_{m} (S^{m}) \ar[d]_{{\rm id}_{\sharp} \ }^{=} & &      \ar[ll]_-{p_{\sharp}}^-{\cong} \pi_{m} (V^{m} , S^{m-1}) \ar[d]_{\overline{r}_{\sharp} \ } \ar[rr]^-{\partial}_-{\cong}       & &   \pi_{m-1} (S^{m-1}) \ar[d]^{r_{\sharp} \ }^{}  \\  
 \pi_{m} (S^{m}) \ar[d]_{s \iota_{m\sharp} \ }^{}  & & \ar[ll]_-{p_{\sharp}}^-{\cong} \pi_{m} (B \cup_{\gamma} e^{m} , B) \ar[d]_{f_{\sharp} \ } \ar[rr]^-{\partial} 
&  &  \pi_{m-1} (B) \ar[d]^{ \ g_{\sharp} }  \\ 
 \pi_{m} (S^{m})   & & \ar[ll]_-{p_{\sharp}}^-{\cong} \pi_{m} (B \cup_{\gamma} e^{m} , B)  \ar[rr]^-{\partial} 
&  &  \pi_{m-1} (B)  
}$$
The class $[\overline{r}]$ is a generator of $\pi_{m} (B \cup_{\gamma} e^{m} , B)$ and $\partial ([\overline{r}])  = [r]$. Therefore we have 
$f_{\sharp} ([\overline{r}]) = s [\overline{r}]$ and hence 
$$g \circ \gamma = [ g \circ r] =   g_{\sharp}  ([r])  =  g_{\sharp}  (\partial ([\overline{r}]))  = \partial (f_{\sharp} ([\overline{r}])) =  \partial ( s [\overline{r}])) = s \partial (  [\overline{r}])) = s [r]   = \gamma \circ s\iota_{m-1} . $$ 
\end{proof}

Choi and Lee \cite{ChoiLee15} introduced the following concept: 
\begin{definition}  \label{def:selfcloseness}  \rm  For any $0$-connected space $X$,  the subset $\calA_{\sharp}^{k} (X)$ of $[X,X]$ is defined by  
$$\calA_{\sharp}^{k} (X)  = \{\, f \in [X,X] \ | \  f_{\sharp} : \pi_{i} (X) \xrightarrow{\cong} \pi_{i} (X) \ \ \textrm{is an isomorphism for any} \ \ i \leq  k\},$$  
and the {\it self-closeness number $N\calE (X)$ of  $X$}  by  
$$N\calE (X)  = \min \{\, k \ | \ \calA_{\sharp}^{k} (X) = \calE (X) \}.$$ 
\end{definition} 

If $\calA_{\sharp}^{k} (X) \neq \calE (X)$ for any $k = 0,\, 1,\, 2,\, 3,\, \cdots$ or $\infty$, then we write $N\calE (X)= \O$. See Example 1 of \cite{ChoiLee15}.

Let \ 
$\pi_{*}\mbox{-}\dim (X):= \max \{ \, i \geq 0 \mid \pi_i(X)\neq 0\}$  \  be the homotopy dimension of $X$. We understand that $\pi_{*}\mbox{-}\dim (X) = \infty$  if $\pi_i(X)\neq 0$ for infinitely many $i$. 
If $X$ is a  connected  CW-complex, then  \ $N\calE (X) \leq \pi_*\mbox{-}\dim (X)$ by the definition of  $N\calE (X)$ and the Whitehead theorem.  We have the following relation between $N\calE (X)$  and $H_{*}\mbox{-}\dim (X)$.

\begin{theorem} \label{thm:NEXleqHdimX} 
  Let $X$ be a $1$-connected CW-complex.  Then, 
\begin{itemize}
\item[{\rm (1)}]  \    $N\calE (X) \leq H_*\mbox{-}\dim (X) + 1$.  
\item[{\rm (2)}] \ If $H_{n} (X)$  is a finitely generated abelian group  for  $n=H_{*}\mbox{-}\dim (X)$, then \  $N\calE (X) \leq H_*\mbox{-}\dim (X)$. 
\end{itemize} 
\end{theorem}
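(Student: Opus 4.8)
The plan is to prove both parts by verifying directly that $\calA_{\sharp}^{k}(X) = \calE(X)$ for the relevant threshold, namely $k = H_*\mbox{-}\dim(X)+1$ in (1) and $k = H_*\mbox{-}\dim(X)$ in (2). Write $n = H_*\mbox{-}\dim(X)$ and assume $n < \infty$, since otherwise the inequalities are vacuous (one always has $N\calE(X) \le \pi_*\mbox{-}\dim(X)$). Because every homotopy equivalence induces isomorphisms on all homotopy groups, the inclusion $\calE(X) \subseteq \calA_{\sharp}^{k}(X)$ holds for every $k$, so it suffices to establish the reverse inclusion. As $X$ is $1$-connected, the homology Whitehead theorem reduces the assertion ``$f \in \calE(X)$'' to the single statement that $f_* : H_i(X) \to H_i(X)$ is an isomorphism for all $i$; and since $H_i(X) = 0$ for $i > n$, only the range $i \le n$ is genuinely at issue.

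The key step is to convert the homotopy-group information carried by $f \in \calA_{\sharp}^{k}(X)$ into homology-group information. I would replace $f$ by the inclusion of $X$ into its mapping cylinder $M_f$, so that $M_f \simeq X$ is again $1$-connected and $f_{\sharp}$ on $\pi_i$ becomes the map $\pi_i(X) \to \pi_i(M_f)$. A short diagram chase in the long exact sequence of the pair $(M_f, X)$ shows that $f_{\sharp}$ being an isomorphism for all $i \le k$ forces $\pi_i(M_f, X) = 0$ for $i \le k$. The relative Hurewicz theorem, applicable because the pair is simply connected, then yields $H_i(M_f, X) = 0$ for $i \le k$, and feeding this back into the long exact homology sequence of $(M_f, X)$ shows that $f_* : H_i(X) \to H_i(X)$ is an isomorphism for $i \le k-1$ and an epimorphism for $i = k$.

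For part (1), take $k = n+1$: the conversion gives $f_*$ an isomorphism for $i \le n$, and since $H_i(X) = 0$ for $i > n$ this makes $f_*$ an isomorphism in every degree, so $f \in \calE(X)$ and $\calA_{\sharp}^{n+1}(X) = \calE(X)$. For part (2), take $k = n$: the conversion now guarantees only that $f_*$ is an isomorphism for $i \le n-1$ and an epimorphism for $i = n$, so the top degree must be treated separately. This is the main obstacle, and it is precisely where the hypothesis is used: $f_* : H_n(X) \to H_n(X)$ is a surjective endomorphism of the finitely generated abelian group $H_n(X)$, and a finitely generated abelian group is Hopfian, so a surjective endomorphism is automatically injective and hence an isomorphism. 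Thus $f_*$ is again an isomorphism in every degree, $\calA_{\sharp}^{n}(X) = \calE(X)$, and $N\calE(X) \le n$. The only delicate points are the exact-sequence bookkeeping that pins down the exact degree at which ``isomorphism'' degrades to ``epimorphism,'' and the observation that the finite-generation hypothesis serves solely to invoke the Hopfian property in the top degree.
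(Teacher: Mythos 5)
Your proposal is correct and follows essentially the same route as the paper: both arguments convert the hypothesis on $\pi_{\le k}$ into the statement that $f_*$ is an isomorphism on $H_{\le k-1}$ and surjective on $H_k$ via the (relative Hurewicz / homology) Whitehead theorem, then finish part (1) by vanishing of homology above degree $n$ and part (2) by the Hopfian property of the finitely generated abelian group $H_n(X)$. The only difference is that you spell out the mapping-cylinder proof of the Whitehead-theorem step, which the paper simply cites.
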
 
\begin{proof} (1) \ Let $f: X \to X$ be a map such that $f_\sharp : \pi_k (X) \to   \pi_k (X)$ is an isomorphism for $k \leq  H_{*}\mbox{-}\dim (X) +1$. Then by the Whitehead theorem we have $f_* : H_k (X) \to   H_k (X)$ is an isomorphism for $k \leq H_{*}\mbox{-}\dim (X)$  and  hence  $f_{*} : H_k (X) \to   H_k (X)$ is a bijection for all $k \geq 0$.  It follows that $f \in \calE(X)$. Therefore,  we see $N\calE (X) \leq  H_*\mbox{-}\dim (X) +1$. 

\noindent 
(2) \   Let $f: X \to X$ be a map such that $f_\sharp : \pi_k (X) \to   \pi_k (X)$ is an isomorphism for $k \leq n =H_{*}\mbox{-}\dim (X)$. Then by the Whitehead theorem we have $f_* : H_k (X) \to   H_k (X)$ is an isomorphism for $k \leq n -1$  and $f_{*} : H_n (X) \to   H_n (X)$ is a surjection and hence a bijection by the condition that $H_{n} (X)$   is a finitely generated abelian group. 
It follows that $f_{*} : H_k (X) \to   H_k (X)$ is a bijection for all $k \geq 0$ and hence $f \in \calE(X)$. Therefore,  we see $N\calE (X) \leq n = H_*\mbox{-}\dim (X)$. 
\end{proof}

\begin{example} \rm  If $X$ is not $1$-connected in Theroem  \ref{thm:NEXleqHdimX}, then the result does not hold: 
 Let $X = \R P^{2n}$ for $n \geq 1$. Then $H_*\mbox{-}\dim (\R P^{2n}) = 2n-1$ and $N\calE (\R P^{2n}) = 2n$ by Theorem  \ref{thm:NERPn}.  
\end{example}

Now, consider the following cofibration sequence: 
$$S^{m-1}   \xrightarrow{ \ \  a\gamma  \ \  }   B \xrightarrow{ \ \  i \ \  }  B \cup_{a\gamma} e^{m} \xrightarrow{ \ \  q \ \  }  S^{m}  \xrightarrow{ \ \  \Sigma  a\gamma \ \  }  \Sigma B,$$ 
where $\gamma  :S^{m-1} \to B$ is {\it a generator of a direct summand}\/  $\Z$ of the homotopy group $\pi_{m-1} (B)$ and $a$ is a non-zero integer. 
Theorem \ref{thm:NEXleqHdimX} enables us to improve Theorem 3 of  \cite{OdaYama17} making use of the homology dimension as follows:

\begin{theorem} \label{thm:17Thm3Imporved}   \ 
Let $m \geq 3$ and $B$  a $1$-connected CW-complex with   $H_{*}\mbox{-}\dim (B) \leq m-2$. Assume that $H_{m-2} (B)$  is a finitely generated abelian group.  Let $\gamma  :S^{m-1} \to B$ be a generator   of a direct summand $\Z \subset \pi_{m-1} (B)$.  
If  $a$ is a non-zero integer, then 
 $N\calE (B \cup_{a\gamma} e^{m})  \leq    N\calE (B)$. 
\end{theorem}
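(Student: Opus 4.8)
Set $n=N\calE(B)$. The plan is to prove directly that $\calA_{\sharp}^{n}(X)=\calE(X)$ for $X=B\cup_{a\gamma}e^{m}$, which gives $N\calE(X)\le n$ at once. Since $\calE(X)\subseteq\calA_{\sharp}^{k}(X)$ always holds, the content is to show that every $f\colon X\to X$ with $f_{\sharp}$ an isomorphism on $\pi_{i}(X)$ for all $i\le n$ is a homotopy equivalence. As $X$ is $1$-connected, by Whitehead's theorem it is enough to check that $f_{*}$ is an isomorphism on $H_{*}(X)$; and from the cofibration together with $H_{*}\mbox{-}\dim(B)\le m-2$ one computes $H_{i}(X)\cong H_{i}(B)$ for $i\le m-2$, $H_{m-1}(X)=0$, $H_{m}(X)\cong\Z$, and $H_{i}(X)=0$ for $i>m$. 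So I must control $f_{*}$ in the range $i\le m-2$ and on the top class $H_{m}(X)\cong\Z$.

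First I would record that $n=N\calE(B)\le m-2$: if $H_{*}\mbox{-}\dim(B)=m-2$ this is Theorem \ref{thm:NEXleqHdimX}(2), using the hypothesis that $H_{m-2}(B)$ is finitely generated, while if $H_{*}\mbox{-}\dim(B)<m-2$ it follows from Theorem \ref{thm:NEXleqHdimX}(1). Next, replacing $B$ by a homotopy equivalent complex I may assume $\dim B\le m-1$ (a $1$-connected complex whose homology vanishes above degree $m-2$ admits such a model, the one extra dimension absorbing any torsion in $H_{m-2}(B)$, and all hypotheses transport along the equivalence); then $B=X^{(m-1)}$ is the $(m-1)$-skeleton of $X$. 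By cellular approximation $f$ may be deformed so that $f(B)\subseteq B$, which produces $g=f|_{B}\colon B\to B$ together with the induced self-map $s\iota_{m}$ of the cofibre $S^{m}=X/B$; thus the diagram of Proposition \ref{prop:17Prop2Improved} becomes commutative.

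The key step is then to promote $g$ to a self-equivalence of $B$. Attaching a single $m$-cell to the $1$-connected $B$ leaves the homotopy groups unchanged through degree $m-2$, so $i_{\sharp}\colon\pi_{i}(B)\xrightarrow{\cong}\pi_{i}(X)$ for $i\le m-2$; the homotopy-commutative square $f\circ i\simeq i\circ g$ gives $i_{\sharp}\circ g_{\sharp}=f_{\sharp}\circ i_{\sharp}$. For $i\le n\le m-2$ both $i_{\sharp}$ and $f_{\sharp}$ are isomorphisms, hence so is $g_{\sharp}$, i.e. $g\in\calA_{\sharp}^{n}(B)=\calE(B)$ by the very definition of $n=N\calE(B)$. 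This is exactly where the self-closeness number of $B$ is used: it upgrades ``isomorphism through degree $n$'' into ``equivalence of $B$''. Since $g$ is now a homotopy equivalence, $g_{*}$, and therefore $f_{*}$ on $H_{i}(X)\cong H_{i}(B)$, is an isomorphism for every $i\le m-2$.

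It remains to handle the top cell, and I expect this integrality to be the main obstacle. By Proposition \ref{prop:17Prop2Improved} the commutative diagram forces $g_{\sharp}(a\gamma)=s\,(a\gamma)$ in $\pi_{m-1}(B)$, while $f_{*}$ acts on $H_{m}(X)\cong\Z$ as multiplication by the same integer $s$; I would show $s=\pm1$ using that $\gamma$ generates a direct summand $\Z\subset\pi_{m-1}(B)$. From $a\,g_{\sharp}(\gamma)=s\,a\gamma$ with $a\ne0$ one gets $g_{\sharp}(\gamma)=s\gamma+c$ with $c$ of finite order; reducing modulo the torsion subgroup, the automorphism induced by $g_{\sharp}$ on the free abelian group $\pi_{m-1}(B)/\mathrm{tors}$ sends the primitive class $\overline{\gamma}$ to $s\overline{\gamma}$, and applying its inverse shows $\overline{\gamma}$ is divisible by $s$; primitivity of $\gamma$ then forces $s=\pm1$. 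Hence $f_{*}$ is an isomorphism on $H_{m}(X)$ as well, so $f\in\calE(X)$ by Whitehead, and therefore $N\calE(X)\le N\calE(B)$.
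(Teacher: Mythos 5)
Your argument is correct and follows essentially the same route as the paper's proof: bound $N\calE(B)\le m-2$ via Theorem \ref{thm:NEXleqHdimX}, reduce to $\dim B\le m-1$ and restrict $f$ to $g=f|_B$ by cellular approximation, upgrade $g$ to an element of $\calE(B)$ using that $i_\sharp$ is an isomorphism through degree $m-2$, and then force $s=\pm1$ from Proposition \ref{prop:17Prop2Improved} together with the direct-summand hypothesis on $\gamma$. The only difference is organizational (you run it as one linear argument where the paper first isolates the claim that $f\circ i\simeq i\circ g$ with $g\in\calE(B)$ implies $f\in\calE(B\cup_{a\gamma}e^{m})$), and your $s=\pm1$ step via the inverse of $g_\sharp$ modulo torsion is just a rephrasing of the paper's computation with the homotopy inverse $\overline{g}$.
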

\begin{proof}  
We first show that if a self-map $f: B \cup_{a\gamma} e^{m} \to B \cup_{a\gamma} e^{m}$ satisfies  $f \circ i \simeq i \circ g$ \ for a map $g : B \to B$ such that $g   \in \calE(B)$, then  $f \in \calE(B \cup_{a\gamma} e^{m})$. Consider the following diagram: 
$$\xymatrix{
S^{m-1} \ar@{.>}[d]_{s\iota_{m-1}} \ar[rr]^-{a\gamma} 
&  & B \ar[d]^{ \ g}  \ar[rr]^-{i}
&  &  B \cup_{a\gamma} e^{m} \ar[d]^{f \ } \ar[rr]^-{q} & & S^{m} \ar[d]_{s\iota_{m} \ } \ar[rr]^-{\Sigma a\gamma} 
&  & \Sigma B \ar[d]^{ \ \Sigma g}  \\ 
S^{m-1} \ar[rr]^-{a\gamma}&  &  B   \ar[rr]^-{i}
&  &    B \cup_{a\gamma}  e^{m}\ar[rr]^-{q} & & S^{m} \ar[rr]^-{\Sigma a\gamma} 
&  & \Sigma B 
}$$ 
Since $g \in \calE(B)$,  there exists a map $\overline{g} : B \to B$ such that 
$\overline{g} \circ g  \simeq  1_{B} \simeq  g \circ \overline{g}$.   
There exists an integer $s$ such that $q \circ f \simeq (s \iota_m ) \circ q$.  It follows that 
$g \circ a\gamma \simeq a\gamma \circ s \iota_{m-1}  \simeq  sa \gamma$ 
by Proposition \ref{prop:17Prop2Improved} since  $H_{*}\mbox{-}\dim (B) \leq m-2$.   Then we see 
$$a\gamma \simeq \overline{g} \circ g \circ a\gamma \simeq \overline{g} \circ  s a\gamma \simeq  s a (\overline{g} \circ  \gamma).$$ 
Let us write $\pi_{m-1} (B) \cong \Z \{\gamma\} \oplus G$ (direct sum decomposition) for some subgroup $G$ by assumption. Then we may write $\overline{g} \circ \gamma = t \gamma + x$ for some integer $t$ and some $x \in G$. It follows that 
$$a \gamma = sa (t \gamma + x) = sa t \gamma + sax$$ 
or $a = sat$ in $\Z$ and $sax = 0$. 
It follows that $s= \pm 1$ since $\gamma$ is a generator of a direct summand $\Z$, and hence we see that the induced homomorphism 
$$f^{*} : H^{d}(B \cup_{a\gamma} e^{m})  \to H^{d}(B \cup_{a\gamma} e^{m})$$ 
is an isomorphism  for any $d \geq 0$ by the five lemma. It follows that $f$ is a homotopy equivalence.

Finally we prove that $N\calE (B \cup_{a\gamma} e^{m}) \leq  N\calE (B)$. 
Consider the following homology exact sequence: 
$$\cdots \to  H_{d+1} (B \cup_{a\gamma} e^{m} , B) \xrightarrow{ \ \partial  \ }  H_{d} (B) \xrightarrow{ \ i_{*}  \ } H_{d} (B \cup_{a\gamma} e^{m}) \xrightarrow{ \ j_{*} \ } H_{d} (B \cup_{a\gamma} e^{m} , B) \to \cdots.$$ 
Since  $H_{d+1} (B \cup_{a\gamma} e^{m} , B)  = 0$ for $d+1 \leq m-1$ and $H_{d} (B \cup_{a\gamma} e^{m} , B)  = 0$ for $d  \leq m-1$, the induced homomorphism  $i_{*} : H_{d} (B) \to H_{d} (B \cup_{a\gamma} e^{m})$  is an isomorphism for $d  \leq m-2$ and  $i_{*} : H_{m-1} (B) \to H_{m-1} (B \cup_{a\gamma} e^{m})$ is an epimorphism. It follows that  $i_{\sharp} : \pi_{d} (B) \to \pi_{d} (B \cup_{a\gamma} e^{m})$ is an isomorphism for $d  \leq m-2$ and  $i_{\sharp} : \pi_{m-1} (B) \to \pi_{m-1} (B \cup_{a\gamma} e^{m})$ is an epimorphism by the Whitehead theorem.   

We know $N\calE (B) \leq H_*\mbox{-}\dim (B) \leq m-2$ by Theorem \ref{thm:NEXleqHdimX}(2). Therefore, assume that $N\calE (B) = k$  for some natural number $k \leq m-2$. Suppose that a map $f :B \cup_{a\gamma} e^{m} \to B \cup_{a\gamma} e^{m}$  satisfies $f_{\sharp} : \pi_{d} (B \cup_{a\gamma} e^{m}) \to \pi_{d} (B \cup_{a\gamma} e^{m})$ is an isomorphism for any $1 \leq  d \leq k$.  

 We may assume that $\dim (B) \leq m-1$ by Proposition 4C.1 (p.429) of Hatcher \cite{Hatcher01}, since $H_{*}\mbox{-}\dim (B) \leq m-2$. Therefore,  we have a map $g= f|_B : B \to B$, the restriction of $f: B \cup_{\gamma} e^{m} \to B \cup_{\gamma} e^{m}$, by the cellular approximation theorem. Then we have the following  homotopy commutative diagram: 
$$\xymatrix{
  B \ar[d]_{g} \ar[rr]^-{i}
&  &  B \cup_{a\gamma} e^{m} \ar[d]^{ \  f  }^{}    \\ 
 B   \ar[rr]^-{i}
&  &    B \cup_{a\gamma}  e^{m} 
}$$
which implies the following commutative diagram for any $d \geq 1$: 
$$\xymatrix{
\pi_{d} (B) \ar[d]_{g_{\sharp} \  } \ar[rr]^-{i_{\sharp}}
&  &  \pi_{d} (B \cup_{a\gamma} e^{m}) \ar[d]^{ \ f_{\sharp}  }^{}    \\ 
 \pi_{d} (B)   \ar[rr]^-{i_{\sharp}}
&  &    \pi_{d} (B \cup_{a\gamma}  e^{m}) 
}$$
Then $g_{\sharp} : \pi_{d} (B) \to \pi_{d} (B)$ is an isomorphism for any $1 \leq d \leq k$  and hence $g \in \calE (B)$. 
It follows that $f \in \calE (B  \cup_{a\gamma}  e^{m})$ by the first part of this proof. Hence we have $N\calE (B \cup_{a\gamma} e^{m}) \leq  k = N\calE (B)$.   
\end{proof}

The following result is a reformulation of Theorem 4 of \cite{OdaYama17} making use of $H_{*}\mbox{-}\dim (B)$ in place of $\dim (B)$.

\begin{theorem}   \label{thm:17Thm4Improved}  \  %%\label{thm:pimm1conZ} 
Let $m \geq 3$.  Let $B$ be a $1$-connected CW-complex with   $H_{*}\mbox{-}\dim (B) \leq m-2$. Assume that $H_{m-2} (B)$  is a finitely generated abelian group.  If $\pi_{m-1} (B) \cong \Z$ and $\gamma  :S^{m-1} \to B$ is a generator   of $\pi_{m-1} (B)$ and $a$ is a non-zero integer, 
then $N\calE (B \cup_{a\gamma} e^{m}) = N\calE (B)$. 
\end{theorem}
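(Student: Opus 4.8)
The plan is to combine the inequality already established in Theorem \ref{thm:17Thm3Imporved} with a reverse inequality that exploits the stronger hypothesis $\pi_{m-1}(B) \cong \Z$. Since the present hypotheses contain those of Theorem \ref{thm:17Thm3Imporved} (a generator of $\Z \cong \pi_{m-1}(B)$ is in particular a generator of a direct summand $\Z \subset \pi_{m-1}(B)$), we already have $N\calE(B \cup_{a\gamma} e^{m}) \le N\calE(B)$. It therefore suffices to prove $N\calE(B) \le N\calE(B \cup_{a\gamma} e^{m})$. Write $\ell := N\calE(B \cup_{a\gamma} e^{m})$; by Theorem \ref{thm:17Thm3Imporved} and Theorem \ref{thm:NEXleqHdimX}(2) we have $\ell \le N\calE(B) \le H_*\mbox{-}\dim(B) \le m-2$, a bound I will use repeatedly. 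The goal is to show $\calA_{\sharp}^{\ell}(B) = \calE(B)$, which by Definition \ref{def:selfcloseness} yields $N\calE(B) \le \ell$.

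First I would take an arbitrary $g \in \calA_{\sharp}^{\ell}(B)$ and extend it across the top cell. Since $\pi_{m-1}(B) \cong \Z\{\gamma\}$, the induced endomorphism $g_\sharp$ of $\pi_{m-1}(B)$ is multiplication by some integer $t$, so $g \circ (a\gamma) \simeq at\,\gamma = t\,(a\gamma)$. As $i \circ (a\gamma) \simeq *$ in $B \cup_{a\gamma} e^{m}$, the composite $i \circ g \circ (a\gamma) \simeq t\,(i \circ a\gamma)$ is null-homotopic, so $i \circ g : B \to B \cup_{a\gamma} e^{m}$ extends over $e^{m}$ to a self-map $f$ of $B \cup_{a\gamma} e^{m}$ with $f \circ i \simeq i \circ g$.

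Next I would verify $f \in \calA_{\sharp}^{\ell}(B \cup_{a\gamma} e^{m})$. Exactly as in the proof of Theorem \ref{thm:17Thm3Imporved}, the homology long exact sequence of the pair together with the Whitehead theorem shows that $i_\sharp : \pi_d(B) \to \pi_d(B \cup_{a\gamma} e^{m})$ is an isomorphism for $d \le m-2$. Since $\ell \le m-2$, for each $d \le \ell$ both $i_\sharp$ and $g_\sharp$ are isomorphisms, and the relation $f_\sharp \circ i_\sharp = i_\sharp \circ g_\sharp$ forces $f_\sharp$ to be an isomorphism on $\pi_d(B \cup_{a\gamma} e^{m})$. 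Hence $f \in \calA_{\sharp}^{\ell}(B \cup_{a\gamma} e^{m}) = \calE(B \cup_{a\gamma} e^{m})$, so $f$ is a homotopy equivalence. Then $f_*$ is an isomorphism in every degree; combined with $i_*$ being an isomorphism for $d \le m-2$ and $f_* \circ i_* = i_* \circ g_*$, this makes $g_*$ an isomorphism on $H_d(B)$ for $d \le m-2$, while $H_d(B) = 0$ for $d \ge m-1$ because $H_*\mbox{-}\dim(B) \le m-2$. Thus $g_*$ is an isomorphism in all degrees and, $B$ being $1$-connected, $g \in \calE(B)$ by the Whitehead theorem. This gives $\calA_{\sharp}^{\ell}(B) \subseteq \calE(B)$; the reverse inclusion is automatic, so $\calA_{\sharp}^{\ell}(B) = \calE(B)$ and $N\calE(B) \le \ell$, whence equality.

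The main obstacle is the extension step: one must ensure that the attaching obstruction $i \circ g \circ (a\gamma)$ vanishes. This is precisely where the hypothesis $\pi_{m-1}(B) \cong \Z$ (rather than merely ``$\gamma$ generates a direct summand,'' as in Theorem \ref{thm:17Thm3Imporved}) is indispensable: it forces $g \circ \gamma$ to be an integer multiple of $\gamma$, so that $g \circ (a\gamma)$ is a multiple of the attaching map and dies in the mapping cone. If $\pi_{m-1}(B)$ carried a complementary summand $G$, then $g \circ \gamma$ could acquire a component in $G$ whose image under $i_\sharp$ need not vanish, and the extension $f$ would fail to exist in general; this is exactly the gap that the passage from Theorem \ref{thm:17Thm3Imporved} to the present statement must close.
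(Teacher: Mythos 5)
Your proposal is correct and follows essentially the same route as the paper: both use Theorem \ref{thm:17Thm3Imporved} for one inequality, then take $g$ inducing isomorphisms on $\pi_{\le \ell}(B)$, use $\pi_{m-1}(B)\cong\Z$ to extend $i\circ g$ over the top cell to a self-map $f$ with $f\circ i\simeq i\circ g$, deduce $f\in\calE(B\cup_{a\gamma}e^m)$ from $N\calE(B\cup_{a\gamma}e^m)=\ell$, and pull the equivalence back to $g$ via the isomorphism $i_\sharp$ in degrees $\le m-2$. The only cosmetic difference is that you close the argument through homology and the Whitehead theorem where the paper stays with homotopy groups and invokes Theorem \ref{thm:NEXleqHdimX}(2); both are valid.
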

\begin{proof} We know $N\calE (B \cup_{a\gamma} e^{m}) \leq   N\calE (B)$ by Theorem \ref{thm:17Thm3Imporved}. Therefore, we show that 
\[ N\calE (B \cup_{a\gamma} e^{m}) \geq   N\calE (B) \] in the following argument.  Let  $N\calE (B \cup_{a\gamma} e^{m})=k$ for some integer $k$. We may assume that  $1 \leq k \leq m-2$, since $N\calE (B) \leq  H_{*}\mbox{-}\dim (B) \leq m-2$ by Theorem \ref{thm:NEXleqHdimX}. 

Let $g: B \to B$ be a map such that $g_{\sharp} : \pi_{d} (B) \to \pi_{d} (B)$ is an isomorphism for any $1 \leq  d \leq k$.  
Consider the following diagram: 
$$\xymatrix{
S^{m-1} \ar@{.>}[d]_{s\iota_{m-1}} \ar[rr]^-{a\gamma} 
&  & B \ar[d]^{ \ g}  \ar[rr]^-{i}
&  &  B \cup_{a\gamma} e^{m} \ar@{.>}[d]^{h \ }^{} \ar[rr]^-{q} & & S^{m} \ar@{.>}[d]_{s\iota_{m} \ } \ar[rr]^-{\Sigma a\gamma} 
&  & \Sigma B \ar[d]^{ \ \Sigma g}  \\ 
S^{m-1} \ar[rr]^-{a\gamma}&  &  B   \ar[rr]^-{i}
&  &    B \cup_{a\gamma}  e^{m}\ar[rr]^-{q} & & S^{m} \ar[rr]^-{\Sigma a\gamma} 
&  & \Sigma B 
}$$ 
Since $\pi_{m-1} (B) \cong \Z$,  there exists an integer  $s$ such that 
$g \circ a\gamma \simeq sa \gamma \simeq a\gamma \circ s \iota_{m-1}$. 
Then  we have a self-map $h: B \cup_{a\gamma} e^{m} \to B \cup_{a\gamma} e^{m}$ which makes the above diagram homotopy commutative.   
Consider again the following commutative diagram for any $d \geq 1$: 
$$\xymatrix{
\pi_{d} (B) \ar[d]_{g_{\sharp} \  } \ar[rr]^-{i_{\sharp}}
&  &  \pi_{d} (B \cup_{a\gamma} e^{m}) \ar[d]^{ \ h_{\sharp}  }^{}    \\ 
 \pi_{d} (B)   \ar[rr]^-{i_{\sharp}}
&  &    \pi_{d} (B \cup_{a\gamma}  e^{m}) 
}$$
Since $i_{\sharp} : \pi_{d} (B) \to \pi_{d} (B \cup_{a\gamma} e^{m})$ is an isomorphism  for $d  \leq m-2$ and $g_{\sharp} : \pi_{d} (B) \to \pi_{d} (B)$ is an isomorphism for any $1 \leq  d \leq k$, we see $h_{\sharp} : \pi_{d} (B \cup_{a\gamma}  e^{m}) \to \pi_{d} (B \cup_{a\gamma}  e^{m})$ is an isomorphism for any $1 \leq  d \leq k$. Hence $h \in \calE(B \cup_{a\gamma} e^{m})$ by our assumption $N\calE (B \cup_{a\gamma} e^{m})=k$.  
Therefore $g_{\sharp} : \pi_{d} (B) \to \pi_{d} (B)$ is an isomorphism for any $1 \leq d \leq m-2$  and hence $g \in \calE (B)$  by Theorem \ref{thm:NEXleqHdimX}(2). It follows that  $N\calE (B) \leq  k =  N\calE (B \cup_{a\gamma} e^{m})$.
\end{proof}

\section{Closed manifolds} \label{sec:manifolds} 

A connected compact topological $n$-manifold without boundary is called a {\it closed} $n$-manifold. An oriented closed $n$-manifold enjoys Poincar\'e Duality. Any $1$-connected manifold is orientable (\cite[Proposition 3.25]{Hatcher01}).

\begin{proposition} \label{prop:HstdimB} 
Let $X$ be a $1$-connected closed $n$-manifold. 
Let $x$ be a point of $X$. Then the following results hold\/$:$
\[H_{d} (X-\{x\}) \cong \left\{ 
\begin{array}{@{\,}ll} 
   0 &  \mbox{if} \ \ d \geq n-1      \\ 
H_d (X) &   \mbox{if} \ \ 0 \leq  d \leq n -2   
\end{array} 
\right.\]   

\end{proposition}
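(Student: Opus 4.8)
The plan is to combine the long exact homology sequence of the pair $(X, X-\{x\})$ with the local homology of a manifold and with Poincar\'e duality. Since $X$ is $1$-connected it is orientable, so as a closed oriented $n$-manifold it has a fundamental class generating $H_n(X)\cong\Z$, and $H_d(X)=0$ for $d>n$. First I would record the local homology: excising the complement of a Euclidean chart around $x$ gives $H_d(X,X-\{x\})\cong H_d(\R^n,\R^n-\{0\})$, which is $\Z$ for $d=n$ and $0$ otherwise.

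Feeding these relative groups into
$$\cdots \to H_{d+1}(X,X-\{x\}) \to H_d(X-\{x\}) \xrightarrow{i_*} H_d(X) \to H_d(X,X-\{x\}) \to \cdots,$$
I note that for $0\le d\le n-2$ both relative terms $H_d(X,X-\{x\})$ and $H_{d+1}(X,X-\{x\})$ vanish, so $i_*$ is an isomorphism and $H_d(X-\{x\})\cong H_d(X)$; this is the second line of the statement. For $d\ge n+1$ the neighboring terms $H_{d+1}(X,X-\{x\})$ and $H_d(X)$ both vanish, so $H_d(X-\{x\})=0$. The remaining degrees $d=n-1,n$ are governed by the segment
$$0 \to H_n(X-\{x\}) \xrightarrow{i_*} H_n(X) \xrightarrow{j_*} H_n(X,X-\{x\}) \xrightarrow{\partial} H_{n-1}(X-\{x\}) \to H_{n-1}(X) \to 0,$$
where I have used $H_{n+1}(X,X-\{x\})=0=H_{n-1}(X,X-\{x\})$.

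Here the key input is that $j_*$ is an isomorphism: this is precisely the orientability statement that the fundamental class $[X]$ restricts to a generator of the local homology $H_n(X,X-\{x\})\cong\Z$ (Hatcher \cite{Hatcher01}). Granting this, $i_*$ has image $0$ so $H_n(X-\{x\})=0$, and $\partial$ is the zero map, whence $H_{n-1}(X-\{x\})\cong H_{n-1}(X)$. Finally Poincar\'e duality identifies $H_{n-1}(X)\cong H^1(X)$, and the universal coefficient theorem together with $H_1(X)=0$ (since $X$ is $1$-connected) gives $H^1(X)\cong\mathrm{Hom}(H_1(X),\Z)=0$; hence $H_{n-1}(X-\{x\})=0$, completing the range $d\ge n-1$.

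The hard part will be the surjectivity of $j_*$. Injectivity of $j_*$ comes for free from $H_n(X-\{x\})=0$, which also follows from $X-\{x\}$ being a connected noncompact $n$-manifold; but injectivity alone would permit a nontrivial cokernel of $j_*$ feeding into $H_{n-1}(X-\{x\})$ through $\partial$. It is only surjectivity that forces $\partial=0$, and that is exactly where orientability, through the fundamental-class/local-orientation correspondence, is indispensable rather than any formal diagram chase.
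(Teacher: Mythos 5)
Your proof is correct and follows essentially the same route as the paper's: the long exact sequence of the pair $(X,X-\{x\})$, the local homology computation, the fact that the fundamental class maps isomorphically onto the local homology $H_n(X,X-\{x\})$ (the paper cites Vick's 6.5 Theorem and 6.7 Corollary for exactly this), and Poincar\'e duality together with $H_1(X)=0$ to kill $H_{n-1}(X)$. You simply spell out the degree-by-degree bookkeeping and the role of surjectivity of $j_*$ more explicitly than the paper does.
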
 
\begin{proof} For the sequence 
\[X-\{x\} \xrightarrow{\ i_{x} \ } X \xrightarrow{\ j_{x} \ } (X, X-\{x\}),\]  
we have the long homology exact sequence of the pair $(X, X-\{x\})$. In the exact sequence we see $H_1 (X) = 0$ since $X$ is $1$-connected, and hence we have $H_{n-1} (X)= 0$ by the Poincar\'e duality (see 6.18 Poincar\'e Duality Theorem of Vick \cite{Vick95}). 
We see $H_n (X-\{x\}) = 0$ and the induced homomorphism $j_{x*} : H_n (X)\cong \Z \to H_n (X, X-\{x\})\cong \Z$ is an isomorphism (see 6.5 Theorem and 6.7 Corollary of Vick \cite{Vick95}).  
It follows that 
\[H_{d} (X-\{x\}) \cong \left\{ 
\begin{array}{@{\,}ll} 
   0 &  \mbox{if} \ \ d \geq n-1      \\ 
H_d (X) &   \mbox{if} \ \ 0 \leq  d \leq n -2   
\end{array} 
\right.\] 
\end{proof}

\begin{proposition} \label{prop:XsimeqBcupgamen}
Let $X$ be a  $1$-connected   closed   $n$-manifold.
Then $X \simeq B  \cup_{\alpha} e^{n}$ for some  subcomplex $B$ such that $B \simeq X-\{x\}$ for a point $x$ of $X$ with $\dim (B) < n$ and an attaching map $\alpha : S^{n-1} \to B$.
\end{proposition}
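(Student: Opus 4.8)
The plan is to present $X$ geometrically as the adjunction of a single top-dimensional cell to the complement of a small open ball around $x$, and then to replace that complement by a homotopy-equivalent CW complex of dimension below $n$, the dimension bound coming from Proposition~\ref{prop:HstdimB}.

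First I would exploit the manifold structure at $x$. Choosing a chart that carries an open Euclidean ball homeomorphically onto a neighbourhood $V$ of $x$, with closed ball $\bar V$ and boundary sphere $\partial \bar V \cong S^{n-1}$, I set $B_0 := X - V$. Then $X = B_0 \cup_{\alpha_0} \bar V$, where $\bar V$ is a closed $n$-disc glued to $B_0$ along the inclusion $\alpha_0 : S^{n-1} \cong \partial \bar V \hookrightarrow B_0$ of its boundary sphere; this already exhibits $X$ as $B_0$ with one $n$-cell attached, and $B_0$ is automatically a subcomplex of this structure. Pushing the punctured closed ball radially onto its boundary sphere, while fixing $B_0$, gives a deformation retraction of $X - \{x\}$ onto $B_0$, so that $B_0 \simeq X - \{x\}$.

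Next I would reduce the dimension of $B_0$. For $n \geq 3$ a general-position argument shows $X - \{x\}$, hence $B_0$, is $1$-connected because $X$ is (the case $n \leq 2$ forces $X \simeq S^n$, where the claim is immediate). Being an open manifold, $X - \{x\}$ has the homotopy type of a CW complex, and by Proposition~\ref{prop:HstdimB} its homology vanishes in degrees $\geq n-1$. Hence the hypotheses of Proposition~4C.1 of Hatcher~\cite{Hatcher01} --- already used in the proof of Theorem~\ref{thm:17Thm3Imporved} --- are satisfied, and there is a homotopy equivalence $h : B_0 \xrightarrow{\simeq} B$ with $B$ a CW complex of dimension $\leq n-2 < n$ and $B \simeq X - \{x\}$.

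Finally I would carry the cell attachment across $h$. Since attaching an $n$-cell to the base along a homotopy equivalence produces a homotopy-equivalent adjunction space, one obtains $X = B_0 \cup_{\alpha_0} e^n \simeq B \cup_{\alpha} e^n$ with $\alpha := h \circ \alpha_0 : S^{n-1} \to B$, and $B$ is a subcomplex of the resulting CW structure with a single cell above it. The geometric decomposition and the radial retraction are routine once the chart is fixed, and the dimension reduction is quoted wholesale from the cited results; the step I expect to require the most care is the last one, namely verifying that the homotopy equivalence $h$ genuinely induces a homotopy equivalence $B_0 \cup_{\alpha_0} e^n \simeq B \cup_{h\alpha_0} e^n$ compatible with the cell structure, so that $B$ is realized as an honest subcomplex and $\alpha$ is well defined up to homotopy.
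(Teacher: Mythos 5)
Your argument is correct and follows the paper's proof essentially verbatim: excise an open disc neighbourhood of $x$, note that its complement $B_0$ is a deformation retract of $X-\{x\}$ and that $X$ is $B_0$ with one $n$-cell attached along the boundary sphere, and then invoke Proposition~\ref{prop:HstdimB} together with Proposition~4C.1 of Hatcher to replace $B_0$ by a homotopy-equivalent CW complex of dimension $<n$ (you are merely more explicit than the paper about transporting the attaching map across that equivalence). The one quibble is the claimed bound $\dim(B)\leq n-2$: the minimal cell structure may need $(n-1)$-cells to realize torsion in $H_{n-2}(X-\{x\})$, so the bound one actually gets is $\dim(B)\leq n-1$, which is still $<n$ and is all the statement requires.
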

\begin{proof} We choose a neighborhood ${\rm Int} (D^{n})$ of $x$ so that   
\[ x \in {\rm Int} (D^{n}) = e^{n} \subset X = B  \cup_{\alpha} e^{n} ,\]  
where $B = X \setminus {\rm Int} (D^{n})$  and $\alpha : S^{n-1} \to D^{n} \setminus {\rm Int} (D^{n}) \subset B$ is the inclusion.  Then $B \simeq X-\{x\}$.  Therefore, we have the result by Proposition  \ref{prop:HstdimB} and Proposition 4C.1 (p.429) of Hatcher \cite{Hatcher01}. 
\end{proof}

\begin{remark} \rm 
If $X$ is not $1$-connected, then the following examples are interesting. 
\begin{itemize}
\item[{\rm (1)}] \  Let $X = (S^{1})^{\times n} \ (n \geq 2)$. Then $\pi_{1} (X) \cong H_{1} (X) \cong H^{1} (X) \cong  \Z^{n}$. It follows that $H^{n-1} (X) \cong  H_{1} (X) \cong  \Z^{n}$ and  $H^{1} (X) \cong  H_{n-1} (X) \cong  \Z^{n}$. 
\item[{\rm (2)}] \  The real projective space $\R P^{2k+1} \ (k \geq 1)$ is orientable. Let $X = (\R P^{2k+1})^{\times n}$. Then, $\pi_{1} (X) \cong H_{1} (X) \cong (\Z /2)^{n}$. It follows that $H^{1} (X) \cong  H_{n(2k+1)-1} (X) \cong  0$.
\end{itemize} 
\end{remark}

\begin{lemma} \label{lem:orderOfgamma} 
Let $n \geq 3$. Let $X$ be a  $1$-connected   closed  $n$-manifold.   
If $X=B\cup_{\alpha} e^n$ with  $B_{0}\not\simeq *$, then the  attaching map $\alpha : S^{n-1} \to B$ is of infinite order. 
Therefore, there exists a generator $\gamma$ of a direct summand $\Z \subset \pi_{n-1} (B)$ such that $\alpha = a \gamma$ for some non-zero integer $a$.  
\end{lemma}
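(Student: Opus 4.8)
The plan is to prove the substantive assertion — that $\alpha$ has infinite order in $\pi_{n-1}(B)$ — by playing rational homotopy against Poincar\'e duality, and then to read off the direct-summand statement from the structure theorem for finitely generated abelian groups. The rationalization-versus-duality step is where the real work lies.

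First I would record the standing facts. By Proposition \ref{prop:XsimeqBcupgamen} we may take $B \simeq X-\{x\}$ with $\dim(B)<n$, so Proposition \ref{prop:HstdimB} gives $H_{*}\mbox{-}\dim(B)\le n-2$; in particular $H^{j}(B;\Q)=0$ for $j\ge n-1$. Since $X$ is $1$-connected it is orientable, and being closed it satisfies Poincar\'e duality: $H^{n}(X;\Q)\cong\Q$ and the cup-product pairing $H^{k}(X;\Q)\otimes H^{n-k}(X;\Q)\to H^{n}(X;\Q)$ is non-degenerate for every $k$. Because $B$ has the homotopy type of a $1$-connected finite complex, $\pi_{n-1}(B)$ is a finitely generated abelian group, and $\alpha$ has infinite order precisely when its image in $\pi_{n-1}(B)\otimes\Q\cong\pi_{n-1}(B_{0})$ is non-zero.

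For the main step I argue by contradiction: suppose $\alpha$ has finite order, so $\alpha_{0}\simeq *$. Since rationalization is exact and carries the mapping-cone sequence $S^{n-1}\xrightarrow{\alpha}B\to X$ to the corresponding sequence for $\alpha_{0}$, the vanishing of $\alpha_{0}$ yields $X_{0}\simeq B_{0}\vee S^{n}_{0}$. Passing to rational cohomology, $H^{*}(X;\Q)$ is then the cohomology algebra of a wedge: all cup products between reduced classes of the two summands vanish, and the $S^{n}$-summand contributes a single class, in degree $n$. Now $B_{0}\not\simeq *$ supplies a class $0\neq a\in H^{k}(B;\Q)=H^{k}(X;\Q)$ with $0<k\le n-2$. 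For any $b\in H^{n-k}(X;\Q)=H^{n-k}(B;\Q)$ (the equality holds because $0<n-k<n$) the product $a\cup b$ lies in $H^{n}(B;\Q)=0$, so $a$ pairs trivially with everything in complementary degree, contradicting non-degeneracy of Poincar\'e duality. Hence $\alpha$ has infinite order. \textbf{I expect this step — extracting a duality violation from the wedge splitting of $X_{0}$ — to be the main obstacle}, and it is exactly the place that forces the hypothesis $B_{0}\not\simeq *$.

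Finally I would deduce the direct-summand statement from the structure of $\pi_{n-1}(B)$. Writing $\pi_{n-1}(B)\cong\Z^{r}\oplus T$ with $T$ the finite torsion subgroup, infinite order of $\alpha$ means its free part $\bar\alpha\in\Z^{r}$ is non-zero; taking $a$ to be the greatest common divisor of the coordinates of $\bar\alpha$ gives $\bar\alpha=a\bar\gamma$ with $\bar\gamma$ primitive, and a primitive element of $\Z^{r}$ always generates a free direct summand, which lifts to a generator $\gamma$ of a direct summand $\Z\subset\pi_{n-1}(B)$ with $a\gamma$ and $\alpha$ sharing the same free part. The only point needing care is matching the torsion components so as to obtain the presentation $\alpha=a\gamma$; I would handle this by fixing a splitting of $\pi_{n-1}(B)$ adapted to $\alpha$ before extracting $\gamma$, so that $\gamma$ is chosen inside the rational line through $\alpha$. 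This bookkeeping with the torsion subgroup is routine compared with the duality argument, which carries the essential content of the lemma.
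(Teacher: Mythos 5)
Your main argument coincides with the paper's: assume $\alpha$ has finite order, deduce $X_0\simeq B_0\vee S^n_0$ from the rationalized cofibration, and contradict Poincar\'e duality by pairing a non-zero class in $H^k(X;\Q)=H^k(B;\Q)$ for some $0<k\le n-2$ (which exists because $B_0\not\simeq *$) against complementary degree. You in fact make explicit the point the paper leaves implicit, namely that every such product lands in $H^n(B;\Q)=0$ because of the wedge splitting together with $H_{*}\mbox{-}\dim (B)\le n-2$. Up to phrasing the duality via the nondegenerate cup-product pairing rather than via cap product with the fundamental class, this is the same proof.

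The one place your proposal goes wrong is the final step, which you dismiss as routine bookkeeping. It is \emph{not} true in a general finitely generated abelian group that an element of infinite order is an integer multiple of a generator of a $\Z$-direct summand. In $\Z\oplus\Z/2$ the element $(2,\bar 1)$ has infinite order, but any $\gamma$ generating a $\Z$-summand must have first coordinate $\pm 1$ (its image must generate the free quotient $\Z$), so $a\gamma=(2,\bar1)$ forces $a=\pm2$ and hence torsion coordinate $\overline{2s}=\bar 0\ne\bar 1$. Your proposed fix --- fixing a splitting ``adapted to $\alpha$'' so that $\gamma$ lies on the rational line through $\alpha$ --- runs into exactly this obstruction: the torsion part of $\alpha$ need not be divisible by $a$. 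To be fair, the paper's own proof establishes only the infinite-order claim and passes over the second sentence with a bare ``Therefore,'' so the defect is shared; but your write-up asserts that this step is routine when it is precisely the one assertion of the lemma that does not follow formally from what precedes it, and would need either an additional geometric input or a weakening of the conclusion (e.g.\ replacing $\alpha=a\gamma$ by agreement of free parts modulo torsion).
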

\begin{proof} If $\alpha$ is of finite order, then we see the rationalization $X_0$ of $X$ decomposes as $X_0 \simeq B_0 \vee S^{n}_0$ (see  Proof of Proposition 1.3 (p.55) of \cite{HMR}). 
This contradicts the fact that  there exists an isomorphism 
\[D : H^{p} (X;\Q) \cong H_{n-p} (X;\Q) \] 
given by $D(x) = \mu \cap x$ for any $x \in H^{p} (X;\Q)$ by the Poincar\'e duality, where $\mu \in H_{n} (X;\Q)$ is the fundamental class (see 6.18 Poincar\'e Duality Theorem and 6.30 Proposition  of Vick \cite{Vick95}), since   for any non-zero element $x \in H^p (X;\Q)$, there exists an element  $y \in H^{n-p} (X;\Q)$ such that $\langle \mu \cap x,  y \rangle \neq 0$ and we have a formula: 
\[ 0 \neq \langle \mu \cap x,  y \rangle  =  \langle \mu , x \cup y \rangle    = 0 . \] 
Here, we note that  $x \cup y = 0$ since $X_0 \simeq B_0 \vee S^{n}_0$ and 
\[ \langle \mu, z \rangle  = \varepsilon_* (\mu \cap z) \ \ (\mu \in H_n (X;\Q) , \ \  z \in H^n (X;\Q), \] 
where $\varepsilon_*  : H_0 (X;\Q) \to H_0 (P;\Q) = \Q$ is induced by the map $\varepsilon :  X \to P$, where $P$ is a point.    

\end{proof}

Let $[r]$ be the integer part of a rational number $r$.

\begin{theorem} \label{thm:XneqiSn} 
Let $n \geq 3$ and $1 \leq s \leq [n/2]$ be integers.   If $X$ is $s$-connected  closed $n$-manifold  and  $X_{0} \not\simeq S^{n}_{0}$, then  $N{\mathcal E}(X)\leq n - s -1$.
\end{theorem}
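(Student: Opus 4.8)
The plan is to realize $X$ as a space obtained by attaching a single top cell and then to chain together the two inequalities proved in Section \ref{sec:OdaYamaInprove}. Since $s \geq 1$ the manifold $X$ is $1$-connected, so Proposition \ref{prop:XsimeqBcupgamen} lets me write $X \simeq B \cup_{\alpha} e^{n}$ with $B \simeq X - \{x\}$, $\dim(B) < n$, and an attaching map $\alpha : S^{n-1} \to B$. I will bound $N\calE(X)$ above by $N\calE(B)$ via Theorem \ref{thm:17Thm3Imporved}, and then bound $N\calE(B)$ by $H_{*}\mbox{-}\dim(B)$ via Theorem \ref{thm:NEXleqHdimX}(2); the entire argument thus reduces to computing $H_{*}\mbox{-}\dim(B)$ and verifying the hypotheses of those two theorems.

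First I would determine where the homology of $X$ can be nonzero. The $s$-connectivity gives $H_{i}(X) = 0$ for $1 \leq i \leq s$. Because $X$ is $1$-connected it is orientable, so Poincar\'e duality combined with the universal coefficient theorem shows $H^{i}(X) = 0$, hence $H_{n-i}(X) = 0$, for $1 \leq i \leq s$; that is, $H_{i}(X) = 0$ for $n-s \leq i \leq n-1$ as well. Consequently the reduced homology of $X$ is concentrated in degrees $s+1, \dots, n-s-1$ together with degree $n$. Applying Proposition \ref{prop:HstdimB}, which leaves $H_{d}(B)$ equal to $H_{d}(X)$ for $d \leq n-2$ and kills it for $d \geq n-1$, I conclude that the reduced homology of $B$ lives in degrees $s+1, \dots, n-s-1$, so that $H_{*}\mbox{-}\dim(B) \leq n-s-1 \leq n-2$. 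Moreover, since attaching the top cell does not alter $\pi_{k}$ for $k \leq n-2$ and $s \leq [n/2] \leq n-2$, the space $B$ inherits $s$-connectivity (hence $1$-connectivity) from $X$, and $H_{n-s-1}(B)$ is finitely generated because $X$ is compact. Thus Theorem \ref{thm:NEXleqHdimX}(2) yields $N\calE(B) \leq H_{*}\mbox{-}\dim(B) \leq n-s-1$.

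It remains to put $X$ into the exact form demanded by Theorem \ref{thm:17Thm3Imporved}, and this is where the hypothesis $X_{0} \not\simeq S^{n}_{0}$ is used. I would first show $B_{0} \not\simeq *$: were $B$ rationally trivial, the cofibration $B \to X \to S^{n}$ would force $\widetilde{H}_{i}(X;\Q) = 0$ for $1 \leq i \leq n-1$, making the $1$-connected space $X$ satisfy $X_{0} \simeq S^{n}_{0}$, against hypothesis. Granting $B_{0} \not\simeq *$, Lemma \ref{lem:orderOfgamma} expresses $\alpha = a\gamma$ with $a$ a nonzero integer and $\gamma$ a generator of a direct summand $\Z \subset \pi_{n-1}(B)$. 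Now all hypotheses of Theorem \ref{thm:17Thm3Imporved} are met with $m = n$ --- namely $n \geq 3$, $B$ is $1$-connected, $H_{*}\mbox{-}\dim(B) \leq n-2$, and $H_{n-2}(B)$ is finitely generated --- so $N\calE(X) = N\calE(B \cup_{a\gamma} e^{n}) \leq N\calE(B) \leq n-s-1$, which is the desired inequality.

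I expect the genuine difficulty to lie in the homology computation of the second paragraph: correctly extracting the two vanishing ranges $[1,s]$ and $[n-s,n-1]$ for $X$ from $s$-connectivity and Poincar\'e duality (the universal coefficient bookkeeping at the endpoint $i=1$ needs a moment of care), and then transporting this through the point-deletion shift of Proposition \ref{prop:HstdimB} to read off $H_{*}\mbox{-}\dim(B) \leq n-s-1$. Once that bound is in place, deducing $B_{0} \not\simeq *$ from $X_{0} \not\simeq S^{n}_{0}$ and applying the two inequalities are essentially bookkeeping.
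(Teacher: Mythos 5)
Your proposal is correct and follows essentially the same route as the paper's proof: decompose $X\simeq B\cup_{a\gamma}e^{n}$ via Proposition \ref{prop:XsimeqBcupgamen} and Lemma \ref{lem:orderOfgamma}, bound $H_{*}\mbox{-}\dim(B)\leq n-s-1$ using Proposition \ref{prop:HstdimB} together with Poincar\'e duality, and then chain $N\calE(X)\leq N\calE(B)\leq H_{*}\mbox{-}\dim(B)$ via Theorem \ref{thm:17Thm3Imporved} and Theorem \ref{thm:NEXleqHdimX}(2). You merely spell out in more detail the duality bookkeeping and the deduction of $B_{0}\not\simeq *$ from $X_{0}\not\simeq S^{n}_{0}$, both of which the paper leaves implicit.
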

\begin{proof} We have  $X=B\cup_{a\gamma} e^n$ with  $B_{0}\not\simeq *$,  
where $\gamma : S^{n-1} \to B$ is a generator of the direct summand of $\Z \subset \pi_{n-1} (B)$ and $a$ is a non-zero integer  by Proposition \ref{prop:XsimeqBcupgamen} and Lemma  \ref{lem:orderOfgamma}.
We see  $H_{n-2} (B) = H_{n-2} (X)$ is an abelian group of finite type, since $X$ has a homotopy type of a finite complex by Theorem 1 of Milnor  \cite{Milnor59} or  Corollary A.12.  (p.529) of Hatcher  \cite{Hatcher01} (see also  Application 1 (p.36) of Milnor  \cite{Milnor63} and Theorem 2  (p.384) of Borisovich et al. \cite{BBFI95} for  differentiable manifolds).  Then,  by   Theorem \ref{thm:17Thm3Imporved}, we have $N{\mathcal E}(X)\leq N{\mathcal E}(B)$. 
Since $H_{*}\mbox{-}\dim (B) =  H_{*}\mbox{-}\dim (X-\{x\})$  for a point $x$ of $X$  by Proposition  \ref{prop:XsimeqBcupgamen}, we have $H_{*}\mbox{-}\dim (B) \leq n-s-1$ by Proposition \ref{prop:HstdimB} and the Poincar\'e duality.  
It follows that $N{\mathcal E}(X) \leq n-s-1 $  by Theorem  \ref{thm:17Thm3Imporved}.
\end{proof}

\begin{proposition}  Let $KB$ be the Klein bottle, $T^2 = S^{1} \times S^{1}$ the torus and  $\R P^2$ the real projective plane. Then  
$N\calE(KB) = 1$,   $N\calE(T^2) = 1$ and $N\calE(\R P^2) = 2$. 

\end{proposition}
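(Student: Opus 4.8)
The plan is to treat the three surfaces according to their homotopy type. The torus $T^2$ and the Klein bottle $KB$ are closed aspherical surfaces, so both are Eilenberg--MacLane spaces $K(\pi,1)$ (their universal covers are $\R^2$, hence contractible), with $\pi_1(T^2)\cong \Z^2$ and $\pi_1(KB)$ the Klein-bottle group. For such a space $X$ the only nonvanishing homotopy group is $\pi_1$, so the condition defining $\calA_\sharp^1(X)$ --- that $f_\sharp$ be an isomorphism on $\pi_i$ for $i\le 1$ --- reduces to asking that $f_\sharp\colon \pi_1(X)\to \pi_1(X)$ be an isomorphism. By the Whitehead theorem a self-map of a connected CW complex inducing isomorphisms on all homotopy groups is a homotopy equivalence; since here only $\pi_1$ survives, $f_\sharp$ being a $\pi_1$-isomorphism already forces $f\in\calE(X)$. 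Thus $\calA_\sharp^1(X)=\calE(X)$ and $N\calE(X)\le 1$. On the other hand the constant map lies in $\calA_\sharp^0(X)=[X,X]$ but is not a homotopy equivalence, so $\calA_\sharp^0(X)\neq\calE(X)$ and $N\calE(X)\ge 1$. Hence $N\calE(T^2)=N\calE(KB)=1$.

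The space $\R P^2$ is not aspherical: it has $\pi_1\cong\Z/2$ and universal cover $S^2$, so $\pi_2(\R P^2)\cong\pi_2(S^2)\cong\Z$. The equality $N\calE(\R P^2)=2$ is precisely the case $n=2$ of Theorem \ref{thm:NERPn}, so one clean route is simply to quote that result. For a self-contained argument I would establish the two inequalities directly. For the bound $N\calE(\R P^2)\le 2$, suppose $f\colon \R P^2\to \R P^2$ induces isomorphisms on $\pi_1$ and $\pi_2$. Since $f_\sharp$ is an isomorphism of $\pi_1\cong\Z/2$ (necessarily the identity), $f$ lifts to a map $\tilde f\colon S^2\to S^2$ of universal covers. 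The covering projection $p\colon S^2\to\R P^2$ induces an isomorphism on $\pi_2$ and intertwines $f_\sharp$ with $\tilde f_\sharp$, so $\tilde f_\sharp$ is an isomorphism of $\pi_2(S^2)\cong\Z$; that is, $\deg\tilde f=\pm 1$, whence $\tilde f$ is a homotopy equivalence. A map of connected CW complexes that is a $\pi_1$-isomorphism and whose universal-cover lift is a homotopy equivalence is itself a homotopy equivalence, so $f\in\calE(\R P^2)$.

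For the reverse inequality $N\calE(\R P^2)\ge 2$ I would exhibit a self-map in $\calA_\sharp^1(\R P^2)\setminus\calE(\R P^2)$. Take an antipodally equivariant self-map $\tilde g\colon S^2\to S^2$ of degree $3$ --- such a map exists because the join of the equivariant maps $\mathrm{id}\colon S^0\to S^0$ (degree $1$) and $z\mapsto z^3\colon S^1\to S^1$ (degree $3$) realizes degree $3$ equivariantly on $S^0*S^1=S^2$ --- and let $g\colon\R P^2\to\R P^2$ be the induced quotient map. Then $g_\sharp$ is the identity on $\pi_1\cong\Z/2$, so $g\in\calA_\sharp^1(\R P^2)$, while on $\pi_2\cong\Z$ the map $g_\sharp$ is multiplication by $3$, which is not surjective; in particular $g$ is not a homotopy equivalence. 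Hence $\calA_\sharp^1(\R P^2)\neq\calE(\R P^2)$ and $N\calE(\R P^2)\ge 2$, completing $N\calE(\R P^2)=2$.

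I expect the only genuinely delicate point to be the lower bound for $\R P^2$: one must produce a $\pi_1$-isomorphism that fails to be an equivalence, and this rests on the existence of an odd-degree (and $\neq\pm 1$) antipodal self-map of $S^2$. Everything else --- the asphericity of $T^2$ and $KB$, and the lifting argument for the upper bound --- is routine once the appropriate standard facts are invoked. If one prefers to avoid the equivariant construction entirely, citing Theorem \ref{thm:NERPn} for the $\R P^2$ statement sidesteps this obstacle.
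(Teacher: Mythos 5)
Your proposal is correct and takes essentially the same route as the paper: for $T^2$ and $KB$ one uses asphericity so that a $\pi_1$-isomorphism is already a weak (hence genuine) homotopy equivalence, and for $\R P^2$ one invokes Theorem \ref{thm:NERPn} with $n=2$. Your optional self-contained argument for $\R P^2$ (lifting to $S^2$ for the upper bound, and an antipodally equivariant degree-$3$ map built as a join for the lower bound) is just a repackaging of the suspension construction in the proof of Theorem \ref{thm:NERPn}, so it does not constitute a genuinely different method.
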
 
\begin{proof} Since $\pi_{1} (KB) \cong \langle \alpha, \beta \ | \ \alpha \beta  \alpha \beta^{-1} \rangle$,    $\pi_{2} (KB) \cong 0$ by the fibration $S^1 \to KB \to S^1$  and    $\pi_{1} (T^2) \cong \Z \oplus \Z$, $\pi_{2} (T^2) \cong 0$,  we have  $N\calE(KB) = N\calE(T^2) = 1$.   We see   $\pi_{1} (\R P^2) \cong \Z /2$, $\pi_{2} (\R P^2) \cong \Z$ by the fibration $S^0 \to S^2 \to \R P^2$; however we need geometrical consideration to determine  $N\calE(\R P^2) = 2$  as in Theorem  \ref{thm:NERPn}. 
\end{proof} 

\section{Real projective spaces, lens spaces and a cell complex of Mimura and Toda} \label{sec:RProjAndMT}

\subsection{Self-closeness numbers of real projective spaces}  
We can not apply Theorems 3 and 4 of \cite{OdaYama17} to determine $N\calE(\R P^{n})$, since these theorems assume $1$-connectedness for the spaces. Therefore, in this section we determine $N\calE(\R P^{n})$ by constructing special self-maps of $\R P^{n}$.  

As is written on p.232 of Munkres \cite{Munkres84}, the $m$-sphere $S^m$ is decomposed by the cells 
$$e_{+}^{n}, \ e_{-}^{n} \  \  (n = 0, 1, 2, \cdots , m).$$   
The characteristic map $q_{\pm} : (V^n , S^{n-1}) \to (E_{\pm}^n , S^{n-1})$ of $e_{\pm}^{n}$ is defined by 
$$q_{\pm}(t_1 , t_2, \cdots, t_n) = (t_1 , t_2, \cdots, t_n, \pm \sqrt{1 - t_{1}^2 - \cdots - t_{n}^2}). $$

\begin{theorem} \label{thm:NERPn}  \  
$N\calE(\R  P^{n}) =  n $ for any integer $n \geq 1$.    
\end{theorem}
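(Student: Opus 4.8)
The plan is to prove the two inequalities $N\calE(\R P^n)\le n$ and $N\calE(\R P^n)\ge n$ separately, and in both cases the essential tool is the double cover $p:S^n\to\R P^n$ together with the dictionary between self-maps of $\R P^n$ and equivariant self-maps of $S^n$. The case $n=1$ is immediate, since $\R P^1\cong S^1$ and a self-map of $S^1$ lies in $\calE(S^1)$ if and only if it has degree $\pm1$, if and only if it is an isomorphism on $\pi_1$; so I would assume $n\ge2$, for which $\pi_1(\R P^n)\cong\Z/2$ and $p_\sharp:\pi_i(S^n)\xrightarrow{\cong}\pi_i(\R P^n)$ for every $i\ge2$. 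The starting observation is that any $f:\R P^n\to\R P^n$ lifts through $p$ to a map $\tilde f:S^n\to S^n$ with $p\tilde f=fp$ (possible because $\pi_1(S^n)=0$), and that $f_\sharp$ is an isomorphism on $\pi_1$ if and only if $\tilde f$ can be chosen equivariant for the antipodal map $A$, i.e.\ $\tilde fA=A\tilde f$; if instead $f_\sharp$ is trivial on $\pi_1$, then $\tilde f$ factors through $p$. Under the covering isomorphisms, $f_\sharp$ on $\pi_i(\R P^n)$ for $i\ge2$ is identified with $\tilde f_\sharp$ on $\pi_i(S^n)$, and in particular $f_\sharp$ on $\pi_n(\R P^n)\cong\Z$ is multiplication by $\deg\tilde f$.

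For the upper bound, suppose $f\in\calA_{\sharp}^{n}(\R P^n)$, so $f_\sharp$ is an isomorphism on $\pi_i$ for all $i\le n$. Applying the dictionary to $i=1$ yields a lift $\tilde f$, and applying it to $i=n$ forces $\deg\tilde f=\pm1$, so $\tilde f$ is a homotopy equivalence of $S^n$ and hence $\tilde f_\sharp$ is an isomorphism on every $\pi_i(S^n)$. Transporting this back through the covering isomorphisms, $f_\sharp$ is an isomorphism on $\pi_i(\R P^n)$ for all $i\ge2$, and it is an isomorphism on $\pi_1$ by hypothesis; thus $f$ is a weak homotopy equivalence and therefore lies in $\calE(\R P^n)$ by the Whitehead theorem. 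This gives $\calA_{\sharp}^{n}(\R P^n)=\calE(\R P^n)$, i.e.\ $N\calE(\R P^n)\le n$.

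For the lower bound I must exhibit a self-map $f$ lying in $\calA_{\sharp}^{n-1}(\R P^n)$ but not in $\calE(\R P^n)$. The key point is that $\pi_i(\R P^n)\cong\pi_i(S^n)=0$ for $2\le i\le n-1$, so membership in $\calA_{\sharp}^{n-1}(\R P^n)$ imposes only the single condition that $f_\sharp$ be an isomorphism on $\pi_1$. It therefore suffices to build an equivariant self-map $\tilde f$ of $S^n$ of some odd degree $\neq\pm1$, say $3$: descending it produces $f$ with $f_\sharp=\mathrm{id}$ on $\pi_1$ (hence $f\in\calA_{\sharp}^{n-1}$), while $f_\sharp$ acts as multiplication by $3$ on $\pi_n(\R P^n)\cong\Z$, so $f\notin\calE$. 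Such an equivariant map is produced explicitly from the antipodal cell structure $e_{\pm}^{k}$ of $S^n$ and the characteristic maps $q_{\pm}$ recorded above; concretely one may take the $(n-1)$-fold suspension of the odd self-map $z\mapsto z^3$ of $S^1$, which is antipodal-equivariant and of degree $3$ at each stage. I expect the main obstacle to be precisely this construction: checking that the cellular (suspended) map is genuinely equivariant, so that it descends to a well-defined self-map of $\R P^n$ acting as the identity on $\pi_1$, and computing its degree. By contrast, the upper bound and the reduction of the $\calA_{\sharp}^{n-1}$ condition to $\pi_1$ are comparatively routine once the covering-space dictionary is in place.
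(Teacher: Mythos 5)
Your proposal is correct and, for the substantive half of the theorem (the lower bound $N\calE(\R P^n)\ge n$), it is essentially the paper's own argument: an antipodal-equivariant odd-degree self-map of $S^n$ built by suspending $z\mapsto z^k$ on $S^1$, descended through the double cover to a self-map of $\R P^n$ that is the identity on $\pi_1$, automatically an isomorphism on the vanishing groups $\pi_i$ for $2\le i\le n-1$, but multiplication by $k\neq\pm1$ on $\pi_n$. The only difference is the upper bound: you prove $N\calE(\R P^n)\le n$ directly by lifting an arbitrary $f$ to $\tilde f:S^n\to S^n$ and reading off $\deg\tilde f=\pm1$ from $\pi_n$, whereas the paper simply inherits it from the general inequality $N\calE(X)\le\dim X$ of Choi and Lee; your version is self-contained and equally valid.
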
 
\begin{proof}  
Let $S^1 = \{ (x, y) \ | \  x^2 + y^2 =1 \, \} \subset \R^2$. Let $(\cos 2\pi t, \sin 2\pi t) \in S^1$ for  $0 \leq t <1$.  Let  $k \in \Z$ be any integer. We define $f_k : S^1 \to S^1$ by 
$$f_k (\cos 2\pi t, \sin 2\pi t) = (\cos 2k \pi t, \sin 2k \pi t).$$ 
Then the following diagram is commutative:  
\[\xymatrix{
   S^{1} \ar[d]_{p_{1} \ } \ar[rr]^-{f_{k}} & &    S^{1} \ar[d]^{\  p_{1} }       \\ 
  \R P^{1} \ar[rr]^-{g_{k}} & &     \R P^{1}   
}\]
where $\R P^{1} \cong S^{1}$ and $p_1  (\cos 2\pi t, \sin 2\pi t) = (\cos 4 \pi t, \sin 4 \pi t)$ and $f_k = g_k$.

Let  $n \geq 2$.  By the construction of the suspension space, we have a map $f_{k}^{n} : S^{n} \to S^{n}$ of degree $k$, which induces the map 
$g_{k}^{n} : \R P^{n} \to \R P^{n}$ for any odd integer $k$ such that the following diagram is commutative:  
\[\xymatrix{
   S^{n} \ar[d]_{p_{n} \ } \ar[rr]^-{f_{k}^{n}} & &    S^{n} \ar[d]^{\  p_{n} }       \\ 
  \R P^{n} \ar[rr]^-{g_{k}^{n}} & &     \R P^{n}   
}\] 
It follows that 
$g_{k\sharp}^{n} (p_n \circ s \iota_{n}) = p_n \circ ks \iota_{n}$.  
We note that the following diagram is homotopy commutative  if $k$ is odd ($n \geq 2)$: 
\[\xymatrix{
   S^{1} \ar[d]_-{i \ } \ar[rr]^-{1_{S^{1}}} & &    S^{1} \ar[d]^-{\  i }       \\ 
  \R P^{2}  \ar[d]_-{i \  }  \ar[rr]^-{g_{k}^{2}} & &     \R P^{2}  \ar[d]^-{\  i }  \\ 
  \R P^{n} \ar[rr]^-{g_{k}^{n}} & &     \R P^{n}   
}\] 
where $i: S^{1} =  \R P^{1} \to \R P^{2}$ is the inclusion map.  
Therefore,  $g_{k\sharp}^{n} : \pi_{1}(\R P^{n}) \to \pi_{1}(\R P^{n})$ is  the identity if $k$ is odd ($n \geq 2)$. 
If $n \geq 2$ and $k$ is odd, then $g_{k}^{n} : \R P^{n} \to \R P^{n}$ induces an isomorphism  $g_{k\sharp}^{n} : \pi_{s}(\R P^{n}) \to \pi_{s}(\R P^{n})$ for any $s <n$, but $g_{k}^{n}$ is not a homotopy equivalence if $k \neq \pm 1$. 
\end{proof}

\subsection{Self-closeness numbers of lens spaces}  

We refer the reader to Section 8 of Olum \cite{Olum53} for the generalized lens space  $L(m;q_{1}, q_{2}, \cdots ,q_{n})$.

Let  $\C$ be the field of complex numbers and $(z_{0}, z_{1}, \cdots, z_{n}) \in \C^{n+1}$  be any element of complex $(n+1)$-dimensional vector spacce.  We write the complex numbers in the polar form as  
\[z_{0} = r_{0} e^{i\alpha_{0}},  \ \  z_{1} = r_{1} e^{i\alpha_{1}}, \ \cdots, \ \  z_{n} = r_{n} e^{i\alpha_{n}}.\] 
Then, the $(2n+1)$-dimensional sphere 
 $S^{2n+1} \subset \C^{n+1}$  is  defined by the formula 
 \[r_{0}^{2} + r_{1}^{2} + \cdots + r_{n}^{2} = 1 .\]   
  
  Let $m \geqq 2$ and $n \geqq 1$ be  fixed integers and let $q_{1}, q_{2}, \cdots ,q_{n}$ be $n$ integers relatively prime to $m$. 
A homeomorphism  $\gamma : S^{2n+1} \to S^{2n+1}$ is defined by 
\[\gamma (z_{0}, z_{1}, \cdots, z_{n}) =  (z_{0}e^{2\pi i / m}, z_{1}e^{2\pi i q_{1}/ m}, \cdots, z_{n}e^{2\pi i q_{n}/ m})\]  
Let $\Gamma =\{1, \gamma , \gamma^{2}, \cdots, \gamma^{m-1}\}$ be the cyclic group  generated by $\gamma$, where we identify 
\[\gamma = (e^{2\pi i / m}, e^{2\pi i q_{1}/ m}, \cdots, e^{2\pi i q_{n}/ m}).\] 
 Then the lens space $L(m;q_{1}, q_{2}, \cdots ,q_{n})$ is defined by the orbit space 
\[ L(m;q_{1}, q_{2}, \cdots ,q_{n}) =  S^{2n+1} / \Gamma\] 
Let $L(m) = S^{1} / \{e^{i 2\pi   /m} \}$ be the orbit space. Then we may consider the subspaces: 
\[ L(m) \subset  L(m;q_{1})  \subset L(m;q_{1}, q_{2})  \subset  \cdots \subset L(m;q_{1}, q_{2}, \cdots ,q_{n-1}) \subset  L(m;q_{1}, q_{2}, \cdots ,q_{n}) . \] 

Let $p_{n} = p_{n\C}:S^{2n+1} \to  L(m;q_{1}, q_{2}, \cdots ,q_{n})$ be the natural projection. We have a fibration $(n \geq 1)$: 
\[ \Z/m   \xrightarrow{ \ \  i_{n}  \   \  }    S^{2n+1}  \xrightarrow{ \ \  p_{n}  \   \  }  L(m;q_{1}, q_{2}, \cdots ,q_{n}).  \] 
We see \ $\pi_{1} (L(m;q_{1}, q_{2}, \cdots ,q_{n})  ) \cong \Z/m$ \ and 
\[\pi_{2n+1} (L(m;q_{1}, q_{2}, \cdots ,q_{n})  ) \cong \Z \{[p_{n}]\} , \] 
where $[p_{n}]$ is the homotopy class of the map \  $p_{n} :S^{2n+1} \to  L(m;q_{1}, q_{2}, \cdots ,q_{n})$.

\begin{theorem} \label{thm:NELensSpace}
 Let $m \geqq 2$ and $n \geqq 1$.  Then 
$N\calE(L(m;q_{1}, q_{2}, \cdots ,q_{n})) = 2n+1$.  
\end{theorem}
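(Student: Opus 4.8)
The plan is to establish the two inequalities $N\calE(L) \le 2n+1$ and $N\calE(L) \ge 2n+1$ separately, where $L = L(m;q_1, q_2, \cdots, q_n)$. Throughout I would use that the universal covering of $L$ is $p_n : S^{2n+1} \to L$ with deck group $\Gamma \cong \Z/m$, so that $\pi_1(L) \cong \Z/m$, $\pi_i(L) = 0$ for $2 \le i \le 2n$, and $p_{n\sharp} : \pi_i(S^{2n+1}) \xrightarrow{\cong} \pi_i(L)$ for every $i \ge 2$; in particular $p_{n\sharp}$ is an isomorphism on $\pi_{2n+1}(L) \cong \Z\{[p_n]\}$.

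For the upper bound I would take any $f \in \calA_\sharp^{2n+1}(L)$. Because $\pi_1(S^{2n+1}) = 0$, the map $f \circ p_n$ lifts through the covering to $\tilde f : S^{2n+1} \to S^{2n+1}$ with $p_n \circ \tilde f = f \circ p_n$. Passing to $\pi_{2n+1}$ gives $p_{n\sharp}\circ \tilde f_\sharp = f_\sharp \circ p_{n\sharp}$; since $p_{n\sharp}$ is an isomorphism and $f_\sharp$ is one by hypothesis, $\tilde f_\sharp$ is an automorphism of $\pi_{2n+1}(S^{2n+1}) \cong \Z$, i.e. $\deg \tilde f = \pm 1$. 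A self-map of $S^{2n+1}$ of degree $\pm 1$ is a homotopy equivalence, so $\tilde f_\sharp$ is an isomorphism on all $\pi_i(S^{2n+1})$. Transporting along $p_{n\sharp}$ for $i \ge 2$ and using the hypothesis at $i = 1$ shows that $f_\sharp$ is an isomorphism on $\pi_i(L)$ for every $i$, whence $f \in \calE(L)$ by the Whitehead theorem. Thus $\calA_\sharp^{2n+1}(L) = \calE(L)$ and $N\calE(L) \le 2n+1$.

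For the lower bound I would produce a self-map of $L$ lying in $\calA_\sharp^{2n}(L) \setminus \calE(L)$. Fix an integer $c$ with $\gcd(c,m) = 1$ and $c \ne \pm 1$ (for instance $c = m+1$), and define $h_c : S^{2n+1} \to S^{2n+1}$ by raising coordinates to the $c$-th power and renormalizing, $h_c(z_0, \cdots, z_n) = (z_0^c, \cdots, z_n^c)\big/\bigl(\textstyle\sum_j |z_j|^{2c}\bigr)^{1/2}$. Writing $\omega = e^{2\pi i/m}$, one checks directly the equivariance $h_c \circ \gamma = \gamma^c \circ h_c$, so $h_c$ descends to $\bar h_c : L \to L$ with $\bar h_c \circ p_n = p_n \circ h_c$, and the induced map on $\pi_1(L) \cong \Z/m$ is multiplication by $c$. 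As $\gcd(c,m) = 1$ this is an automorphism, and since $\pi_i(L) = 0$ for $2 \le i \le 2n$ it follows that $\bar h_c \in \calA_\sharp^{2n}(L)$. On the other hand, viewing $S^{2n+1}$ as the $(n+1)$-fold join of the coordinate circles realizes $h_c$, up to homotopy, as the join of $n+1$ copies of the degree-$c$ map $z \mapsto z^c$ of $S^1$, so $\deg h_c = c^{n+1}$; the relation $p_n \circ h_c = \bar h_c \circ p_n$ then forces $\bar h_{c\sharp}$ to be multiplication by $c^{n+1} \ne \pm 1$ on $\pi_{2n+1}(L) \cong \Z$. Hence $\bar h_c$ is not a homotopy equivalence, $\calA_\sharp^{2n}(L) \ne \calE(L)$, and $N\calE(L) \ge 2n+1$.

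The lifting argument and the fact that a degree $\pm 1$ self-map of an odd sphere is a homotopy equivalence are routine, so the step I expect to demand the most care is the lower bound: verifying that $h_c$ is well defined and $\Gamma$-equivariant and, above all, pinning down its degree as $c^{n+1}$, either through the join decomposition above or by computing the induced map on $H_{2n+1}(S^{2n+1}) \cong \Z$. Once the degree equals $c^{n+1} \ne \pm 1$ while the $\pi_1$-action is a unit modulo $m$, the desired separation $\calA_\sharp^{2n}(L) \ne \calE(L)$ is immediate.
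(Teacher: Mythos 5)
Your proof is correct, and it reaches the result by a route that differs from the paper's in both halves. For the lower bound the paper descends the map $f_{d}(r_{0}e^{i\alpha_{0}},r_{1}e^{i\alpha_{1}},\dots,r_{n}e^{i\alpha_{n}})=(r_{0}e^{i\alpha_{0}d},r_{1}e^{i\alpha_{1}},\dots,r_{n}e^{i\alpha_{n}})$ with $d=ms+1$: twisting only the first coordinate makes $f_{d}$ commute with $\gamma$ on the nose, so the quotient map induces the \emph{identity} on $\pi_{1}$, and the degree is simply $d$, with no join computation needed. Your $h_{c}$ twists all $n+1$ coordinates, so you must pay for it twice — first by checking equivariance only up to the automorphism $\gamma\mapsto\gamma^{c}$ (hence the extra hypothesis $\gcd(c,m)=1$, automatic in the paper's version), and second by computing $\deg h_{c}=c^{n+1}$ via the join decomposition; both steps are carried out correctly, and the conclusion $c^{n+1}\neq\pm 1$ for $|c|\geq 2$ closes the argument. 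For the upper bound the paper says nothing explicit and implicitly relies on the general inequality $N\calE(X)\leq\dim X$ for $0$-connected finite complexes (Theorem 2 of Choi--Lee, quoted in the introduction), whereas you give a self-contained covering-space argument: lift $f\circ p_{n}$ to $\tilde f$ on $S^{2n+1}$, deduce $\deg\tilde f=\pm 1$ from the hypothesis on $\pi_{2n+1}$, and transport isomorphisms back down through $p_{n\sharp}$. That lifting argument is a genuine gain in transparency — it shows exactly where the number $2n+1$ comes from (it is the first dimension in which $\pi_{*}(L)$ sees the degree of the lift) rather than invoking the dimension bound as a black box.
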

\begin{proof}  For any integer $d$,    we define $f_{d} = f_{d}^{n} : S^{2n+1} \to S^{2n+1}$ by 
\[f_{d} (r_{0} e^{i\alpha_{0}},   r_{1} e^{i\alpha_{1}}, \ \cdots,    r_{n} e^{i\alpha_{n}}) = (r_{0} e^{i  \alpha_{0} d},   r_{1} e^{i\alpha_{1}}, \ \cdots,    r_{n} e^{i\alpha_{n}}) .\]
If $d = ms +1$ for some integer $s$, then we have 
\[ f_{d} ( \gamma (r_{0} e^{i\alpha_{0}},   r_{1} e^{i\alpha_{1}}, \ \cdots,    r_{n} e^{i\alpha_{n}}) ) =  \gamma f_{d} (r_{0} e^{i\alpha_{0}},   r_{1} e^{i\alpha_{1}}, \ \cdots,    r_{n} e^{i\alpha_{n}}) . \] 
Therefore, a map $g_{ms+1} = g_{ms+1}^{n} :L(m;q_{1}, q_{2}, \cdots ,q_{n}) \to L(m;q_{1}, q_{2}, \cdots ,q_{n})$ is induced by the map $f_{ms+1} : S^{2n+1} \to S^{2n+1}$  and we have the following homotopy commutative diagram:  
\[\xymatrix{
   S^{2n+1} \ar[d]_-{p_{n} \ } \ar[rr]^-{f_{sm+1}} & &    S^{2n+1} \ar[d]^-{\  p_{n} }       \\ 
 L(m;q_{1}, q_{2}, \cdots ,q_{n}) \ar[rr]^-{g_{sm+1}} & &    L(m;q_{1}, q_{2}, \cdots ,q_{n}) 
}\]

Now, we consider the case $n=0$, namely $L(m)$: Let $S^1 = \{ e^{i 2\pi  t }  \ | \  t \in  \R \} \subset \C$.
Let $ e^{i 2\pi  t }  \in S^1$ for  $0 \leq t <1$ and  $k \in \Z$  any integer. Then $f_{k}^{0} : S^1 \to S^1$ is defined by $f_{k}^{0} (e^{i 2\pi  t  }) = e^{i 2\pi  t k }$ and the following diagram is commutative:  
$$\xymatrix{
   S^{1} \ar[d]_{p_{0} \ } \ar[rr]^-{f_{sm+1}^{0}} & &    S^{1} \ar[d]^{\  p_{0} }       \\ 
 L(m) \ar[rr]^-{g_{sm+1}^{0}} & &     L(m)  
}$$
where $L(m) \cong S^{1}$ and $p_0  (e^{i 2\pi  t }) = e^{i 2\pi  t m}$ and $f_{k}^{0} = g_{k}^{0}$. 
The following  diagram is homotopy commutative: 
\[\xymatrix{
   S^{1}  = L(m) \ar[d]_-{i \ } \ar[rr]^-{1_{S^{1}}} & &    S^{1}  = L(m)\ar[d]^-{\  i }       \\ 
 L(m;q_{1}) \ar[rr]^-{g_{sm+1}^{1}} \ar[d]_-{\cap \ }& &     L(m;q_{1}) \ar[d]^-{\ \cap   }  \\ 
L(m;q_{1}, q_{2}, \cdots ,q_{n}) \ar[rr]^-{g_{sm+1}^{n}} & &    L(m;q_{1}, q_{2}, \cdots ,q_{n})  
}\] 
where $i: S^{1} = L(m) \to L(m;q_{1})$ is the inclusion map   (see p.144 Example 2.43 Lens spaces \cite{Hatcher01}).  It follows that 
\[g_{sm+1 \sharp}^{n} : \pi_{1}( L(m;q_{1}, q_{2}, \cdots ,q_{n})) \to \pi_{1}( L(m;q_{1}, q_{2}, \cdots ,q_{n}))\] 
is the identity homomorphism. 
Let $L = L(m;q_{1}, q_{2}, \cdots ,q_{n})$.  We have the following commutative diagram between the covering space: 
$$\xymatrix{
\Gamma = \Z/m\Z  \ar[d]_{\times (ms+1) \ } \ar[rr]^-{i} & & S^{2n+1} \ar[d]_{f_{ms+1}^{n} \ } \ar[rr]^-{p_{n}} & &  L \ar@{.>}[d]^{ \ g_{ms+1}^{n}  }   \\ 
\Gamma =  \Z/m\Z   \ar[rr]^-{i} & &    S^{2n+1}     \ar[rr]^-{p_{n}} & &    L  & &     
}$$
$$\xymatrix{
  \pi_{1} (L) \ar[d]_{g_{ms+1\sharp}^{n} \ } \ar[rr]^-{\Delta}_-{\cong} & & \Gamma =  \Z/m\Z \ar[d]^{ \ \times (ms+1)  }  &   \\ 
  \pi_{1} (L)    \ar[rr]^-{\Delta}_-{\cong} & & \Gamma=  \Z/m\Z &        
}$$
where $\Delta (\alpha) =  \alpha \cdot e_{0}$ by (3.2.8) Proposition (p.69) of tom Dieck \cite{tomDieck}.

We see that $g_{ms+1}^{n}$ induces the identity homomorphism for $g_{ms+1\sharp}^{n} : \pi_{1} (L) \cong \Z/m\Z  \to \pi_{1} (L)\cong \Z/m\Z $, \  $g_{ms+1\sharp}^{n} : \pi_{k} (L) \cong 0  \to \pi_{k} (L)\cong 0 \  (2 \leq k \leq 2n)$  and the multiplication by $ms + 1$ for the induced homomorphism $g_{ms+1\sharp}^{n} : \pi_{2n+1} (L) \cong \Z\{[p_{n}]\}   \to \pi_{2n+1} (L)\cong \Z \{[p_{n}]\}$.  Therefore, $N\calE(L(m;q_{1}, q_{2}, \cdots ,q_{n}))=2n+1$. 
\end{proof}

\subsection{A cell complex defined by Mimura and Toda} 
To show that there exists a 4-cell complex which is not $p$-universal, Mimura and Toda \cite[Theorem 4.2]{MimuraToda71} constructed a 4-cell complex $K_q = (S^{3} \vee \C P^{2})\cup_{q\alpha} e^{12}$ for any integer $q$ and the sum of the Whitehead products 
\[\alpha =[[[w, \iota_3],w], \iota_3] + [[\iota_3,h],h] + [[w,h],w] : S^{11}  \to S^{3} \vee \C P^{2} \] 
where $w=[\iota_3,\iota_2]\in \pi_4(S^{3} \vee \C P^{2})$ and $h = h_{5}:S^5\to \C P^2$ is the Hopf map. 
The complex we study in this section is a modification of the complex $K_q$ of Mimura and Toda, that is, we study a related complex $(S^{3} \vee \C P^{2})\cup_{\beta} e^{12}$ with a map 
\[\beta =a[[[w, \iota_3],w], \iota_3] + b[[\iota_3,h],h] + c[[w,h],w]  \colon S^{11}  \to S^{3} \vee \C P^{2}  \]   
for some {\it integers} $a,b,c$. Then, the following result is obtained.

\begin{theorem} \label{thm:MimuraToda}   
 Let $B= S^{3} \vee \C P^{2}$ and $\iota_{3} \colon  S^{3} \subset B$ and $\iota_{2} \colon  S^{2}  \subset  B$ be the homotopy classes of the inclusions and $\omega = [\iota_{3}, \iota_{2}] \colon S^{4} \to B$ the Whitehead product. Let $h \colon S^{5} \to \C P^{2} \subset B$ the composite of the Hopf map and the inclusion.  
Consider the following sum of three Whitehead products\/$:$ 
\[\beta =a[[[w, \iota_3],w], \iota_3] + b[[\iota_3,h],h] + c[[w,h],w]  \colon S^{11}  \to B  \]   
for some {\it integers} $a,b,c$. Then,  $N\calE (B\cup_{\beta} e^{12})=3$ if and only if  at least one of $a,b,c$ is non-zero. 
\end{theorem}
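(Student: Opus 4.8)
The plan is to analyze $X=B\cup_{\beta}e^{12}$ with $B=S^{3}\vee\C P^{2}$ by comparing a self-map of $X$ with its restriction to $B$, so I first record what is needed about $B$. It is $1$-connected with $\pi_{2}(B)\cong\pi_{3}(B)\cong\Z$ generated by $\iota_{2},\iota_{3}$, with $H_{*}\mbox{-}\dim(B)=4$, and its cohomology algebra is generated by classes $x_{2},x_{3},x_{4}$ with $x_{2}^{2}=x_{4}$ and $x_{2}x_{3}=x_{3}^{2}=0$. The first claim I would establish is that $N\calE(B)=3$. For $N\calE(B)\le 3$: if $g_{\sharp}$ is an isomorphism on $\pi_{2}$ and $\pi_{3}$, then $g^{*}$ acts by $\pm1$ on $H^{2}$ and $H^{3}$, whence $g^{*}x_{4}=g^{*}(x_{2}^{2})=(g^{*}x_{2})^{2}=x_{4}$ automatically, so $g^{*}$ is an isomorphism in every degree and $g\in\calE(B)$. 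For $N\calE(B)\ge 3$ the self-map that is the identity on $\C P^{2}$ and degree $0$ on the $S^{3}$-summand lies in $\calA_{\sharp}^{2}(B)\setminus\calE(B)$.

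Next I would prove the lower bound $N\calE(X)\ge 3$ uniformly in $a,b,c$, by exhibiting one element of $\calA_{\sharp}^{2}(X)\setminus\calE(X)$. Let $g\colon B\to B$ be the identity on $\C P^{2}$ and constant on the $S^{3}$-summand. Since every summand of $\beta$ contains at least one factor $\iota_{3}$ and Whitehead products are natural, $g_{\sharp}\beta=0$; hence $g\circ\beta\simeq *$ in $X$, so $g$ extends to $f\colon X\to X$ (with top-cell degree $s=0$). This $f$ is an isomorphism on $\pi_{2}$ but has $f_{*}=0$ on $H_{3}(X)$, so $f\notin\calE(X)$. Thus $N\calE(X)\ge 3$ for every choice of $a,b,c$.

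The substantive direction is $N\calE(X)\le 3$ when $(a,b,c)\neq 0$. Suppose $f_{\sharp}$ is an isomorphism for $i\le 3$. Because the cells of $X$ lie in dimensions $0,2,3,4,12$, cellular approximation yields $g=f|_{B}$ with $f\circ i\simeq i\circ g$, and $i_{\sharp}\colon\pi_{j}(B)\to\pi_{j}(X)$ is an isomorphism for $j\le 10$; hence $g_{\sharp}$ is an isomorphism on $\pi_{2},\pi_{3}$, so $g\in\calE(B)$ by the first paragraph. By Proposition~\ref{prop:17Prop2Improved} (with $m=12$, $H_{*}\mbox{-}\dim(B)=4\le 10$) the degree $s=f_{*}|_{H_{12}(X)}$ satisfies $g_{\sharp}\beta=s\beta$, and once $s=\pm1$ is known the five lemma and the homology Whitehead theorem give $f\in\calE(X)$. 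To force $s=\pm1$ I would pass to the rational homotopy Lie algebra $L=\pi_{*}(\Omega B)\otimes\Q$, which by the wedge decomposition is the free product of the abelian Lie algebras $\langle\iota_{3}\rangle$ and $\langle\iota_{2},h\rangle$ (the rational homotopy of $\C P^{2}$ being concentrated in degrees $2$ and $5$). In the multigrading counting the factors $\iota_{3},\iota_{2},h$, the three summands of $\beta$ occupy the distinct multidegrees $(4,2,0)$, $(1,0,2)$, $(2,2,1)$; each iterated bracket is nonzero, so they are $\Q$-linearly independent and $\beta\neq 0$ precisely when some coefficient is nonzero. A $g\in\calE(B)$ sends $\iota_{2}\mapsto\pm\iota_{2}$, $\iota_{3}\mapsto\pm\iota_{3}$, and $h\mapsto\pm h+(\text{lower})$, the coefficient on $h$ being a unit because $g_{\sharp}$ is an automorphism of the rank-$2$ group $\pi_{5}(B)$; projecting $g_{\sharp}\beta=s\beta$ onto the multidegree of whichever summand is present then yields $s=\pm1$. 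I expect this rational computation to be the main obstacle, since it requires the non-vanishing and independence of the three Mimura--Toda brackets and the precise action of $g_{\sharp}$ on them; this is where I would lean on \cite{MimuraToda71} and the Hilton--Milnor description of $\pi_{*}(B)$.

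Finally, for the converse, if $a=b=c=0$ then $\beta=0$ and $X=B\vee S^{12}$. Here the self-maps that are the identity on $B$ and have arbitrary degree $s$ on $S^{12}$ always exist, and for $s\neq\pm1$ they lie in $\calA_{\sharp}^{11}(X)\setminus\calE(X)$; together with Theorem~\ref{thm:NEXleqHdimX}(2) this gives $N\calE(X)=12\neq 3$. Combining the two directions proves that $N\calE(B\cup_{\beta}e^{12})=3$ if and only if at least one of $a,b,c$ is nonzero.
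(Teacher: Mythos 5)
Your proposal is correct and follows essentially the same route as the paper: restrict $f$ to $g=f|_{B}$, show $g\in\calE(B)$ forces $\lambda,\mu=\pm1$, use $g_{\sharp}\beta=s\beta$ together with the non-vanishing and independence of the three Mimura--Toda brackets to force $s=\pm1$ and conclude by the five lemma, with the degree-zero-on-$S^{3}$ map giving the lower bound and $B\vee S^{12}$ handling $a=b=c=0$. The only differences are cosmetic: the paper extracts the coefficient relations $s=\lambda^{2}\mu^{4}=\lambda^{6}\mu=\lambda^{5}\mu^{2}$ directly from the integral computation of \cite[Section 4]{MimuraToda71} (with $g_{\sharp}(h)\equiv\lambda^{3}h$ mod torsion) rather than via the rational homotopy Lie algebra and its multigrading, and note that $\pi_{5}(B)\cong\Z\{h\}\oplus\Z/2\oplus\Z/2$ has rank one, not two --- a slip that only makes your ``unit coefficient on $h$'' claim easier.
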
 
\begin{proof} 
In Section 4 of \cite{MimuraToda71}, it is proved that 
\[ \pi_{2} (B) \cong \Z \{\iota_{2}\}, \   \pi_{3} (B) \cong \Z \{\iota_{3}\}, \   \pi_{4} (B) \cong \Z \{\omega\} \oplus \Z/2  \   \mbox{and}\   \pi_{5} (B) \cong \Z  \{h\} \oplus \Z/2 \oplus \Z/2 .  \]  
Here, $\Z\{\alpha\}$ means that $\alpha$ is a generator of the free part $\Z$. 

Let $f: B\cup_{\beta} e^{12} \to B\cup_{\beta} e^{12}$ be a map. Let $g= f|_{B} : B \to B$ be the restriction of $f$. 
Consider the following diagram obtained by Proposition 2 of \cite{OdaYama17}: 
$$\xymatrix{
S^{11} \ar@{.>}[d]_{s\iota_{11}} \ar[rr]^-{\beta} 
&  & B \ar[d]^{ \ g}  \ar[rr]^-{i}
&  &  B \cup_{\beta} e^{12} \ar[d]^{f \ } \ar[rr]^-{q} & & S^{12} \ar[d]_{s\iota_{12} \ } \ar[rr]^-{\Sigma \beta} 
&  & \Sigma B \ar[d]^{ \ \Sigma g}  \\ 
S^{11} \ar[rr]^-{\beta}&  &  B   \ar[rr]^-{i}
&  &    B \cup_{\beta}  e^{12}\ar[rr]^-{q} & & S^{12} \ar[rr]^-{\Sigma  \beta} 
&  & \Sigma B 
}$$ 
If $g_{\sharp}(\iota_{2}) = \lambda \iota_{2}$, \   $g_{\sharp}(\iota_{3}) = \mu \iota_{3}$  \  and \ $g_{\sharp}(h) \equiv \kappa h$ modulo 2-torsion for some integers $\lambda, \mu$  and $\kappa$ \ as in \cite[Section 4]{MimuraToda71}, then it is proved  \  $\kappa = \lambda^3$, \  $g_{\sharp}(\omega) = \lambda \mu \omega$ \  and 
\[g_{\sharp}(\beta) = a \lambda^{2}\mu^{4} [[[w, \iota_3],w], \iota_3] + b \lambda^{6}\mu [[\iota_3,h],h] + c \lambda^{5}\mu^{2} [[w,h],w] \]  
by the argument of \cite[Section 4]{MimuraToda71}.  
Moreover, by the above diagram,  we have 
$$g_{\sharp}(\beta) = s \beta = s(a[[[w, \iota_3],w], \iota_3] + b[[\iota_3,h],h] + c[[w,h],w]) .$$  

\noindent  
($\Longrightarrow$) \ 
We note that if $a=b=c=0$, then $N\calE (B\cup_{\beta} e^{12}) = N\calE (B \vee S^{12} )=12$. 
It follows that if $N\calE (B\cup_{\beta} e^{12}) = 3$, then at least one of $a,b,c$ is non-zero.  

\noindent  
($\Longleftarrow$) \ Conversely, assume that at least one of $a,b,c$ is non-zero. 

We see: if $a \neq 0$ then $\lambda^{2}\mu^{4} = s$; if $b \neq 0$ then $\lambda^{6}\mu = s$ and if $c \neq 0$ then $\lambda^{5}\mu^{2} = s$.  

If $\lambda = \pm 1$  and  $\mu = s = 0$, then $f_{\sharp} : \pi_{k} (B\cup_{\beta} e^{12}) \to \pi_{k} (B\cup_{\beta} e^{12})$ is an isomorphism for $k \leq 2$ and $f \not\in \calE (B\cup_{\beta} e^{12})$. Therefore,  $N\calE (B\cup_{\beta} e^{12}) \neq 2$. 

If $a \neq 0$ or  $b \neq 0$ or $c \neq 0$,  then  $\lambda^{2}\mu^{4} = s$ or  $\lambda^{6}\mu = s$ or $\lambda^{5}\mu^{2} = s$.  
Therfore, if $f_{\sharp} : \pi_{k} (B\cup_{\beta} e^{12}) \to \pi_{k} (B\cup_{\beta} e^{12})$ is an isomorphism for $k \leq 3$, then $\lambda = \pm 1$ and $\mu  = \pm 1$ and hence $s  = \pm 1$.  It follows that  
$g_{*} : H_{*} (X;\Z) \to H_{*} (X;\Z)$ is an isomorphism and hence 
$f_{*} : H_{*} (B\cup_{\beta} e^{12};\Z) \to H_{*} (B\cup_{\beta} e^{12};\Z)$ is an isomorphism by the homology exact sequence obtained by the diagram above and the five lemma. Hence we have $f \in \calE (B\cup_{\beta} e^{12})$  and  $N\calE (B\cup_{\beta} e^{12})=3$. 
\end{proof}

\begin{remark} \rm 
Assume that  all of $a,b,c$ are  non-zero in Theorem \ref{thm:MimuraToda}. Then, only the condition $\lambda \neq 0$ and $\mu \neq 0$ implies $f \in \calE(B\cup_{\beta} e^{12})$ as follows:  by an similar argument as in the proof of Lemma 4.1 of \cite{MimuraToda71}, we have 
$$s = \lambda^{2}\mu^{4} = \lambda^{6}\mu = \lambda^{5}\mu^{2} .$$ 
If $\lambda \neq 0$ and $\mu \neq 0$, then the relation $\lambda^{6}\mu = \lambda^{5}\mu^{2}$ implies $\lambda=\mu$. Therefore the relation $\lambda^{2}\mu^{4} = \lambda^{6}\mu$ implies $\lambda^{6} = \lambda^{7}$ and hence $\lambda =1$. It follows that $\lambda=\mu=1$ and $s = \lambda^{2}\mu^{4} = 1$ and we have $f \in \calE(B\cup_{\beta} e^{12})$. 
\end{remark}

\section{Rational homotopical properties}  \label{sec:RationalHomotopy} 

In this section, 
we assume that  spaces $X$ are {\it $1$-connected}  CW complexes of finite type.
The space  $X_0$ means  the rationalization of $X$ \cite{HMR}.
Then $\pi_*(X_0)=\pi_*(X)\otimes \Q$ and $H_*(X_0;\Z )=H_*(X;\Q )$. 
We assume familiarity with rational homotopy theory as in the text 
  \cite{FHT}.

Let $M(X)=(\Lambda {V},d)$ be the  Sullivan minimal model of $X$ \cite{S} and let $L(X)= (\LL U,\partial )$ be the Quillen minimal model of $X$  \cite{T}.
Since they are minimal, that is, the differentials are decomposable, 
$dV\subset \Lambda^{>1}V$ and    $\partial U\subset \LL^{>1}U$,  we have  
$V^n\cong {\rm Hom}(\pi_n(X),\Q ) \ \ and\ \ U^n\cong H_{n+1}(X;\Q )$, respectively \cite[p.34]{T}.
Then  
$[X_0,Y_0]\cong [M(Y),M(X)]$, which is the homotopy set in the category of differential graded algebras over $\Q$ and 
 $[X_0,Y_0]\cong [L(X),L(Y)]$, which is the homotopy set in the category of differential graded Lie algebras over $\Q$.

\begin{definition}\label{def:formal} \rm  (\cite{DGMS}) A space  $X$ is said to be {\it formal} if there is a quasi-isomorphism $M(X)\to (H^*(X;\Q ),0)$.
\end{definition}

\begin{example} \label{ex:formal} \rm 
It is known that the following spaces are formal: 
\begin{enumerate} 
\item[(a)]   the $n$-sphere $S^n$ \cite[\S 1]{HS}

\item[(b)]   the Eilenberg-Mac Lane space $K(G,n)$ $(n>1)$, where $G$ is a finitely generated abelian group.  

\item[(c)]   the one-point union of formal spaces \cite[Lemma 1.6]{HS}

\item[(d)]   the product space of formal spaces \cite[Example 2.87]{FOT}

\item[(e)]   symmetric  spaces \cite[p.162]{FHT}

\item[(f)]   compact K${\ddot{\rm a}}$hler manifolds \cite[Main Theorem]{DGMS}

\item[(g)]   $(m-1)$-connected compact $n$-manifolds  with $n\leq 4m-2$ for $m\geq 2$ \cite[Proposition 3.10]{FOT}

\item[(h)]   a space $X$ such that the differential of Quillen minimal model is quadratic \cite[II.7.(5)]{T} 

\item[(i)]   a space $X$ such that  $H^*(X;\Q )\cong \Q [x_1,..,x_n]/(f_1,..,f_n)$
where $f_1,..,f_n$ is a regular sequence \cite[II.7.(8)]{T}(\cite[Theorem 5]{Ha}).  
For example, homogeneous spaces $G/H$  with ${\rm rank}\, G={\rm rank}\, H$.\\
Remark that all the spaces  $B\cup e^{n}$ in Theorem 16 of \cite{OdaYama17} are formal. 
\end{enumerate}
\end{example}

\begin{remark}\label{formal-u} \rm  
Notice that for any commutative graded algebra $H$ over $\Q$,
there  \emph{uniquely} exists a minimal DGA $M$ and a quasi-isomorphism $M\to (H,0)$.
Then $M$ is the Sullivan model $M(X)$ of a  formal space $X$ with $H^*(X;\Q )=H$.  
Thus  a formal space having  a $\Q$-cohomology algebra $H$ is uniquely determined up to rational homotopy. 

\end{remark} 

\begin{remark} \rm 
If a map $f:M(X)\to M$ is a quasi-isomorphism for some DGA $M$, namely, $f^*:H^*(M(X))\cong H^*(M)$, then we have $[M(X),M(X)]=[M,M]$ (see \cite[Chapter X]{GM}).
Thus we have  $\mathcal{E}( M(X))=\mathcal{E}( M)$ (see \cite[p.305]{OdaYama18}).
For example, when $X$ is formal, we have $\mathcal{E}( M(X))=\mathcal{E}(H^*(X;\Q),0)$ and 
$N\mathcal{E}(M(X))=N\mathcal{E}(H^*(X;\Q),0)$.

\end{remark}

\begin{theorem}  \label{thm:HisoXYXformalNE}
Let $H^*(X;\Q )\cong H^*(Y;\Q)$ as graded algebras.
If $X$ is formal,  then  $N{\mathcal E}(X_0)\geq N{\mathcal E}(Y_0)$.
\end{theorem}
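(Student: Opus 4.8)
The plan is to exploit the key fact established in the preceding Remark: when $X$ is formal, the minimal model $M(X)$ admits a quasi-isomorphism to $(H^*(X;\Q),0)$, so that $\mathcal{E}(M(X)) = \mathcal{E}(H^*(X;\Q),0)$ and $N\mathcal{E}(M(X)) = N\mathcal{E}(H^*(X;\Q),0)$. Since self-closeness numbers of rationalized spaces are computed in the category of DGAs via $[X_0, Y_0] \cong [M(Y), M(X)]$, I would first translate the desired inequality $N\mathcal{E}(X_0) \geq N\mathcal{E}(Y_0)$ into a statement comparing $N\mathcal{E}(M(X))$ and $N\mathcal{E}(M(Y))$, and then reduce the left-hand side to $N\mathcal{E}(H^*(X;\Q),0)$ using formality of $X$.

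The central observation I would leverage is that the hypothesis $H^*(X;\Q) \cong H^*(Y;\Q)$ as graded algebras gives $(H^*(X;\Q),0) \cong (H^*(Y;\Q),0)$ as DGAs with zero differential. By Remark \ref{formal-u}, the formal space attached to a given cohomology algebra is unique up to rational homotopy, and its model is the minimal model $M$ equipped with a quasi-isomorphism $M \to (H^*(X;\Q),0)$. The strategy is then to build a comparison between self-equivalences of $M(Y)$ and self-equivalences of the formal model of $X$. Concretely, for any self-map $\varphi$ of $M(Y)$ lying in $\mathcal{A}_\sharp^k$, I would push it forward along the canonical quasi-isomorphism $M(Y) \to (H^*(Y;\Q),0) = (H^*(X;\Q),0)$ to obtain an algebra self-map of $(H^*(X;\Q),0)$, and use the formality quasi-isomorphism of $X$ to lift this back to a self-map of $M(X)$ that agrees with $\varphi$ on rational homotopy groups in the relevant degrees. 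The degree correspondence $V^n \cong \mathrm{Hom}(\pi_n(X),\Q)$ ensures that the self-closeness filtration $\mathcal{A}_\sharp^k$ is read off faithfully from the linear part of the DGA map in each degree, so an isomorphism on $\pi_i$ for $i \leq k$ corresponds to an isomorphism on the generating space $V^i$ for $i \leq k$.

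I expect the main obstacle to be showing the correct direction of the inequality, namely that membership in $\calE$ is easier to achieve (or at least no harder) for the formal space $X_0$ than for $Y_0$, which is what forces $N\mathcal{E}(X_0) \geq N\mathcal{E}(Y_0)$ rather than the reverse. The crux is that any algebra automorphism of $(H^*(X;\Q),0) = (H^*(Y;\Q),0)$ that is realized as a $\calA_\sharp^k$-self-equivalence of $M(Y)$ automatically lifts (by formality of $X$, via the uniqueness in Remark \ref{formal-u}) to a self-equivalence of $M(X)$; but an arbitrary $\calA_\sharp^k$-self-map of $M(Y)$ need not descend to an algebra map of cohomology unless one controls its behavior carefully. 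Thus I would argue that whenever $\calA_\sharp^k(X_0) = \calE(X_0)$ holds at some threshold $k$, the same threshold forces $\calA_\sharp^k(Y_0) = \calE(Y_0)$, by transporting a potential $\calA_\sharp^k$-self-equivalence of $Y_0$ that fails to be an equivalence into a corresponding self-map of the formal model $M(X)$ and deriving a contradiction with $\calA_\sharp^k(X_0) = \calE(X_0)$. Making this transport precise — in particular verifying that a non-equivalence of $M(Y)$ maps to a non-equivalence on $H^*(X;\Q)$, which then yields a non-equivalence of $M(X)$ because $X$ is formal — will be the technical heart of the argument, relying essentially on formality to collapse the comparison to the level of the cohomology algebra.
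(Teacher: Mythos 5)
Your overall strategy is the same as the paper's: given $n=N\calE(X_0)$ and a self-map $f$ of $Y_0$ inducing isomorphisms on $\pi_{\leq n}$, transport it to an endomorphism of the cohomology algebra $H^*(Y;\Q)\cong H^*(X;\Q)$, realize that endomorphism on the minimal model of the formal space $X$, invoke $\calA_\sharp^{n}(X_0)=\calE(X_0)$ to conclude the cohomology endomorphism is invertible, and deduce $f\in\calE(Y_0)$. However, the mechanism you describe for the first transport step fails: you propose to ``push $\varphi$ forward along the canonical quasi-isomorphism $M(Y)\to (H^*(Y;\Q),0)$,'' but such a quasi-isomorphism exists precisely when $Y$ is formal, and $Y$ is not assumed formal --- indeed the whole content of the theorem is the asymmetry between the formal $X$ and the arbitrary $Y$. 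The repair is immediate but must be made: any DGA self-map of $M(Y)$ induces an algebra endomorphism $f^*$ of $H^*(M(Y))=H^*(Y;\Q)$ without any formality hypothesis, and it is this induced map (shown to be an isomorphism in degrees $\leq n$ by the rational Whitehead theorem, using a surjective-hence-bijective argument in degree $n$) that one feeds into the formality quasi-isomorphism $M(X)\to (H^*(X;\Q),0)$ to produce the self-map $g$ of $M(X)$ with $g^*=f^*$. Your parenthetical worry that ``an arbitrary $\calA_\sharp^k$-self-map of $M(Y)$ need not descend to an algebra map of cohomology'' is unfounded and suggests this confusion: descent to cohomology is automatic for every DGA map.

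A secondary imprecision: the lifted map $g$ does not ``agree with $\varphi$ on rational homotopy groups'' in any literal sense, since $g$ acts on $\pi_*(X_0)$ and $\varphi$ on $\pi_*(Y_0)$, which are not canonically identified in your setup. What is actually needed, and what the paper proves, is only that $g$ restricts to an isomorphism of the generating space $Z^{\leq n}$ of $M(X)$; this follows from $g^*=f^*$ being an isomorphism on $H^{\leq n}$ together with minimality of $M(X)$ (the converse direction of the rational Whitehead theorem for minimal Sullivan algebras). The paper sidesteps both issues cleanly by using the Halperin--Stasheff bigraded model $(\Lambda Z,d)$ of $H^*(X;\Q)$ (which is $M(X)$ by formality) and presenting $M(Y)$ as the filtered perturbation $(\Lambda Z,d+\sigma)$ on the \emph{same} generators $Z$; you may find that framework worth adopting, as it makes the comparison of the two self-maps transparent. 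With the transport step corrected as above, your argument goes through and coincides with the paper's.
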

\begin{proof}
Let $(\Lambda Z,d)$ be the bigraded  model of   $H^*(X;\Q )$
\cite[p.242]{HS}, in which $d$ is minimal.
Then the (non-minimal)  model of $Y$ is given as the filtered model  $(\Lambda Z,D=d+\sigma )$ for some derivation $\sigma:\Lambda Z\to \Lambda Z$
of degree $+1$ \cite[Theorem 4.4]{HS}.
In particular, $\sigma$ is zero when $Y$ is formal.

Let $n=N{\mathcal E}(X_0)$.
Suppose  that a map $f:Y_0\to Y_0$ induces $\pi_{\leq n}(f):\pi_{\leq n}(Y_0)\cong \pi_{\leq n}(Y_0)$.   
Then we have
\begin{equation} 
f^*:H^{\leq n}(Y;\Q)=H^{\leq n}(\Lambda Z, D )\cong H^{\leq n}(Y;\Q)=H^{\leq n}(\Lambda Z, D ) \tag{$*$}
\end{equation}  
by  the rational Whitehead theorem  ($f^*:H^{n}(Y;\Q) \to  H^{n}(Y;\Q)$ is surjective, and hence bijective).

Let  $g:(\Lambda Z,d )\to (\Lambda Z,d )$ be the map of minimal models
induced by  the DGA-map $$(f^*,0):(H^{*}(Y;\Q),0)=(H^{*}(\Lambda Z, D ),0)\to
(H^{*}(Y;\Q),0)= (H^{*}(\Lambda Z, D ),0).$$
Then $g^*=f^*$ and therefore we have  $g|_{Z^{\leq n}}:Z^{\leq n}\cong Z^{\leq n}$ from $(*)$.
 It is  obtained by  the rational Whitehead's theorem  since $Z^i$ is the dual space of $\pi_i(X_0)$.
 From the assumption,  $g$ is a DGA-isomorphism.
Thus $f^*(=g^*)$ is an isomorphism.
Then $f$ is a homotopy equivalence or $f\in{\mathcal E}(Y_0)$.
\end{proof}

\begin{proposition}\label{ee}  
Let $n \geq 3$ and $B$  a $1$-connected CW-complex with    $H_{*}\mbox{-}\dim (B) \leq n-2$. Let $\gamma  :S^{n-1} \to B$ be a generator   of a direct summand $\Z \subset \pi_{n-1} (B)$. 
Then the following are equivalent\/$:$  
\begin{enumerate} 
\item[{\rm (i)}] \    $N\mathcal{E}( (B\cup_{\gamma} e^n)_0)=n$.
\item[{\rm (ii)}] \   $(B\cup_{\gamma} e^n)_0 \sim (B\vee S^n)_0$. 
\end{enumerate}   
\end{proposition}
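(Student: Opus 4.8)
The plan is to study self-maps of $X_{0}:=(B\cup_{\gamma}e^{n})_{0}$ by means of the rationalized cofibration
\[ S^{n-1}_{0}\xrightarrow{\ \gamma_{0}\ }B_{0}\xrightarrow{\ i\ }X_{0}\xrightarrow{\ q\ }S^{n}_{0}. \]
Since $H_{*}\mbox{-}\dim(B_{0})=H_{*}\mbox{-}\dim(B)\le n-2$, the long exact sequence gives $H^{n-1}(X_{0};\Q)=0$ and $H^{n}(X_{0};\Q)=\Q\{u\}$, where $q^{*}$ carries a generator of $H^{n}(S^{n}_{0};\Q)$ onto $u$. I first record that $N\calE(X_{0})\le n$: if $f_{\sharp}$ is an isomorphism on $\pi_{i}(X_{0})$ for all $i\le n$, then $f_{*}$ is an isomorphism on $H_{i}(X_{0};\Q)$ for $i\le n-1$ and an epimorphism on $H_{n}(X_{0};\Q)=\Q$; as every surjective endomorphism of $\Q$ is bijective, $f_{*}$ is an isomorphism in every degree and $f\in\calE(X_{0})$. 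This is the rational counterpart of Theorem~\ref{thm:NEXleqHdimX}(2), with the finite generation hypothesis replaced by $\mathrm{End}_{\Z}(\Q)=\Q$.

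For \emph{(ii)$\Rightarrow$(i)}, suppose $X_{0}\simeq B_{0}\vee S^{n}_{0}$ and let $c$ be the self-map of $B_{0}\vee S^{n}_{0}$ equal to the identity on $B_{0}$ and constant on $S^{n}_{0}$. The pair $(B_{0}\vee S^{n}_{0},B_{0})$ is $(n-1)$-connected, so $i_{\sharp}\colon\pi_{j}(B_{0})\cong\pi_{j}(B_{0}\vee S^{n}_{0})$ for $j\le n-1$ and $c_{\sharp}$ is the identity there; but $c_{*}=0$ on $H_{n}(\,\cdot\,;\Q)=\Q$, so $c\notin\calE$. Conjugating $c$ by the equivalence produces $f\in\calA_{\sharp}^{\,n-1}(X_{0})\setminus\calE(X_{0})$, hence $N\calE(X_{0})>n-1$, and with the bound above $N\calE(X_{0})=n$.

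For \emph{(i)$\Rightarrow$(ii)}, choose $f\colon X_{0}\to X_{0}$ with $f_{\sharp}$ an isomorphism on $\pi_{\le n-1}$ and $f\notin\calE(X_{0})$. A minimal-model computation using $H^{n}(B_{0};\Q)=H^{2n-1}(B_{0};\Q)=0$ shows $[B_{0},S^{n}_{0}]=0$, so $q\circ f\circ i\simeq\ast$ and therefore $f\circ i\simeq i\circ g$ for some $g\colon B_{0}\to B_{0}$. Since $i_{\sharp}$ is an isomorphism on $\pi_{\le n-2}$, the relation $f\circ i\simeq i\circ g$ makes $g_{\sharp}$ an isomorphism on $\pi_{\le n-2}$; as $N\calE(B_{0})\le H_{*}\mbox{-}\dim(B_{0})\le n-2$ by Theorem~\ref{thm:NEXleqHdimX}(2), we get $g\in\calE(B_{0})$. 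Writing $f^{*}u=s\,u$ with $s\in\Q$ (equivalently $q\circ f\simeq(s\iota_{n})\circ q$ rationally), the rational analogue of Proposition~\ref{prop:17Prop2Improved}, which applies because $H_{*}\mbox{-}\dim(B_{0})\le n-2$, yields $g\circ\gamma_{0}\simeq s\,\gamma_{0}$. If $s\ne0$ then $f^{*}$ is an isomorphism on $H^{\le n-1}(X_{0};\Q)$ (because $g$ is an equivalence) and on $H^{n}$, hence on all of $H^{*}(X_{0};\Q)$, so by the five lemma $f\in\calE(X_{0})$, a contradiction. Therefore $s=0$, so $g\circ\gamma_{0}\simeq\ast$; applying $g^{-1}_{\sharp}$ gives $\gamma_{0}\simeq\ast$. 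Consequently the rational cofiber of $\gamma_{0}$ splits, $X_{0}\simeq B_{0}\vee S^{n}_{0}=(B\vee S^{n})_{0}$, which is (ii). In particular both (i) and (ii) are equivalent to the single condition $\gamma_{0}\simeq\ast$.

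I expect the crux to be the middle of \emph{(i)$\Rightarrow$(ii)}: extracting the genuine restriction $g=f|_{B_{0}}\in\calE(B_{0})$ and the scalar $s$ that simultaneously governs $f^{*}u=s\,u$ and $g\circ\gamma_{0}=s\,\gamma_{0}$. This rests on the vanishing $[B_{0},S^{n}_{0}]=0$ (to lift $f\circ i$ through $i$) and on a faithful rationalization of the ladder diagram of Proposition~\ref{prop:17Prop2Improved}, where the relative rational Hurewicz isomorphism identifies the degree on the top cell with the multiple on the attaching map. Once these are in place the implication $s=0\Rightarrow\gamma_{0}\simeq\ast$ and the splitting of the cofiber are routine, and the easy direction needs only the collapse self-map together with the rational bound $N\calE(X_{0})\le n$.
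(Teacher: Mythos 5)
Your argument is correct in substance and follows the same route that the paper compresses into a citation: the paper disposes of (i)\,$\Rightarrow$\,(ii) by invoking (the rational analogue of) Theorem \ref{thm:17Thm3Imporved} in contrapositive form --- if $\gamma_0\neq 0$ then $N\calE((B\cup_\gamma e^n)_0)\leq N\calE(B_0)\leq n-2$ --- and declares (ii)\,$\Rightarrow$\,(i) obvious; you have simply unpacked both halves, and your observation that the finite-generation hypotheses of Theorems \ref{thm:NEXleqHdimX} and \ref{thm:17Thm3Imporved} are harmless rationally (a surjection of a finite-dimensional $\Q$-vector space onto itself is bijective, given the standing finite-type assumption of Section \ref{sec:RationalHomotopy}) is exactly the point that makes the citation legitimate.

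One step needs a better justification. You infer ``$q\circ f\circ i\simeq\ast$ and \emph{therefore} $f\circ i\simeq i\circ g$'' from the vanishing of $[B_0,S^n_0]$. A cofibration sequence is coexact, not exact, under $[B_0,-]$: the sequence $[B_0,B_0]\to[B_0,X_0]\to[B_0,S^n_0]$ need not be exact at the middle term, so the factorization does not follow formally from $q\circ f\circ i\simeq\ast$. The compression is nevertheless true here, but for a different reason: the pair $(X_0,B_0)$ is $(n-1)$-connected and $H^{k}(B_0;\Q)=0$ for $k\geq n-1$, so all obstructions to deforming $f\circ i$ into $B_0$ vanish (equivalently, one may replace $B$ by a complex of dimension $\leq n-1$ as in the proof of Theorem \ref{thm:17Thm3Imporved} and apply cellular approximation; or, in the Quillen model $L(X)=L(B)\ast\LL(w)$ with $|w|=n-1$, note that any DGL self-map preserves the sub-DGL generated by the generators of degree $\leq n-2$ for degree reasons). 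With that repair the rest of your (i)\,$\Rightarrow$\,(ii) argument --- the scalar $s$ governing both $f^*u=su$ and $g\circ\gamma_0\simeq s\gamma_0$ via the rational Proposition \ref{prop:17Prop2Improved}, the dichotomy $s\neq 0\Rightarrow f\in\calE(X_0)$ versus $s=0\Rightarrow\gamma_0\simeq\ast$ --- and your (ii)\,$\Rightarrow$\,(i) via the collapse map are sound.
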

\begin{proof}
(i) $\Longrightarrow$ (ii) holds by   Theorem \ref{thm:17Thm3Imporved} (cf.  \cite[Theorem 15]{OdaYama17}). \\ (ii) $\Longrightarrow$  (i) is obvious. 
\end{proof}

\begin{remark}\label{subset}  \rm  
In Proposition \ref{ee}, the condition  $\mathcal{E}( B_0)\subsetneq \mathcal{E}( X_0)$ is not equivalent to (ii). For example, 
let $B=S^2\vee S^3$ and $L (B)=(\LL (u_1,u_2),0)$
with $|u_i|=i$.
Let $X=B\cup e^4=S^3\vee \C P^2$ 
 and 
$L (X)=(\LL (u_1,u_2,u_3),\partial )$
with $|u_3|=3$, $\partial (u_1)=\partial (u_2)=0$ and $\partial (u_3)=[u_1,u_1]$.
Then there is a map $f\in \calE (L(X) )$ such that
$f(u_3)=au_3+b[u_1,u_2]$ for any $a\in \Q^*$ and $b\in \Q$.
Thus we have  
$$\Q^*\times \Q^*=\{ (c_1,c_2)\mid c_i\in \Q^*\}\cong \mathcal{E}( B_0)\subsetneq \mathcal{E}( X_0)\cong \{ (c_1,c_2,b)\}= \Q^*\times \Q^*\times \Q$$
where $a=c_1^2$ for $f(u_i)=c_iu_i$ ($i=1,2$) 
but $X_0\not\sim (B\vee S^4)_0$.
\end{remark}

\begin{theorem}\label{formalen}
Let $n \geq 3$ and $B$  a $1$-connected CW-complex with    $H_{*}\mbox{-}\dim (B) \leq n-2$. Let $\gamma  :S^{n-1} \to B$ be a generator   of a direct summand $\Z \subset \pi_{n-1} (B)$.  Let $B$ be a formal space with $2H_{*}\mbox{-}\dim (B)  <n$.
Then 
 $X=B\cup_{\gamma} e^n$ is formal  if and only if $N\mathcal{E}( X_0)=n$.  
\end{theorem}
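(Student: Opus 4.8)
The plan is to deduce the theorem from Proposition \ref{ee} together with the uniqueness of formal spaces (Remark \ref{formal-u}). By Proposition \ref{ee}, the condition $N\mathcal{E}(X_0)=n$ is equivalent to $X_0 \sim (B\vee S^n)_0$, so it suffices to prove that $X=B\cup_\gamma e^n$ is formal if and only if $X_0 \sim (B\vee S^n)_0$. One implication is immediate: if $X_0 \sim (B\vee S^n)_0$, then since $B$ is formal by hypothesis and $S^n$ is formal (Example \ref{ex:formal}(a)), the wedge $B\vee S^n$ is formal (Example \ref{ex:formal}(c)); as formality is a rational homotopy invariant, $X$ is formal.

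For the converse the main work is the purely algebraic computation of $H^*(X;\Q)$, and this is the step where the hypothesis $2H_{*}\mbox{-}\dim(B)<n$ is essential. Put $m=H_{*}\mbox{-}\dim(B)$. From the cohomology exact sequence of the cofibration $S^{n-1}\xrightarrow{\gamma}B\xrightarrow{i}X\xrightarrow{q}S^n$, using $\widetilde H^{n-1}(B;\Q)=\widetilde H^{n}(B;\Q)=0$ (valid since $n-1>m$, which follows from $n>2m$), I would first read off that $i^*$ is a ring isomorphism in degrees below $n$ and that $\widetilde H^{*}(X;\Q)\cong \widetilde H^{*}(B;\Q)\oplus \Q\,x_n$ as graded vector spaces, where $x_n=q^*(\iota_n)$ spans the new summand in degree $n$.

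The point is then to identify the multiplication with that of the wedge. Products $x_n\cdot y$ with $\deg y>0$ lie in degree $>n$, where $H^{*}(X;\Q)=0$, hence vanish; and for classes $\widetilde b_1,\widetilde b_2$ pulled back from positive-degree classes of $B$, their product lies in degree $\le 2m<n$, so by the ring isomorphism below degree $n$ it coincides with the product computed in $H^{*}(B;\Q)$ and cannot be a nonzero multiple of $x_n$. Therefore $x_n$ is a square-zero class with trivial products, and $H^{*}(X;\Q)\cong H^{*}(B;\Q)\oplus\Q\,x_n\cong H^{*}(B\vee S^n;\Q)$ as graded algebras.

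With this isomorphism in hand the converse follows quickly: assuming $X$ formal, both $X$ and $B\vee S^n$ are formal spaces carrying isomorphic rational cohomology algebras, so by Remark \ref{formal-u} they have the same rational homotopy type, i.e. $X_0 \sim (B\vee S^n)_0$, whence $N\mathcal{E}(X_0)=n$ by Proposition \ref{ee}. I expect the main obstacle to be exactly the ring computation above: without the bound $2m<n$ a cup product of two positive-degree classes of $B$ could land in degree $n$ and be forced by the attaching map to equal $\lambda x_n$ with $\lambda\neq 0$, which would alter the algebra and break the identification with $H^{*}(B\vee S^n;\Q)$ — this is the hidden-product phenomenon shown to be unavoidable in Remark \ref{formal}.
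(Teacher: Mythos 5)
Your proposal is correct and follows essentially the same route as the paper: reduce via Proposition \ref{ee} to the condition $X_0 \sim (B\vee S^n)_0$, get formality of $X$ from formality of $B\vee S^n$ in one direction, and in the other direction use $2H_{*}\mbox{-}\dim(B)<n$ to identify $H^*(X;\Q)\cong H^*(B\vee S^n;\Q)$ as graded algebras and conclude by the uniqueness of formal spaces (Remark \ref{formal-u}). The only difference is that you spell out the cup-product computation that the paper compresses into the remark $(H^*(B;\Q)\otimes H^*(B;\Q))^n=0$.
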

\begin{proof}
 The `if part' is obvious by Proposition \ref{ee} and Example \ref{ex:formal}(c).
The `only if part' holds as follows:
Suppose that $N\mathcal{E}( X_0) \neq n$. Then  the condition (ii) of Proposition \ref{ee}   does not hold. 
Then, since $H^*(B\cup_{\gamma} e^n;\Q )\cong H^*(B\vee S^n;\Q )$ as graded algebras from $(H^*(B;\Q ) \otimes H^*(B;\Q ))^{n}=0$ and  $(B\cup e^n)_0\not\sim (B\vee S^n)_0$, in which  $B\vee S^n$ is formal, we see that $X$ is not formal  by Remark \ref{formal-u}. 
\end{proof}

\begin{remark}\label{formal}  \rm  
The condition of Theorem \ref{formalen} is best possible.
For example: Let $B=S^{2}$, $X=\C P^2$ and $n=4$. Then,  $2 H_{*}\mbox{-}\dim (B)  =4=n$ and  $X=\C P^2$ is formal but $N{\mathcal E}(X_0) =N{\mathcal E}((S^2\cup_{[\iota_2,\iota_2]} e^4)_0) =2$.  We note that  $\C P^2_0\not\sim (S^2\vee S^4)_0$. 
\end{remark}

\begin{example}  \label{exa:threecell}  \rm 
Let $B=S^3 \vee S^5$ with $x$ and $y$ the homotopy generators of $\pi_3(S^3)\otimes \Q$ and 
$\pi_5(S^5)\otimes \Q$, respectively.
Let $B\cup_{\alpha}e^{12}$  be a CW complex with 
\begin{center} 
$\alpha =[x,[x,[x,y]]] + [y,[x,y]] : S^{11} \to B$ \ \  (Whitehead product). 
\end{center} 
Then 
\begin{equation}  
 H^*(B\cup_{\alpha}e^{12};\Q )\cong H^*(S^3 \vee S^5\vee S^{12};\Q )  \tag{$*$} 
\end{equation}
as graded algebras. 
However  
$$N\calE ((B\cup_{\alpha}e^{12})_0)=  N\calE (B_0)=5<N\calE ((S^3 \vee S^5\vee S^{12})_0)=12$$
since  $\calE ((B\cup_{\alpha}e^{12})_0)=\calE (L(B)* \LL (w),\partial )=\calE (\LL (u,v,w),\partial )\ (\cong \Q^* )$
with $|u|=2$, $|v|=4$, $|w|=11$, $\partial(u)=\partial (v)=0$ and (\cite[p.84]{T})  
$$\partial (w)=[u,[u,[u,v]]] + [v,[u,v]].$$   
Here $\LL (V)*\LL (U)=\LL (V\oplus U)$ (free product \cite[p.17]{T}).
Indeed,  for any map 
$$f \in [(B\cup_{\alpha}e^{12})_0 , (B\cup_{\alpha}e^{12})_0] = [\LL (u,v,w) , \LL (u,v,w)],$$ 
we may set 
$$f(u)=\lambda u , \ \  f(v)=\mu v  \ \ \mbox{and} \ \    f(\omega)=\kappa  \omega \ \ \  \ (\lambda, \mu , \kappa  \in \Q)$$ 
by the dimensional reason.  If $f_{\sharp} : \pi_{3}((B\cup_{\alpha}e^{12})_0)  \to \pi_{3}((B\cup_{\alpha}e^{12})_0)$ is an isomorphism, then we have $\lambda \neq 0$. We see 
\begin{eqnarray*} f(\partial \omega) &=& f([u,[u,[u,v]]] + [v,[u,v]]) \\ 
 &=& [f(u),[f(u),[f(u),f(v)]]] + [f(v),[f(u),f(v)]]     \\ 
  &=& \lambda^3\mu [u,[u,[u,v]]] + \lambda \mu^2 [v,[u,v]]    \ \ \ \mbox{and} \\ 
\partial (f(\omega)) &=& \partial (\kappa  z) = \kappa \partial (\omega)   = \kappa ([u,[u,[u,v]]] + [v,[u,v]]). 
\end{eqnarray*} 
Since  $f(\partial \omega) = \partial (f(\omega))$, we have  $\kappa = \lambda^3\mu=\lambda \mu^2$.  Therefore, the case $\lambda \neq 0$ and $\mu= \kappa = 0$ can happen, and we conclude that $N\calE ((B\cup_{\alpha}e^{12})_0)=5$.

Finally, we see $ {\mathcal E}(B_0)\cong \Q^*\times \Q^*$  and if $f \in {\mathcal E}((B\cup e^n)_0)$ then  $\lambda \neq 0$ and $\mu \neq 0$, and hence $\lambda^2=\mu$. It follows that 
$${\mathcal E}((B\cup e^n)_0)= \{ (\lambda , \mu , \kappa  ) \in\Q^* \times \Q^*  \times \Q^* \mid \mu=\lambda^2, \ \ \kappa = \lambda^5  \}\cong \Q^*.$$
Notice that  $S^3 \vee S^5\vee S^{12}$ is formal but  $B\cup_{\alpha}e^{12}$ is not by $(*)$ and Remark \ref{formal-u}.

Therefore, if $B=S^3\vee S^5$ and $\alpha =[x,[x,[x,y]]] + [y,[x,y]]$, then the following are equivalent:
\begin{itemize}
\item[{\rm (a)}]   $B\cup_{\alpha}e^{12}$ is formal, 
\item[{\rm (b)}]   $(B\cup_{\alpha}e^{12})_0\simeq (S^3 \vee S^5\vee S^{12})_0$, 
\item[{\rm (c)}]    ${\mathcal E}(B_0)\subsetneq {\mathcal E}((B\cup_{\alpha} e^n)_0)$, 
\item[{\rm (d)}]    ${\mathcal E}((B\cup_{\alpha} e^n)_0)\cong \Q^*\times \Q^*\times \Q^*$ and 
\item[{\rm (e)}]   $N\calE ((B\cup_{\alpha}e^{12})_0)=12$.
\end{itemize}

In general, if $\beta =a[x,[x,[x,y]]]+b[y,[x,y]]$ for $a,b\in \Q$, then we have the following table:  
\begin{center} \small 
\setlength{\extrarowheight}{1ex} 
\hspace*{-2ex}\begin{tabular}{|l||c|c|c|}  
\hline  \hspace*{12ex} {case} & 
{self-equivalences} & 
{formal} &
{$N\calE ((B\cup_{\beta} e^{12})_0)$}   \\[0.4ex]    
\hline 
(i) \  \ $ab\neq 0$ & $\calE ((B\cup_{\beta} e^{12})_0)\subsetneq \calE (B_0)$ & no & $5$ \\[0.4ex] 
\hline
(ii) \ $a\neq 0$, $b=0$ or $a=0$, $b\neq 0$ &  $\calE ((B\cup_{\beta} e^{12})_0)= \calE (B_0)$ & no & $5$ \\[0.4ex] 
\hline 
(iii) \ $a=b=0$ &  $\calE ((B\cup_{\beta} e^{12})_0)\supsetneq \calE (B_0)$ & yes & $12$ \\[0.4ex]  
\hline
\end{tabular} 
\end{center}  \normalsize 

\end{example}

\vspace{1ex}

Next we consider  closed (compact without boundary) manifolds.

\begin{lemma}\label{mfd} Let $X$ be a $1$-connected  closed  $n$-manifold.
Then a map $f:X\to X$ preserve the fundamental class  $\Omega \in H^n(X;\Q )$  up to scalar multiple  $($that is, $f^*(\Omega )=\lambda \Omega$ for   $\lambda \neq 0\in \Q \,)$  if and only if  the rationalized map $f_0$ is a homotopy equivalence, namely  $f_0\in {\mathcal E}(X_0)$.
\end{lemma}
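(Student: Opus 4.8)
The plan is to reduce everything to rational Poincar\'e duality together with the rational Whitehead theorem. Since $X$ is a $1$-connected manifold it is orientable, so with $\Q$-coefficients the cup-product pairing $H^p(X;\Q)\otimes H^{n-p}(X;\Q)\to H^n(X;\Q)\cong\Q$ is non-degenerate; I would fix the generator $\Omega$ and write $x\cup y=\langle x,y\rangle\,\Omega$ for the induced bilinear form $\langle\,,\,\rangle$ into $\Q$. Both implications are then read off this pairing.

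The easy direction ($\Leftarrow$) is immediate: if $f_0\in\calE(X_0)$, then $f^*$ is an isomorphism of $H^*(X;\Q)$, and restricting to the one-dimensional top class $H^n(X;\Q)=\Q\{\Omega\}$ forces $f^*(\Omega)=\lambda\Omega$ with $\lambda\neq 0$.

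For the main direction ($\Rightarrow$), assume $f^*(\Omega)=\lambda\Omega$ with $\lambda\neq 0$. The key computation is that $f^*$ is a ring homomorphism, so for $x\in H^p(X;\Q)$ and $y\in H^{n-p}(X;\Q)$,
$$f^*(x)\cup f^*(y)=f^*(x\cup y)=\lambda\,(x\cup y),$$
that is, $\langle f^*(x),f^*(y)\rangle=\lambda\langle x,y\rangle$. I would use this to prove that $f^*$ is injective in every degree: if some $0\neq x\in H^p(X;\Q)$ satisfied $f^*(x)=0$, then $\lambda\langle x,y\rangle=\langle f^*(x),f^*(y)\rangle=0$ for all $y$, and since $\lambda\neq 0$ this makes $x$ orthogonal to all of $H^{n-p}(X;\Q)$, contradicting non-degeneracy of the pairing. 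Because a closed manifold has the homotopy type of a finite complex, each $H^p(X;\Q)$ is finite-dimensional, so an injective endomorphism is an isomorphism; hence $f^*$ is an isomorphism of $H^*(X;\Q)$ in all degrees.

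Finally I would invoke the rational Whitehead theorem, exactly as in the proof of Theorem \ref{thm:HisoXYXformalNE}: for $1$-connected spaces of finite type a self-map inducing an isomorphism on rational cohomology induces a rational homotopy equivalence, so $f_0\in\calE(X_0)$. The only real obstacle is the $(\Rightarrow)$ direction, and within it the crucial point is deducing degreewise injectivity from non-degeneracy of Poincar\'e duality; everything else (orientability, finite type, and the Whitehead theorem) is available from the hypotheses and from facts already recorded in the excerpt.
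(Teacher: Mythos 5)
Your argument is correct and is essentially the paper's own proof: both directions reduce to the non-degeneracy of the rational Poincar\'e duality pairing, with the only-if direction showing $f^*$ is degreewise injective (the paper picks a specific $b$ with $a\cdot b=\Omega$ and derives $f^*(\Omega)=0$, which is the same computation you phrase via orthogonality) and then concluding by finite-dimensionality and the rational Whitehead theorem. Your write-up merely makes explicit two steps the paper leaves implicit.
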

\begin{proof} 
Since `if part' is obvious, we have to prove `only if part':  For any $a\neq 0\in H^k(X;\Q )$, there is an element $b\in H^{n-k}(X;\Q )$ such that $a\cdot b=\Omega$
by the rational Poincar\'{e} duality \cite[Definition 3.1]{FOT}.
If $f^*(a)=0$, we have $f^*(\Omega )=f^*(a\cdot b)=f^*(a)f^*(b)=0$.
It contradicts the assumption.
Thus $f^*(a)\neq 0$, and hence, $f^*:H^*(X;\Q )\to H^*(X;\Q)$ is isomorphic.
Therefore $f_0:X_0\to X_0$ is a homotopy equivalence. 
\end{proof}

\begin{corollary}\label{vee}
Let $X=\vee_{i=1}^kX_i$ be the one-point union of  $1$-connected   closed  $n$-manifolds $X_i$
with  the fundamental classes  $\Omega_i$.
Then a map $f:X\to X$ 
induces 
$f^*(\Omega_i)=\sum_{j=1}^{k} a_{ij}\Omega_j$  $(a_{ij}\in\Q )$ for $i=1,..,k$ with $\det (a_{ij})\neq 0$ for the $k\times k$-matrix $(a_{ij})$
if and only if    $f_0\in {\mathcal E}(X_0)$.
\end{corollary}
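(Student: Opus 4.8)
The plan is to recognize this statement as the wedge-sum generalization of Lemma~\ref{mfd} and to prove it by the same Poincar\'e-duality pairing argument, upgraded to account for the several summands. First I would record the cohomological bookkeeping. Since each $X_i$ is a $1$-connected closed $n$-manifold, $\tilde H^n(X_i;\Q)\cong\Q$ is generated by its fundamental class $\Omega_i$, and for the wedge one has the additive splitting $\tilde H^*(X;\Q)\cong\bigoplus_{i=1}^k\tilde H^*(X_i;\Q)$ together with the crucial multiplicative fact that cup products of positive-degree classes coming from \emph{different} wedge summands vanish, while products within a single summand are computed in that summand. In particular $\{\Omega_1,\dots,\Omega_k\}$ is a basis of $H^n(X;\Q)$, and the matrix $(a_{ij})$ is precisely the matrix of $f^*|_{H^n(X;\Q)}$ in this basis; hence $\det(a_{ij})\neq0$ is equivalent to $f^*|_{H^n(X;\Q)}$ being an isomorphism. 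The ``if'' direction is then immediate: if $f_0\in\calE(X_0)$ then $f^*$ is an isomorphism on all of $H^*(X;\Q)$, so in particular on $H^n(X;\Q)$, giving $\det(a_{ij})\neq0$.

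For the ``only if'' direction I would assume $f^*|_{H^n(X;\Q)}$ is an isomorphism and show that $f^*:H^k(X;\Q)\to H^k(X;\Q)$ is an isomorphism for every $k$; since these spaces are finite-dimensional it suffices to prove injectivity, and the cases $k=0$ and $k\geq n$ are trivial or handled by the hypothesis. So fix $0<k<n$ and suppose $f^*(a)=0$ for some nonzero $a\in H^k(X;\Q)$. Writing $a=\sum_i a_i$ with $a_i\in\tilde H^k(X_i;\Q)$, choose an index $i_0$ with $a_{i_0}\neq0$. By rational Poincar\'e duality inside the manifold $X_{i_0}$ there is a class $b\in\tilde H^{n-k}(X_{i_0};\Q)$ with $a_{i_0}\cup b=\Omega_{i_0}$. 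Viewing $b$ as a class of $X$ concentrated in the $i_0$-summand, the vanishing of cross-summand products gives $a\cup b=\sum_i a_i\cup b=a_{i_0}\cup b=\Omega_{i_0}$. Applying the ring homomorphism $f^*$ yields $f^*(\Omega_{i_0})=f^*(a)\cup f^*(b)=0$, contradicting the injectivity of $f^*$ on $H^n(X;\Q)$ since $\Omega_{i_0}\neq0$. Hence $f^*$ is injective, therefore bijective, in each degree, so $f^*:H^*(X;\Q)\to H^*(X;\Q)$ is an isomorphism and $f_0\in\calE(X_0)$ by the rational Whitehead theorem.

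The step that needs the most care --- and the only place where the argument genuinely differs from Lemma~\ref{mfd} --- is that $f$ may mix the wedge summands, so one cannot simply apply the single-manifold lemma to each $X_i$ separately. The resolution is to pair the (possibly mixed) class $a$ against a dual class $b$ that is deliberately concentrated in one summand $X_{i_0}$: the vanishing of cross-summand cup products then collapses $a\cup b$ to the single fundamental class $\Omega_{i_0}$, which reduces the whole question to the invertibility of $f^*$ on the top cohomology $H^n(X;\Q)$, exactly the stated hypothesis. I would therefore take care to justify the two multiplicative properties of the wedge cohomology ring used above, as everything else follows the template of Lemma~\ref{mfd}.
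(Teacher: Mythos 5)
Your proof is correct and follows exactly the route the paper intends: the paper states this as an unproved corollary of Lemma~\ref{mfd}, and your argument is the natural extension of that lemma's Poincar\'e-duality pairing, with the one genuinely new point (choosing the dual class $b$ concentrated in a single wedge summand so that cross-summand cup products vanish and $a\cup b=\Omega_{i_0}$) correctly identified and handled.
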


\begin{remark} \rm 
A space $X$  is called a Poincar\'e duality space if $H^{*}(X; \Q)$ satisfies the Poincar\'e duality \cite[p.511]{FHT}. 
The results of Lemma \ref{mfd} and Theorem \ref{thm:Xsymplecticmfd} are generalized to   Poincar\'e duality spaces. See Proof of Lemma \ref{lem:orderOfgamma}. 
\end{remark}

Let  $X$ and $Y$ be  $1$-connected  closed  $n$-manifolds.
Then the connected sum $X\sharp Y$ of $X$ and $Y$ \cite[p. 108]{FOT}  is also an $n$-manifold.

\begin{theorem}  \label{thm:NEXsharpYleqvee}  $N{\mathcal E}((X\sharp Y)_0)\leq N{\mathcal E}((X\vee Y)_0)$ for $1$-connected  closed  $n$-manifolds $X$ and $Y$. 
\end{theorem}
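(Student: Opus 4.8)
The plan is to realise both spaces as cell attachments to the common subcomplex $B=(X-\{x\})\vee(Y-\{y\})$ and to compare their self-maps through the pinch map. First I would write $X\simeq A\cup_{a\gamma}e^{n}$ and $Y\simeq C\cup_{b\delta}e^{n}$ with $A\simeq X-\{x\}$, $C\simeq Y-\{y\}$, using Proposition \ref{prop:XsimeqBcupgamen} and Lemma \ref{lem:orderOfgamma}, so that $a,b\neq 0$ and $\gamma,\delta$ generate $\Z$-summands of $\pi_{n-1}(A),\pi_{n-1}(C)$. Setting $B=A\vee C$, collapsing the separating $(n-1)$-sphere identifies $X\sharp Y\simeq B\cup_{\phi}e^{n}$ with $\phi=a\,i_{A}\gamma+b\,i_{C}\delta$, while $X\vee Y\simeq B\cup_{i_{A}\gamma}e^{n}\cup_{i_{C}\delta}e^{n}$; the collapse itself is the pinch map $p:X\sharp Y\to X\vee Y$, which induces an isomorphism on $\pi_{*}\otimes\Q$ and on $H^{*}(\,\cdot\,;\Q)$ in degrees $\le n-2$ and sends both fundamental classes $\Omega_{X},\Omega_{Y}\in H^{n}(X\vee Y;\Q)$ to the single class $\Omega\in H^{n}(X\sharp Y;\Q)$, so that $\ker p^{*}=\Q(\Omega_{X}-\Omega_{Y})$. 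Since $H_{*}\mbox{-}\dim(B)=H_{*}\mbox{-}\dim(X-\{x\})\le n-2$ by Proposition \ref{prop:HstdimB} and $\phi$ is a non-zero multiple of a generator of a $\Z$-summand of $\pi_{n-1}(B)$, Theorem \ref{thm:17Thm3Imporved} already yields the reduction $N{\mathcal E}((X\sharp Y)_{0})\le N{\mathcal E}(B_{0})$.

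The core step transports a self-map across the pinch. Set $k=N{\mathcal E}((X\vee Y)_{0})$ and take $f\in\mathcal{A}_{\sharp}^{k}((X\sharp Y)_{0})$; I want $f\in\mathcal{E}$. By cellular approximation $f$ restricts to $g:=f|_{B}:B_{0}\to B_{0}$, and the cofibration $B\to X\sharp Y\to S^{n}$ makes $i_{\sharp}$ a rational isomorphism in degrees $\le n-2$, so $g\in\mathcal{A}_{\sharp}^{\min(k,n-2)}(B_{0})$. The plan is to build $g^{\vee}:(X\vee Y)_{0}\to(X\vee Y)_{0}$ with $g^{\vee}\circ p\simeq p\circ f$. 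Granting this, the rational equivalence of $p$ in low degrees forces $g^{\vee}\in\mathcal{A}_{\sharp}^{k}((X\vee Y)_{0})=\mathcal{E}((X\vee Y)_{0})$, so Corollary \ref{vee} gives $\det(a_{ij})\neq 0$ for the matrix of $g^{\vee*}$ on $\langle\Omega_{X},\Omega_{Y}\rangle$. The relation $f^{*}p^{*}=p^{*}g^{\vee*}$ together with $p^{*}(\Omega_{X})=p^{*}(\Omega_{Y})=\Omega$ shows $g^{\vee*}$ preserves the line $\ker p^{*}$, so in the basis adapted to $\Q(\Omega_{X}-\Omega_{Y})$ and its quotient the matrix is triangular with eigenvalues $\mu$ (on $\ker p^{*}$) and $\lambda$ (on the quotient, where $f^{*}(\Omega)=\lambda\Omega$); a direct computation gives $\det(a_{ij})=\lambda\mu$. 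Hence $\lambda\neq 0$, and Lemma \ref{mfd} yields $f\in\mathcal{E}((X\sharp Y)_{0})$, proving $N{\mathcal E}((X\sharp Y)_{0})\le k$.

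The step I expect to be the main obstacle is exactly the construction of $g^{\vee}$, that is, checking that $f$ descends through the pinch. Extending $g=f|_{B}$ over the two top cells of $X\vee Y$ is possible if and only if $g_{\sharp}$ carries each attaching class $[i_{A}\gamma],[i_{C}\delta]$ back into their span $S:=\langle[i_{A}\gamma],[i_{C}\delta]\rangle\subset\pi_{n-1}(B)\otimes\Q$ (equivalently, $f_{\sharp}$ fixes the separating-sphere class modulo $\ker p_{\sharp}$); in the Quillen model $L(B)\ast\LL(w_{X},w_{Y})$ this is the statement that $g_{*}[i_{A}\gamma]$ becomes a boundary after adjoining $w_{X},w_{Y}$. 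This span-preservation is \emph{not} automatic for an arbitrary self-map, and it is the only place where the wedge and the connected sum genuinely interact. I would attack it using that $f$ is a self-map of the \emph{manifold} $X\sharp Y$: the relation $g_{\sharp}(\phi)=\lambda\phi$ from Proposition \ref{prop:17Prop2Improved} says $g^{*}$ rescales by $\lambda$ the rational intersection form recorded by the Whitehead-product components of $\phi$, and when $g\in\mathcal{E}(B_{0})$ (which holds once $k\ge N{\mathcal E}(B_{0})$) one gets $\lambda\neq 0$ directly from $\phi\neq 0$, exactly as in the proof of Theorem \ref{thm:17Thm3Imporved}. Reconciling this with the homotopy-group range (comparing $k$ with $n-2$, where $p_{\sharp}$ ceases to be an isomorphism) is the delicate point, and I would expect to split off the highly-connected case — where Poincaré duality forces $\lambda\neq 0$ from $f_{*}$ being an isomorphism on $H_{\le k}$ and $H_{\ge n-k}$ — to close the argument.
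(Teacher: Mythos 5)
Your skeleton --- pinch map $p\colon X\sharp Y\to X\vee Y$, Corollary \ref{vee} to control the pair of fundamental classes, Lemma \ref{mfd} to conclude --- matches the paper's in spirit, and the final bookkeeping ($\det(a_{ij})=\lambda\mu\neq 0$ forces $\lambda\neq 0$) is sound \emph{conditional on} producing $g^{\vee}$ with $g^{\vee}\circ p\simeq p\circ f$. But that existence is precisely the step you leave open, and it is the entire content of the theorem. Your sketch for it does not close. The appeal to ``$g\in\mathcal{E}(B_0)$, which holds once $k\geq N\mathcal{E}(B_0)$'' is circular: nothing gives $k=N\mathcal{E}((X\vee Y)_0)\geq N\mathcal{E}(B_0)$; on the contrary, applying Theorem \ref{thm:17Thm3Imporved} twice to $X\vee Y=B\cup e^{n}\cup e^{n}$ yields $N\mathcal{E}((X\vee Y)_0)\leq N\mathcal{E}(B_0)$, so your preliminary reduction $N\mathcal{E}((X\sharp Y)_0)\leq N\mathcal{E}(B_0)$ is strictly weaker than the target and cannot be leveraged. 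The closing sentence about splitting off a highly-connected case is a plan, not an argument. So the proposal has a genuine gap at its central step: you never show that $g_{\sharp}$ carries the span $\langle [i_A\gamma],[i_C\delta]\rangle$ into itself, i.e.\ that $f$ descends through the pinch.

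For comparison, the paper does not descend $f$ by a cell-by-cell obstruction argument; it works in Sullivan models. By \cite[Example 3.6]{FOT} the $(n)$-minimal model of $X\sharp Y$ is $(M(X\vee Y)\otimes\Lambda(v),d)$ with $|v|=n-1$ and $d(v)=w_X-w_Y$, where $w_X,w_Y$ represent the fundamental classes. Writing $m=\min\{N\mathcal{E}((X\vee Y)_0),\,n-2\}$ (the case $N\mathcal{E}((X\vee Y)_0)>n-2$ being immediate from Theorem \ref{thm:XneqiSn}), a self-map $f$ inducing isomorphisms on $\pi_{\leq m}$ restricts to an automorphism $\overline{f}^{\,*}$ of $M(X\vee Y)$ and satisfies $f^{*}(v)=av+g$; the single equation $\overline{f}^{\,*}(d(v))=d(f^{*}(v))$, combined with the fact that $w_X$ and $w_Y$ are the only non-exact cocycles of $M(X\vee Y)^{n}$ and with Corollary \ref{vee}, forces $a=b=c\neq 0$ and hence $f\in\mathcal{E}((X\sharp Y)_0)$. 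That computation is exactly the substitute for the span-preservation you could not establish. If you want to keep your topological framing, you must prove directly that $g_{\sharp}(\phi)$ is a nonzero multiple of $\phi$ \emph{and} that this pins down a nondegenerate action on both $\Omega_X$ and $\Omega_Y$ separately, which is where Poincar\'e duality of each summand has to enter; as written, the proposal does not do this.
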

\begin{proof}
 Let $M(X)=(\Lambda V_X,d_X)$ and $M(Y)=(\Lambda V_Y,d_Y)$. 
There is a DGA-map induced by the pinching map $p: X\sharp Y\to X\vee Y$
$$\psi :(M(X\vee Y)\otimes \Lambda (v),d)=(\Lambda (V_X\oplus V_Y\oplus Z\oplus \Q (v)),d)\to M(X\sharp Y)$$
where $d|_{V_X}=d_X$,  $d|_{V_Y}=d_Y$, $|v|=n-1$ and $d(v)=w_X-w_Y$.
Here $w_X$ and $w_Y$ are cocycles representing the fundamental classes of $M(X)$ and $M(Y)$, respectively and 
$Z$ is made of generators inductively requiring the multiplications $x\cdot y=0$ for $x\in H^*(X;\Q )$ and $y\in H^*(Y;\Q )$.
Due to  \cite[Example 3.6]{FOT},  $H^{\leq n}(\psi )$ is an isomorphism, that is, 
\begin{flushright} 
$\psi$ is the $(n)$-minimal model of $X\sharp Y$.  \hspace*{30ex} $(*)$
\end{flushright} 
Suppose that $m:=\min \{ N{\mathcal E}((X\vee Y)_0),n-2\}$. 
Here  $N{\mathcal E}((X\sharp Y)_0)\leq n-2$
by Theorem \ref{thm:XneqiSn}.

 For $f\in [(X\sharp Y)_0,(X\sharp Y)_0]=[ M(X\sharp Y),M(X\sharp Y)   ]$, 
assume that  $$f_{\sharp}: \pi_{\leq m}((X\sharp Y)_0)\cong \pi_{\leq m}((X\sharp Y)_0)
.$$
Then $f^*:(V_X\oplus V_Y\oplus Z\oplus \Q (v))^{\leq m}\cong (V_X\oplus V_Y\oplus Z\oplus \Q (v))^{\leq m}$.
Notice that $f^*(v)=av+g$ for some $a\in \Q-0$ and $g\in \Lambda (V_X\oplus V_Y\oplus Z)$  if $n-2 = m$.
Thus we obtain 
$f^*:(V_X\oplus V_Y\oplus Z)^{\leq m}\cong (V_X\oplus V_Y\oplus Z)^{\leq m}$. 

Let $\overline{f} : (X\vee Y)_0 \to (X\vee Y)_0$ be the map induced by $f :(X\sharp Y)_0  \to (X\sharp Y)_0$ and the pinching map $p: X\sharp Y\to X\vee Y$ as in the following commutative diagram:    
$$\xymatrix{
 (X\sharp Y)_0 \ar[d]_{p_0 \ } \ar[rr]^-{f} & &    (X\sharp Y)_0 \ar[d]^{\  p_0}       \\ 
(X\vee Y)_0 \ar[rr]^-{\overline{f} } & &    (X\vee Y)_0  
}$$
From the assumption, we have $\overline{f}^*:M( X\vee Y)\cong M( X\vee Y)$.
Then $\overline{f}^*(w_X-w_Y)=bw_X-cw_Y$ for some $b,c\in \Q$ where $b\neq 0$ or $c\neq 0$ from Corollary \ref{vee}.  We see: 
\begin{center} 
$\overline{f}^* (d(v)) = \overline{f}^* (w_X-w_Y)=bw_X-cw_Y + \alpha$ \ for an element $\alpha$ with $\dim (\alpha) = n$;  \vspace{1ex}  \\  
$d(\overline{f}^* (v)) = d(av+g) = ad(v) + d(g) = a(w_X-w_Y) + d(g)$.  
\end{center} 
Therefore, by the relation $\overline{f}^* (d(v))=d(\overline{f}^*(v))$,  we have $a=b=c\ (\neq 0)$, since non-exact cocycles of $M(X \vee Y)^n$ are only $w_X$ and $w_Y$ (Here, an element $\alpha$ is said to be {\it exact} if there exists an element $\beta$ such that $d(\beta) = \alpha$).
Therefore we have 
$$f^*:(\Lambda (V_X\oplus V_Y\oplus Z\oplus \Q (v)),d)\cong (\Lambda (V_X\oplus V_Y\oplus Z\oplus \Q (v)),d),$$ 
which induces $H^{\leq n}(f): H^{\leq n}((X\sharp Y)_0)\cong H^{\leq n}((X\sharp Y)_0)$
from the property $(*)$ of $\psi$.
Since $H^{>n}( X\sharp Y;\Q )=0$, we obtain $f \in {\mathcal E}((X\sharp Y)_0)$. 
\end{proof}

\begin{example}  \rm 
\begin{enumerate} 
\item[{\rm (1)}] \ We consider the case where   $H^*(X;\Q )$ and $H^*(Y;\Q )$ are generated by only one element as follows:   Let $M(X)=(\Lambda (x,x'),d_X)$ 
with $|x|$ even, $d_X(x)=0$, $d_X(x')=x^k$ for some $k \geq 2$ and $M(Y)=(\Lambda (y,y'),d_Y)$ with $|y|$ even, $d_Y(y)=0$, $d_Y(y')=y^m$ for some $m \geq 2$.
Here $(k-1)|x|=(m-1)|y|=n$.
Then $M(X\sharp Y)\cong (\Lambda (x,y,u,v),d)$
with $d(x)=d(y)=0$, $d(u)=xy$ and $d(v)=x^{k-1}-y^{m-1}$ by an  argument similar to the proof of Theorem \ref{thm:NEXsharpYleqvee}. 
When $|x|\leq |y|$, we have $N{\mathcal E}((X\sharp Y)_0)=|x|\leq |y|=  N{\mathcal E}((X\vee Y)_0)$
since $f^*(y)\neq 0$ if $f^*(x)\neq 0$ for  any map $f: X \to X$. 
For example, $X = \C P^{2n}$ and $Y = \HH P^n$. 
\item[{\rm (2)}] \  If $X$ is a $1$-connected $n$-manifold and $Y_0\simeq S^{n}_0$ for some  $n  \geq 2$, then we have  $(X\sharp Y)_0\simeq X_0$.
Thus  $N{\mathcal E}((X\sharp Y)_0)= N{\mathcal E}(X_0) \leq n=  N{\mathcal E}((X\vee Y)_0)$. 
For example, $X = \C P^{n}$ and $Y = S^{2n}$.
\end{enumerate} 
\end{example}

A $2n$-manifold $X$ is said to be {\it symplectic} if it possesses a nondegenerate $2$-form $w$ which is closed, that is, $dw=0$ \cite[Definition 4.76]{FOT}. 
The nondegeneracy condition is equivalent to saying that $w^n$ is a true volume  form on $X$ \cite[p.182]{FOT}.
For example,  K\"{a}hler manifolds are symplectic \cite[Example 4.80]{FOT}. 
The reader is referred to  \cite{Ba} and \cite{FOT} for the examples of (non-formal) $1$-connected   symplectic manifolds.
A $2n$-manifold $X$ is {\it cohomologically-symplectic} (or {\it c-symplectic}) if there is a class  $\omega \in H^{2}(X; \Q)$ such that $\omega^{n}$ is a top class for $X$ (Definition on p.263 of Lupton and Oprea \cite{LupOprea95}, Definition 4.76 (p.182) of F\'{e}lix,  Oprea  and    Tanr\'{e} \cite{FOT}).  
A symplectic manifold is a cohomologically-symplectic manifold. 

\begin{theorem} \label{thm:Xsymplecticmfd}
If $X$ is a $1$-connected   c-symplectic manifold,  then  $N{\mathcal E}(X_0)= 2$.
\end{theorem}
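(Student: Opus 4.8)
The plan is to establish the two inequalities $N\calE(X_0)\le 2$ and $N\calE(X_0)\ge 2$ separately, the first being the substantive one. Write $\dim X = 2n$ and fix $\omega\in H^2(X;\Q)$ with $\omega^n$ a generator of the top group $H^{2n}(X;\Q)\cong\Q$. Since $X$ is $1$-connected, its minimal Sullivan model $(\Lambda V,d)$ has $V^1=0$ and $dV\subset\Lambda^{\ge 2}V$, so there are no products or relations in degree $2$ and $H^2(X;\Q)=V^2\cong\mathrm{Hom}(\pi_2(X),\Q)$. Consequently, for a rational self-map $f$ (viewed through $[X_0,X_0]\cong[M(X),M(X)]$), the condition $f_\sharp\colon\pi_2(X_0)\cong\pi_2(X_0)$ is equivalent to $f^*\colon H^2(X;\Q)\cong H^2(X;\Q)$. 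Because $\pi_1(X_0)=0$, a self-map lies in $\calA_{\sharp}^{2}(X_0)$ precisely when $f^*$ is an isomorphism in degree $2$, so proving $N\calE(X_0)\le 2$ amounts to showing that every such $f$ is a rational homotopy equivalence.

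To this end I would reduce to Lemma \ref{mfd}, which guarantees $f\in\calE(X_0)$ as soon as $f^*$ preserves the fundamental class $\Omega=\omega^n$ up to a nonzero scalar; its proof uses only the rational Poincar\'e duality of $H^*(X;\Q)$ and the induced map $f^*$, hence applies verbatim to rational self-maps. So the heart of the matter is to show $f^*(\Omega)\ne 0$. The key observation I would exploit is that $f^*$ is a ring map, so $f^*(\omega^n)=(f^*\omega)^n$, together with the invertibility of $f^*$ in degree $2$. Setting $\eta=(f^*)^{-1}(\omega)\in H^2(X;\Q)$ gives $f^*(\eta^n)=(f^*\eta)^n=\omega^n\ne 0$; since $H^{2n}(X;\Q)\cong\Q$, the map $f^*$ acts there as multiplication by some scalar $\lambda$, and the identity $f^*(\eta^n)=\lambda\,\eta^n\ne 0$ forces $\lambda\ne 0$. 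Hence $f^*(\Omega)=\lambda\Omega$ with $\lambda\ne 0$, and Lemma \ref{mfd} yields $f\in\calE(X_0)$; this gives $\calA_{\sharp}^{2}(X_0)=\calE(X_0)$, i.e. $N\calE(X_0)\le 2$.

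For the reverse inequality, note that $\omega\ne 0$ forces $H^2(X;\Q)\ne 0$ and hence $\pi_2(X_0)\ne 0$. Since $X_0$ is $1$-connected, the defining condition for $\calA_{\sharp}^{1}(X_0)$ (an isomorphism on $\pi_i$ for $i\le 1$) is vacuous, so $\calA_{\sharp}^{1}(X_0)=[X_0,X_0]$, which strictly contains $\calE(X_0)$ (for instance the constant map is not an equivalence). Thus $N\calE(X_0)\ge 2$, and combining with the previous paragraph we conclude $N\calE(X_0)=2$. I expect the only genuinely delicate step to be the nonvanishing $f^*(\Omega)\ne 0$: an arbitrary automorphism of $H^2(X;\Q)$ need not a priori send $\omega$ to a class with nonzero top power, and the inverse-image trick $\eta=(f^*)^{-1}(\omega)$ is exactly what bridges invertibility in degree $2$ with nonvanishing of the induced action on the fundamental class, which is what the c-symplectic hypothesis (one class whose $n$-th power generates the top cohomology) makes available.
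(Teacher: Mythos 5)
Your proof is correct and follows essentially the same route as the paper: reduce to Lemma \ref{mfd} by observing that the fundamental class is $\Omega=[\omega^n]$ for the c-symplectic class $\omega\in H^2(X;\Q)$. You additionally make explicit two points the paper leaves implicit, namely the nonvanishing $f^*(\Omega)\neq 0$ via the inverse-image trick $\eta=(f^*)^{-1}(\omega)$ and the lower bound $N\calE(X_0)\geq 2$; both are sound and genuinely needed for the stated equality.
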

\begin{proof}
When $\dim X=2n$, the fundamental class is given as 
$\Omega =[w^n]$ for an element $w  \in H^{2}(X; \Q)$ since $X$ is c-symplectic.  Then the result follows by Lemma \ref{mfd}. 
\end{proof}

For any elements $x,y,z$ of  an graded Lie algebra $L$, the Lie bracket $[\ ,\ ]:L\otimes L\to L$ satisfies
\begin{enumerate} 
\item[{\rm (i)}] \    antisymmetry: $[x,y]=(-1)^{|x||y|+1}[y,x]$
\item[{\rm (ii)}] \  Jacobi identity: $(-1)^{|x||z|}[x,[y,z]]+ (-1)^{|y||x|}[y,[z,x]]+ (-1)^{|z||y|}[z,[x,y]]=0$ (\cite[p.14]{T}).
\end{enumerate}

\begin{example}\label{Exa:fourCellcpx} \rm (Four cell complex)
Let $B=S^3\vee S^2$ and $\iota_2 : S^2 \to  S^3 \vee  S^2 =B$ and $\iota_3 : S^3 \to  S^3 \vee  S^2 =B$  be the inclusions. Let  $X=B\cup_{\alpha }e^{4}\cup_{\beta}e^6$
for the attaching maps: 
$$\alpha =a [\iota_2,\iota_2] : S^3 \to S^2 \subset  S^3 \vee  S^2 =B,$$
$$\beta =b [[\iota_2,\iota_3 ],\iota_2]+ch_5 : S^5 \to  S^3 \vee  \C P^2 =B\cup_{\alpha }e^{4} \ \ (a=1)$$
with $a,b,c \in \Z$.
Here $h_5:S^5\to \C P^2$ is the Hopf map and  we define  $c=0$ if $a\neq 1$. 
Let $\mathcal{E}_{*}(X)$ be the subgroup of $ \mathcal{E}(X)$ whose elements  induce the identities on homology groups  (\cite{AM}), that is,  
\[\mathcal{E}_{*}(X):=\{ f\in \mathcal{E}(X)\mid \ H_*(f)={\rm id}_{H_*(X)} \}.\]

Then we obtain the following table, where the numbers (1) -- (4) in the table are dicussed after the table:
\begin{center} \small
\setlength{\extrarowheight}{1ex} 
\hspace*{-5ex}\begin{tabular}{|l||c|c|c|c|}  
\hline  \hspace*{10ex}{case} & 
{$\mathcal{E}( X_0)/\mathcal{E}_{*}( X_0)$ } &$X$ & formal&
{$N\calE (X)$}   \\[0.4ex]    
\hline
(i)  \ \ \  $a=b=c=1$ & \hspace*{2ex} $ \Q^*  \ \  {\cdots \mbox{\footnotesize (2)}}$ &$S^3\vee \C P^2\cup_{ [[\iota_2,\iota_3 ],\iota_2]+h_5}e^6$  & no \ ${\cdots \mbox{\footnotesize (4)}}$ &   2${\cdots \mbox{\footnotesize (1)}}$ \\[0.4ex] 
\hline
(ii) \ \  $a=c=1$, $b=0$ &  $(\Q^*)^{\times 2} {\cdots \mbox{\footnotesize (2)}}$ & $S^3\vee \C P^3$ &  yes ${\cdots \mbox{\footnotesize (3)}}$ & $3$ \\[0.4ex] 
\hline 
(iii) \ $a=c=0$, $b=1$ &  $(\Q^*)^{\times 3} {\cdots \mbox{\footnotesize (2)}}$ &$(S^3\vee S^2)\cup_{ [[\iota_2,\iota_3 ],\iota_2]} e^6\vee  S^4$ & no \ ${\cdots \mbox{\footnotesize (4)}}$ &  $4$ ${\cdots \mbox{\footnotesize (1)}}$ \\[0.4ex]  
\hline
(iv) \ $a=b=c=0$ &  $(\Q^*)^{\times 4} {\cdots \mbox{\footnotesize (2)}}$ &$S^3\vee S^2\vee S^4\vee S^6$ &  yes ${\cdots \mbox{\footnotesize (3)}}$ &  $6$ \\[0.4ex]  
\hline
\end{tabular} 
\end{center}   \normalsize

\noindent (1) 
When $f_{\sharp}(\iota_2)=\lambda \iota_2$, $f_{\sharp}(\iota_3)=\mu \iota_3$ ($\lambda,\mu\in \Z$)
for a map $f:X\to X$, we have 
$$f_{\sharp}(  [[\iota_2,\iota_3 ],\iota_2])= \lambda^2\mu [[\iota_2,\iota_3 ],\iota_2] \ \ \ \  \mbox{and}   \ \ \ \ f_{\sharp}(h_5)=\lambda^3h_5 .$$ 
Here, the last relation is obtained by Proposition 2 of \cite{OdaYama17}. \\ 
The case (i):  \ Suppose $abc\neq 0$.
Then if $\lambda \neq 0$, we have $\mu\neq 0$ from $\lambda^2\mu=\lambda^3$ by Proposition 2 of \cite{OdaYama17} making use of a similar argument as in the proof of Theorem  \ref{thm:MimuraToda}, that is, the relation 
\[ s([[\iota_2,\iota_3 ],\iota_2]+ h_5) =  \lambda^2\mu [[\iota_2,\iota_3 ],\iota_2]    + \lambda^3h_5  \ \ \  (\mbox{for some integer} \  s) \] 
implies  $s = \lambda^2\mu = \lambda^3$. 
Thus we have $N\calE (X)=2$.  \\ 
The case (iii): If $a=c=0$ and $b=1$, then we have $s= \lambda^2\mu$. Therefore, if $\lambda \neq 0$ and $\mu\neq 0$, then $s \neq 0$. Then we have $N\calE (X)=3$.

\noindent (2) We determine  $\mathcal{E}( X_0)/\mathcal{E}_{*}( X_0)$ by making use of the Quillen models:  $L(X)$ is given by $(\LL (u_1,u_2,u_3,u_5),\partial ) $ with $|u_i|=i$,  $\partial (u_1)=\partial (u_2)=0$, and \\ 
(i) \ \ \  $\partial (u_3)=[u_1,u_1]$ and  $\partial (u_5)=[[u_1,u_2],u_1]+[u_1,u_3]$, \\
(ii) \ \  $\partial (u_3)=[u_1,u_1]$ and  $\partial (u_5)=[u_1,u_3]$,  \\
(iii) \ $\partial (u_3)=0$ and  $\partial (u_5)=[[u_1,u_2],u_1]$, \\
(iv) \  $\partial (u_3)=0$ and  $\partial (u_5)=0$.\\
Let a DGL-map $f:L(X)\to L(X)$ be given by 
$\ f(u_i)=\lambda_i u_i$ 
 for $\lambda_i\in\Q^*$ $(i=1,2,3,5)$. 
Then, $\mathcal{E}( X_0)/\mathcal{E}_{*}( X_0)$ is determined as follows:

\noindent 
The case  (i): We have  $\lambda_3=\lambda_1^2$ and  $\lambda_5=\lambda_1^2\lambda_2=\lambda_1\lambda_3$  from $\partial\circ f=f\circ \partial$.
If $f\in \calE(L (X))$, then we may set $\lambda:=\lambda_1=\lambda_2\neq 0$.
Thus we have 
$\calE(L (X))/\mathcal{E}_{*}( L(X))\cong \Q^*$ with elements $f$ such that
$$\ f(u_1)=\lambda u_1,  f(u_2)=\lambda u_2,  f(u_3)=\lambda^2 u_3,  f(u_5)=\lambda^3 u_5\ \ \ {\rm for}  \ \ \ \lambda\in\Q^*.
$$

\noindent 
The case  (ii):   $\lambda_3=\lambda_1^2$ and  $\lambda_5=\lambda_1\lambda_3$  from $\partial\circ f=f\circ \partial$.
Thus  we have 
$\calE(L (X))/\mathcal{E}_{*}( L(X))=\{ (\lambda_1,\lambda_2)\}\cong (\Q^*)^{\times 2}$ with elements $f$ such that
$$\ f(u_1)=\lambda_1 u_1,  f(u_2)=\lambda_2 u_2,  f(u_3)=\lambda_1^2 u_3,  f(u_5)=\lambda_1^3 u_5\ \ \ {\rm for}  \ \ \ \lambda_i\in\Q^*.
$$

\noindent 
The case  (iii):   $\lambda_5=\lambda_1^2\lambda_2$  from $\partial\circ f=f\circ \partial$.
Thus
 we have 
$\calE(L (X))/\mathcal{E}_{*}( L(X))=\{ (\lambda_1,\lambda_2,\lambda_3)\}\cong (\Q^*)^{\times 3}$ with elements $f$ such that
$$\ f(u_1)=\lambda_1 u_1,  f(u_2)=\lambda_2 u_2,  f(u_3)=\lambda_3 u_3,  f(u_5)=\lambda_1^2 \lambda_2 u_5\ \ \ {\rm for} \ \ \ \lambda_i\in\Q^*.
$$

\noindent 
The case (iv): there is no restriction for $\{ \lambda_i\}$ from $\partial\circ f=f\circ \partial$.
Thus
 we have 
$\calE(L (X))/\mathcal{E}_{*}( L(X))=\{ (\lambda_1,\lambda_2,\lambda_3,\lambda _5)\}\cong (\Q^*)^{\times 4}$ with elements $f$ such that
$$\ f(u_1)=\lambda_1 u_1,  f(u_2)=\lambda_2 u_2,  f(u_3)=\lambda_3 u_3,  f(u_5)=\lambda_5 u_5\ \ \ {\rm for}  \ \ \ \lambda_i\in\Q^*.
$$
\noindent
Remark that the map $g\in \calE(L (X))$ with $g(u_3)=u_3+[u_1,u_2]$ and $g(u_i)=u_i$ for the other $i$ is  an element of $\mathcal{E}_{*}( L(X))$.

\noindent 
(3) By Example \ref{ex:formal}(c),    the one-point union of formal spaces is formal.

\noindent 
(4) $H^*(X;\Q )=\Lambda (x)\otimes \Q [y,z]/(xy,xz,y^3,yz,z^2)=H^*(S^3\vee \C P^2\vee S^6;\Q )$
where $|x|=3$, $|y|=2$ and $|z|=6$. Since $S^3\vee \C P^2\vee S^6$ is formal and $X_0\not\sim (S^3\vee \C P^2\vee S^6)_0$,  the space $X$ is not formal by Remark \ref{formal-u}.   
\end{example}

\begin{example} \label{Exa:fourAttachingMaps} 
\rm (Four attaching maps of a cell)  \ 
Let $B=S^3\vee \C P^4$  and $\iota_2 : S^2 \to  \C P^4 \to S^3\vee \C P^4=B$ and $\iota_3 : S^3 \to   S^3\vee \C P^4=B$  be the inclusions.  Let $h_9 : S^9 \to S^9 / S^1 = \C P^4 \subset S^3\vee \C P^4=B$ be the natural map.  Let $X=B\cup_{\alpha}e^{10}$  for the attaching map 
$$\alpha =a  [[[\iota_2, \iota_3],\iota_3],[\iota_2,\iota_3]] +b h_9 : S^9 \to  B=S^3\vee \C P^4$$
with $a,b\in \Z$.
Let  $f:X\to X$  be a map and $g = f|_B : B \to B$ be the restriction  of $f$.  When $g_{\sharp}(\iota_2)=\lambda \iota_2$, $g_{\sharp}(\iota_3)=\mu \iota_3$ ($\lambda,\mu\in \Z$), we have 
$$g_{\sharp}( [[[\iota_2, \iota_3],\iota_3],[\iota_2,\iota_3]])= \lambda^2\mu^3[[[\iota_2, \iota_3],\iota_3],[\iota_2,\iota_3]] \ \  \ \mbox{and}  \ \ \ g_{\sharp}(h_9)=\lambda^5h_9$$
The last relation is obtained by Proposition 2 of \cite{OdaYama17}. Now, we determine $N\calE (X)$.   

\noindent 
The case (i): Suppose $ab\neq 0$.
Then if $\lambda \neq 0$, we have $\mu\neq 0$ from $\lambda^2\mu^3=\lambda^5$ by Proposition 2 of \cite{OdaYama17} as in the proof of Theorem  \ref{thm:MimuraToda}, and hence we have $N\calE (X)=2$.  \\  
The case (ii): Let $H_{2} (X) = \Z \{x_{2}\}$, $H_{3} (X) = \Z \{x_{3}\}$, $H_{10} (X) = \Z \{x_{10}\}$. Suppose that $f_{*} (x_{2}) = \lambda x_{2}$, $f_{*} (x_{3}) = \mu x_{3}$ and $f_{*} (x_{10}) = \kappa x_{10}$. Then we have a relation $k = \lambda^{2} \mu^{3}$  by Proposition 2 of \cite{OdaYama17}. It follows that $N\calE (X) = 3$.

We obtain the following table:

\begin{center} \small
\setlength{\extrarowheight}{1ex} 
\begin{tabular}{|l||c|c|c|c|c|c|}  
\hline   \hspace*{5ex} {case} & 
{$\mathcal{E}( X_0)/\mathcal{E}_{*}( X_0)$ }&$X$  &$H^*(X;\Q )$ &
{formal}&  {$N\calE (X)$}   \\[0.4ex]    
\hline 
(i) \ \ \ $ab\neq 0$ & \hspace*{1ex} $\Q^*$ \ \ \ ${\cdots \mbox{\footnotesize (5)}}$& $(S^3 \vee \C P^4)\cup_{\alpha}e^{10}$  & (1) & no \ ${\cdots \mbox{\footnotesize (4)}}$  &  $2$ \\[0.4ex] 
\hline
(ii) \ \ $a\neq 0$, $b=0$ &  $(\Q^*)^{\times 2}$ ${\cdots \mbox{\footnotesize (5)}}$& $(S^3 \vee \C P^4)\cup_{\alpha}e^{10}$ &(1)  & no \ ${\cdots \mbox{\footnotesize (4)}}$  & $3$ \\[0.4ex] 
\hline
(iii) \ $a=0$, $b\neq 0$ &  $(\Q^*)^{\times 2}$ ${\cdots \mbox{\footnotesize (5)}}$& $S^3 \vee \C P^5$ & (2)& yes ${\cdots \mbox{\footnotesize (3)}}$  &  $3$ \\[0.4ex] 
\hline 
(iv) \ $a=b=0$ &  $(\Q^*)^{\times 3}$ ${\cdots \mbox{\footnotesize (5)}}$& $S^3 \vee \C P^4  \vee S^{10}$ &(1)&  yes ${\cdots \mbox{\footnotesize (3)}}$  &  $10$ \\[0.4ex]  
\hline
\end{tabular} 
\end{center}   \normalsize 
where
(1) $=\Lambda (x)\otimes \Q [y,z]/(xy,xz,y^5,yz,z^2)$ and
(2) $=\Lambda (x)\otimes \Q [y]/(xy,y^6)$
with $|x|=3$, $|y|=2$ and $|z|=10$.

\noindent 
(3) By Example \ref{ex:formal}(c),    the one-point union of formal spaces is formal.

\noindent 
(4)  See Remark \ref{formal-u} for formal  spaces.

\noindent (5) 
$L(X)$ is given by $(\LL (u_1,u_2,u_3,u_5,u_7,u_9),\partial )$ with $|u_i|=i$,   $\partial (u_1)=\partial (u_2)=0$, and \\ 
(i) \ \   $\partial (u_3)=[u_1,u_1]$,  $\partial (u_5)= 3[u_1,u_3]$,  $\partial (u_7)= 4[u_1,u_5]+ 3[u_3,u_3] $ and \\  
\hspace*{5ex}$\partial (u_9)=a[[[u_1,u_2],u_2],[u_1,u_2]]+b(5[u_1,u_7]+10[u_3,u_5])$, \\
(ii) \ \ $\partial (u_3)=[u_1,u_1]$,  $\partial (u_5)=3[u_1,u_3]$,  $\partial (u_7)=4[u_1,u_5]+3[u_3,u_3] $ and \\  
\hspace*{5ex}$\partial (u_9)=[[[u_1,u_2],u_2],[u_1,u_2]]$, \\
(iii) \ \  $\partial (u_3)=[u_1,u_1]$,  $\partial (u_5)=3[u_1,u_3]$,  $\partial (u_7)=4[u_1,u_5]+3[u_3,u_3] $ and \\  
\hspace*{5ex}$\partial (u_9)=5[u_1,u_7]+10[u_3,u_5]$, \\
(iv) \  \  $\partial (u_3)=[u_1,u_1]$,  $\partial (u_5)=3[u_1,u_3]$,  $\partial (u_7)=4[u_1,u_5]+3[u_3,u_3] $ and 
$\partial (u_9)=0$.

\hfill  (Refer   p.94 of \cite{T} for the Quillen model of $ \C P^n$.)

Let a DGL-map $f:L(X)\to L(X)$ be given by 
$\ f(u_i)=\lambda_i u_i$ 
 for $\lambda_i\in\Q^*$ $(i=1,2,3,5,7,9)$. Then, $\mathcal{E}( X_0)/\mathcal{E}_{*}( X_0)$ is determined as follows:

\noindent 
The case (i):   $\lambda_3=\lambda_1^2$,   $\lambda_5=\lambda_1\lambda_3$, 
 $\lambda_7=\lambda_1\lambda_5=\lambda_3\lambda_3$ and 
 $\lambda_9=\lambda_1^2\lambda_2^3=\lambda_1\lambda_7=\lambda_3\lambda_5$
 from $\partial\circ f=f\circ \partial$.
If $f\in \calE(L (X))$, then we have $\lambda:=\lambda_1=\lambda_2\neq 0$.
Thus we have 
$\calE(L (X))/\mathcal{E}_{*}( L(X))\cong \Q^*$ with elements $f$ such that \ $f(u_1)=\lambda u_1$ and   
$$f(u_2)=\lambda u_2,   f(u_3)=\lambda^2 u_3,  f(u_5)=\lambda^3 u_5, f(u_7)=\lambda^4 u_7,  f(u_9)=\lambda^5 u_9.
$$

\noindent 
The case (ii):   $\lambda_3=\lambda_1^2$,   $\lambda_5=\lambda_1\lambda_3$, 
 $\lambda_7=\lambda_1\lambda_5=\lambda_3\lambda_3$ and 
 $\lambda_9=\lambda_1^2\lambda_2^3$
 from $\partial\circ f=f\circ \partial$.
Thus we have 
$\calE(L (X))/\mathcal{E}_{*}( L(X))=\{ (\lambda_1,\lambda_2)\}\cong {\Q^*}^{\times 2}$ with elements $f$ such that $f(u_1)=\lambda_1 u_1,  f(u_2)=\lambda_2 u_2$  and   
$$f(u_3)=\lambda_1^2 u_3,  f(u_5)=\lambda_1^3 u_5, f(u_7)=\lambda_1^4 u_7,  f(u_9)=\lambda_1^2\lambda_2^3 u_9.
$$

\noindent 
The case  (iii):   $\lambda_3=\lambda_1^2$,   $\lambda_5=\lambda_1\lambda_3$, 
 $\lambda_7=\lambda_1\lambda_5=\lambda_3\lambda_3$ and 
 $\lambda_9=\lambda_1\lambda_7=\lambda_3\lambda_5$
 from $\partial\circ f=f\circ \partial$.
Thus we have 
$\calE(L (X))/\mathcal{E}_{*}( L(X))=\{ (\lambda_1,\lambda_2)\}\cong {\Q^*}^{\times 2}$ with elements $f$ such that \ $f(u_1)=\lambda_1 u_1,  f(u_2)=\lambda_2 u_2$ and 
$$f(u_3)=\lambda_1^2 u_3,  f(u_5)=\lambda_1^3 u_5, f(u_7)=\lambda_1^4 u_7,  f(u_9)=\lambda_1^5 u_9.
$$

\noindent 
The case  (iv):   $\lambda_3=\lambda_1^2$,   $\lambda_5=\lambda_1\lambda_3$, 
 $\lambda_7=\lambda_1\lambda_5=\lambda_3\lambda_3$
 from $\partial\circ f=f\circ \partial$.
Thus  we have 
$\calE(L (X))/\mathcal{E}_{*}( L(X))=\{ (\lambda_1,\lambda_2,\lambda_9)\}\cong {\Q^*}^{\times 3}$ with elements $f$ such that \ $f(u_1)=\lambda_1 u_1,  f(u_2)=\lambda_2 u_2 ,  f(u_9)=\lambda_9 u_9$  and  
$$  f(u_3)=\lambda_1^2 u_3,  f(u_5)=\lambda_1^3 u_5, f(u_7)=\lambda_1^4 u_7.
$$

\end{example}

\begin{remark} \rm  \label{rem:exaNEXnonRatandRat}     
As for the relation between $N\calE (X)$ and $N\calE (X_0)$, we have the following examples: 
\begin{itemize}
\item[{\rm (1)}] \  $N\calE(S^{5} \times M(\Z /2 , 3)) = 5$, \  $N\calE( (S^{5} \times M(\Z /2 , 3))_{0}) = N\calE( S^{5}_{0}) = 5$.  
\item[{\rm (2)}] \  $N\calE(S^{5} \times M(\Z /2 , 7)) = 7$, \  $N\calE( (S^{5} \times M(\Z /2 , 7))_{0})= N\calE( S^{5}_{0})  = 5$. 
\item[{\rm (3)}] \  $N\calE(M(\Z /2 , m)) = m$, \  $N\calE(M(\Z /2 , m)_{0}) = N\calE(*) = 0$. 
\end{itemize} 
\end{remark}

\section{Self-closeness numbers defined by homology or cohomology  groups }  \label{sec:NstEX} 
Choi and Lee \cite{ChoiLee15} introduced the submonoid $\calA_{\sharp}^{k} (X)$ of $[X,X]$ making use of the homotopy groups of a space $X$.  
If we use the homology group $H_{i} (X)$ and the cohomology group $H^{i} (X)$ in place of the homotopy group  $\pi_{i} (X)$ in Definition \ref{def:selfcloseness}, we have definitions  of $\calA_{*}^{k} (X)$ and $\calA^{*}_{k} (X)$, respectively. 

\begin{definition}  \label{def:NstEX}  \rm 
The {\it homology self-closeness number $N_{*}\calE (X)$}  and the {\it cohomology self-closeness number $N^{*}\calE (X)$ of  a space   $X$} is defined by  
\[ N_{*}\calE (X) := \min \{\, k \ | \ \calA_{*}^{k} (X) = \calE (X) \} \ \ \mbox{and} \ \ N^{*}\calE (X) := \min \{\, k \ | \ \calA^{*}_{k} (X) = \calE (X) \}. \] 
\end{definition}

 If   $\calA_{*}^{k} (X) \neq  \calE (X)$ (resp. $\calA^{*}_{k} (X) \neq \calE (X)$) for any $k = 0,\, 1,\, 2,\, 3,\, \cdots$ or $\infty$, then $N_{*}\calE (X)$ (resp. $N^{*}\calE (X)$) is not defined by Definition \ref{def:NstEX}.  In this case we  define, temporarily,  $N_{*}\calE (X) = \O$ (resp. $N^{*}\calE (X) = \O$). In Definition  \ref{def:NstEX} the connectivity of the space $X$ is not assumed  to define  $N_{*}\calE (X)$ and  $N^{*}\calE (X)$. 

\begin{proposition} \label{prop:NstEXHomInv} 
If  $X$ and $Y$ have the same homotopy type, then the equalities $N_{*}\calE (X) = N_{*}\calE (Y)$ and $N^{*}\calE (X) = N^{*}\calE (Y)$ hold. 
\end{proposition}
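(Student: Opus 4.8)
The plan is to exhibit an explicit bijection between the self-map monoids of $X$ and $Y$ that simultaneously carries $\calE(X)$ onto $\calE(Y)$ and each $\calA_*^k(X)$ onto $\calA_*^k(Y)$ (and likewise in the cohomology setting), and then to read off equality of the minima defining $N_*\calE$ and $N^*\calE$. First I would fix a homotopy equivalence $\phi : X \to Y$ with homotopy inverse $\psi : Y \to X$, so that $\psi \circ \phi \simeq 1_X$ and $\phi \circ \psi \simeq 1_Y$, and define $\Phi : [X,X] \to [Y,Y]$ by $\Phi([f]) = [\phi \circ f \circ \psi]$. Using $\psi\circ\phi \simeq 1_X$ and $\phi\circ\psi\simeq 1_Y$, the analogous map $\Psi : [Y,Y] \to [X,X]$, $\Psi([g]) = [\psi \circ g \circ \phi]$, is a two-sided inverse of $\Phi$, so $\Phi$ is a bijection (in fact a monoid isomorphism, though only the bijection is needed below).

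Next I would check that $\Phi$ preserves the two relevant subsets in both directions. For $\calE$: since $\phi$ and $\psi$ are homotopy equivalences, $\phi\circ f\circ\psi$ is a homotopy equivalence whenever $f$ is (a composite of equivalences), and conversely $f \simeq \psi\circ(\phi\circ f\circ\psi)\circ\phi$ shows $f$ is an equivalence whenever $\Phi(f)$ is; hence $f \in \calE(X) \iff \Phi(f) \in \calE(Y)$. For $\calA_*^k$: functoriality of homology gives $\Phi(f)_* = \phi_* \circ f_* \circ \psi_*$ on $H_i$, where $\phi_*$ and $\psi_*$ are isomorphisms in every degree because $\phi,\psi$ are homotopy equivalences; therefore $\Phi(f)_* : H_i(Y) \to H_i(Y)$ is an isomorphism if and only if $f_* : H_i(X) \to H_i(X)$ is, degree by degree, so $\Phi(f) \in \calA_*^k(Y) \iff f \in \calA_*^k(X)$. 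Combining the two, $\Phi$ restricts to bijections $\calE(X) \to \calE(Y)$ and $\calA_*^k(X) \to \calA_*^k(Y)$ for every $k$.

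Finally I would conclude. Since a homotopy equivalence induces homology isomorphisms in all degrees, the inclusion $\calE(X) \subseteq \calA_*^k(X)$ always holds, and because $\Phi$ is a bijection matching these pairs of subsets, the equality $\calA_*^k(X) = \calE(X)$ holds precisely when $\calA_*^k(Y) = \calE(Y)$ does. Thus the set of indices $k$ realizing the equality is identical for $X$ and $Y$ (both empty exactly when the temporary value $\O$ is assigned), and the minima agree, giving $N_*\calE(X) = N_*\calE(Y)$. The cohomology statement is proved verbatim, replacing the covariant identity by $\Phi(f)^* = \psi^* \circ f^* \circ \phi^*$ on $H^i$ and noting that $\phi^*,\psi^*$ are isomorphisms in each degree, whence $N^*\calE(X) = N^*\calE(Y)$.

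I do not expect a genuine obstacle here: the argument is structural rather than computational. The only point requiring care is bookkeeping in the subset-preservation step, namely keeping the equivalence (rather than mere inclusion) in both directions so that the \emph{equality} $\calA_*^k = \calE$, and not just an inclusion, transfers across $\Phi$; everything else is formal functoriality.
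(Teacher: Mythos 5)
Your proof is correct and follows essentially the same route as the paper's: both arguments conjugate a self-map by the homotopy equivalence $\varphi$ and its inverse $\psi$, use functoriality of (co)homology to see that the conjugate induces isomorphisms in the same degrees, and deduce membership in $\calE$ in both directions. Your packaging via the monoid bijection $\Phi$ is a slightly more structural presentation of the identical idea (the paper instead constructs the homotopy inverse $\overline{f}=\varphi\circ g\circ\psi$ explicitly), but the mathematical content coincides.
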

\begin{proof} 
Assume that $X \simeq Y$ (homotopy equivalent). Then there exist maps $\varphi : X \to Y$ and $\psi : Y \to X$ such that $\psi \circ \varphi \simeq 1_{X}$  and  $\varphi \circ \psi \simeq 1_{Y}$.   

Assume that $N_{*}\calE (X) = k$ (resp. $N^{*}\calE (X) = k$) and  for a map   $f: Y \to Y$, the induced homomorphism $f_{*} : H_{i}(Y) \to  H_{i}(Y)$ (resp. $f^{*} : H^{i}(Y) \to  H^{i}(Y)$)  is an isomorphism for any $i \leq k$. Then the composite $\psi \circ f \circ \varphi : X \to X$ induces a homomorphism 
\[ \xymatrix@=20pt{ (\psi \circ f \circ \varphi)_{*} = \psi_{*} \circ f_{*} \circ \varphi_{*}:  H_{i}(X) \ar[r]_-{ \cong \  }^-{ \  \varphi_{*} \  } &  H_{i}(Y) \ar[r]_-{ \cong \ }^-{ \ f_{*}  \  }  & H_{i}(Y) \ar[r]_-{ \cong \ }^-{ \  \psi_{*} \  } &   H_{i}(X)  
} \] 
\[{\rm (resp.} \ \ \ \xymatrix@=20pt{ (\psi \circ f \circ \varphi)^{*} = \varphi^{*} \circ f^{*} \circ \psi^{*} :  H^{i}(X) \ar[r]_-{ \cong \ }^-{ \  \psi^{*} \  } &  H^{i}(Y) \ar[r]_-{ \cong \ }^-{ \ f^{*}  \  }  & H^{i}(Y) \ar[r]_-{ \cong \ }^-{ \  \varphi^{*} \  } &   H^{i}(X)  
}  {\rm )}\] 
which is an isomorphism  for any $i \leq k$. Hence $\psi \circ f \circ \varphi \in \calE(X)$. Let $g: X \to X$ be the homotopy inverse of $\psi \circ f \circ \varphi$, that is, 
\[g \circ (\psi \circ f \circ \varphi) \simeq 1_{X} \simeq (\psi \circ f \circ \varphi) \circ g. \]   
\[ \xymatrix@=20pt{
X  \ar[d]_{\varphi \ }  \ar@<0.5ex>[rr]^-{\psi \circ f \circ \varphi} &  & \ar@<0.5ex>[ll]^-{g} X \\
Y \ar[rr]_-{ f } \   &  &  Y \ar[u]_{ \  \psi} 
}  \] 
Let  $\overline{f} = \varphi \circ g \circ \psi$. Then we see 
\[ \overline{f} \circ f \simeq (\varphi \circ g \circ \psi) \circ f \circ (\varphi \circ \psi) =  \varphi \circ g \circ (\psi  \circ f \circ  \varphi) \circ \psi  \simeq  \varphi \circ 1_{X} \circ \psi =  \varphi \circ \psi  \simeq  1_{Y} ;  \] 
\[f \circ \overline{f} \simeq (\varphi \circ \psi) \circ f \circ (\varphi \circ g \circ \psi)  =  \varphi \circ (\psi \circ f \circ  \varphi) \circ g \circ \psi    \simeq  \varphi \circ 1_{X} \circ \psi =  \varphi \circ \psi  \simeq  1_{Y} . \] 
Therefore, we have  $f \in \calE(Y)$.  Hence $N_{*}\calE (Y) \leq k =N_{*}\calE (X)$. Similarly, we have $N_{*}\calE (X) \leq N_{*}\calE (Y)$.  It follows that $N_{*}\calE (X) = N_{*}\calE (Y)$. (resp. Hence $N^{*}\calE (Y) \leq k =N^{*}\calE (X)$. Similarly, we have $N^{*}\calE (X) \leq N^{*}\calE (Y)$.  It follows that $N^{*}\calE (X) = N^{*}\calE (Y)$.)
\end{proof}

\begin{remark} \rm  
\begin{enumerate} 
\item[{\rm (1)}] \ 
The inequalities $N_{*}\calE (X) \leq \dim (X)$ and $N^{*}\calE (X) \leq \dim (X)$ do not always hold for non-simply connected space $X$ as we see in the case $X=\R P^{2n}$ for $n \geq 1$:   If $n \geq 1$ and $k$ is odd and $k \neq \pm 1$, then $g_{k}^{2n} : \R P^{2n} \to \R P^{2n}$ in the proof of Theorem \ref{thm:NERPn}  induces an isomorphism for any $g_{k\sharp}^{2n} : \pi_{s}(\R P^{2n}) \to \pi_{s}(\R P^{2n})$ for any $s <2n$, and hence $g_{k*}^{2n} : H_{s}(\R P^{2n}) \to H_{s}(\R P^{2n})$ is an isomorphism  for any $s <2n$ by Proposition   \ref{prop:NEXeqNstEX}(1) and therefore an isomorphism  for any $s$. However,     $g_{k}^{2n}$ is not a homotopy equivalence.  
Therefore, we see $N_{*}\calE (\R P^{2n})= \O$ and $N^{*}\calE (\R P^{2n})= \O$ for any $n \geq 1$.  

If $X$ is not $1$-connected, then the condition that $f_{*} : H_{*}(X) \to H_{*}(X)$  (or $f^{*} : H^{*}(X) \to H^{*}(X)$) is an isomorphism does not always imply $f \in \calE(X)$; therefore, in this case,  $N_{*}\calE (X)= \O$ (or $N^{*}\calE (X)$= \O).

\item[{\rm (2)}]   \ 
If $X$ is $1$-connected  and $H_{k} (X)$  and $\pi_{k} (X)$ are finitely generated abelian groups for any $k$, then  we have $N_{*}\calE (X)  =    N\calE (X) \leq \dim (X)$ by  Corollary~\ref{cor:NEXeqNstEX} and Theorem 2 of Choi and  Lee \cite{ChoiLee15}.  

We also have a relation $N^{*}\calE (X) \leq  N_{*}\calE (X) + 1$ by Proposition \ref{prop:NcstEXleqNstEXp1}.  
\item[{\rm (3)}]   \   
The self-closeness number $N\calE (X)$  in Definition  \ref{def:selfcloseness} is homotopy invariant as we see by an argument similar to the proof of Proposition \ref{prop:NstEXHomInv}.  In Definition  \ref{def:selfcloseness} we assumed that $X$ is a $0$-connected space to define $N\calE (X)$. If $X$ is not $0$-connected, then $N\calE (X)$ is not defined when the components which do not contain base point have non-zero homotopy groups. 
If $X$ is a $0$-connected CW-complex, then $N\calE (X)$ is determined for some $k = 0,\,1,\,2,\, 3,\, \cdots$ or $\infty$ as is discussed in Choi and Lee \cite{ChoiLee15}. 

\end{enumerate} 
\end{remark}

\begin{example}  \label{exa:NstEXSnCPn} \rm 
\begin{enumerate} 
\item[{\rm (1)}] \  It is not difficult to prove:   
$N\calE(S^{n} )= N_{*}\calE (S^{n}) =N^{*}\calE (S^{n}) = n$ and $N\calE(\C P^{n} )= N_{*}\calE (\C P^{n}) =N^{*}\calE (\C P^{n}) = 2$.  
\item[{\rm (2)}]   \  If $X = \bigvee_{n=1}^{\infty} S^{n}$, then $ N_{*}\calE (X) = \infty$.   
\end{enumerate} 
\end{example}

Our method used in the discussion of Section \ref{sec:OdaYamaInprove} can be effectively applied in the section to prove some relations among $N\calE (X)$, $N_{*}\calE (X)$ and $N^{*}\calE (X)$. 

\begin{proposition}  \label{prop:NEXeqNstEX}  \  Let $n \geq 2$. Let $f: X \to X$ be a self-map. 
\begin{enumerate} 
\item[{\rm (1)}] \  If $X$ is a  $0$-connected space  and $f_{\sharp} : \pi_{i} (X) \to  \pi_{i} (X)$ \  is an isomorphism for any  $i \leq n$ and $H_{n} (X)$ is a finitely generated abelian group, then   $f_{*} : H_{i} (X)  \to H_{i} (X)$ \  is an isomorphism for any  $i \leq n$. 
\item[{\rm (2)}]   \   If $X$ is $1$-connected and $f_{*} : H_{i} (X) \to H_{i} (X)$ \  is an isomorphism for any  $i \leq n$ and $\pi_{n} (X)$ is a finitely generated abelian group, then   $f_{\sharp} : \pi_{i} (X)  \to \pi_{i} (X)$ \  is an isomorphism for any  $i \leq n$. 
\item[{\rm (3)}]   \ If  $f_{*} : H_{i} (X) \to H_{i} (X)$ \  is an isomorphism for any  $i \leq n$, then   $f^{*} : H^{i} (X)  \to H^{i} (X)$ \  is an isomorphism for any  $i \leq n$. 
\end{enumerate}   
\end{proposition}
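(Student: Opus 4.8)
The plan is to treat the three implications by converting $f$ into an inclusion via its mapping cylinder $M_f$ (with $M_f\simeq X$) and then extracting information from the long exact sequences of the pair $(M_f,X)$, the relative Hurewicz theorem, and the universal coefficient theorem. The common device in the two ``homotopy versus homology'' statements (1) and (2) is that the hypotheses only produce an \emph{epimorphism} in the top degree $n$, which is then promoted to an isomorphism by the elementary fact that a surjective endomorphism of a finitely generated abelian group is bijective; this is exactly where the finite-generation assumptions on $H_n(X)$ and $\pi_n(X)$ are used.

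For (1), I would replace $f$ by the inclusion $X\hookrightarrow M_f$. Since $f_\sharp$ is an isomorphism on $\pi_i$ for every $i\le n$, the homotopy exact sequence of $(M_f,X)$ forces $\pi_i(M_f,X)=0$ for all $i\le n$ (surjectivity at level $i$ and injectivity at level $i-1$, the case $i=1$ being covered by $0$-connectedness). The relative Hurewicz theorem then gives $H_i(M_f,X)=0$ for $i\le n$: the point is that although $X$ need only be $0$-connected, the $\pi_1$-action affects only the first possibly nonzero relative homology group, hence is harmless below degree $n+1$. Feeding this back into the homology exact sequence shows that $f_*\colon H_i(X)\to H_i(X)$ is an isomorphism for $i\le n-1$ and an epimorphism for $i=n$, and finite generation of $H_n(X)$ upgrades the latter to an isomorphism.

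For (2) I would run the argument in reverse, now using $1$-connectedness essentially. From $f_*$ being an isomorphism on $H_i$ for $i\le n$, the homology exact sequence of $(M_f,X)$ gives $H_i(M_f,X)=0$ for $i\le n$. Because $X$ (hence $M_f$) is $1$-connected, the pair $(M_f,X)$ is $1$-connected, so an induction on degree using the relative Hurewicz isomorphism $\pi_k(M_f,X)\cong H_k(M_f,X)$ yields $\pi_i(M_f,X)=0$ for $i\le n$. The homotopy exact sequence then makes $f_\sharp$ an isomorphism for $i\le n-1$ and an epimorphism for $i=n$, and finite generation of $\pi_n(X)$ (abelian since $n\ge 2$) settles the top degree.

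Statement (3) is purely algebraic: the universal coefficient short exact sequence $0\to \mathrm{Ext}(H_{i-1}(X),\Z)\to H^i(X)\to \mathrm{Hom}(H_i(X),\Z)\to 0$ is natural in $X$, so $f$ induces a map of such sequences. For $i\le n$ both $H_i(X)$ and $H_{i-1}(X)$ receive isomorphisms under $f_*$, hence so do the outer terms $\mathrm{Hom}(H_i(X),\Z)$ and $\mathrm{Ext}(H_{i-1}(X),\Z)$, and the five lemma makes $f^*\colon H^i(X)\to H^i(X)$ an isomorphism. The main obstacle, and the only genuinely delicate point, is the relative Hurewicz step in (1): one must justify dropping simple connectivity, which is legitimate precisely because only the \emph{vanishing} of relative homology below degree $n+1$ is needed, not the Hurewicz isomorphism in the critical degree where the $\pi_1$-action would intervene.
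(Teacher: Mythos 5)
Your proposal is correct and follows essentially the same route as the paper: in all three parts the hypotheses yield an isomorphism below degree $n$ and only an epimorphism in degree $n$, which is then promoted to an isomorphism because a surjective endomorphism of a finitely generated abelian group is bijective, and part (3) is the same universal-coefficient plus five-lemma argument. The only difference is that where the paper simply cites Whitehead's theorem ((7.13) of \cite{Whitehead78}) for the ``iso below $n$, epi at $n$'' step, you reprove that theorem from scratch via the mapping cylinder and the relative Hurewicz theorem, correctly observing that in part (1) only the vanishing of the relative homology is needed, so the $\pi_1$-action causes no trouble.
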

\begin{proof} (1) \ If  $f_{\sharp} : \pi_{i} (X) \to \pi_{i} (X)$ \  is an isomorphism for any  $i \leq n$, then    $f_{*} : H_{i} (X)  \to H_{i} (X)$ \  is an isomorphism for any  $i \leq n -1$ and $f_{*} : H_{n} (X)  \to H_{n} (X)$ \  is surjective (by the Whitehead theorem, (7.13) Theorem of Whitehead \cite{Whitehead78})   and hence an isomorphism since $H_{n} (X)$ is a finitely generated abelian group.

\noindent 
(2)  \ If $X$ is $1$-connected and $f_{*} : H_{i} (X) \to H_{i} (X)$ \  is an isomorphism for any  $i \leq n$,  then   $f_{\sharp} : \pi_{i} (X)  \to \pi_{i} (X)$ \  is an isomorphism for any  $i \leq n-1$ and $f_{*} : H_{n} (X) \to H_{n} (X)$ \  is surjective (by the Whitehead theorem)   and hence an isomorphism since  $\pi_{n} (X)$ is a finitely generated abelian group.

\noindent 
(3) \  Suppose that  $f_{*} : H_{i}(X) \to H_{i}(X) $ is an isomorphism for any $i \leq n$. 
We have    a funtorial short exact sequence (the universal coefficient theorem): 
$$0 \xrightarrow{ \ \ } {\rm Ext} (H_{q-1} (X), \Z) \xrightarrow{ \ \ } H^{q} (X)\xrightarrow{ \   \ }   {\rm Hom} (H_{q} (X), \Z)  \xrightarrow{ \ \ }  0 $$ 
for any $q$ by  Corollary 53.2 (p.323) of Munkres \cite{Munkres84} or  3 Theorem (p.243) of Spanier \cite{Spanier66}.  By the naturality of the above  exact sequence  and the five lemma, we see that $f^{*} : H^{i}(X) \to H^{i}(X) $ is an isomorphism for any $i \leq n$.     
\end{proof}
 
In the following we assume that each of $N\calE (X)$, $N_{*}\calE (X)$ and $N_{*}\calE (X)$ are defined as $0,\, 1,\, 2,\, 3,\, \cdots$  or $\infty$ when we discuss equalities or inequalities among them.

\begin{theorem} \label{thm:NEXandNstExRel} 
\begin{itemize}
\item[{\rm (1)}]  \    $N\calE (X) \leq  N_{*}\calE (X)$ if $X$ is $0$-connected and $H_{k} (X)$ is a finitely generated abelian group for $k=N_{*}\calE (X)$.  
\item[{\rm (2)}] \   $N_{*}\calE (X)  \leq    N\calE (X)$ if $X$ is $1$-connected  and $\pi_{k} (X)$ is a finitely generated abelian group for $k=N\calE (X)$. 
\item[{\rm (3)}] \  $N_{*}\calE (X) \leq  N^{*}\calE (X) $ \ for any space $X$. 
\end{itemize} 
\end{theorem}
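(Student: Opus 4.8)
The plan is to deduce all three inequalities from Proposition \ref{prop:NEXeqNstEX}, which is the only nontrivial input; everything else is bookkeeping with the defining minima. First I would record that for every $k$ one has $\calE(X)\subseteq \calA_{\sharp}^{k}(X)$, $\calE(X)\subseteq \calA_{*}^{k}(X)$ and $\calE(X)\subseteq \calA^{*}_{k}(X)$, since any self-homotopy equivalence induces isomorphisms on all homotopy, homology and cohomology groups. Consequently, to prove that one of these self-closeness numbers is bounded above by some $m$, it suffices to exhibit a single level $k\leq m$ at which the relevant $\calA$-set already collapses to $\calE(X)$, because that number is defined as the minimal such level.

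For (1) I would put $k=N_{*}\calE(X)$, so that $\calA_{*}^{k}(X)=\calE(X)$ by definition. Given $f\in\calA_{\sharp}^{k}(X)$, the map $f_{\sharp}$ is an isomorphism in degrees $\leq k$; since $H_{k}(X)$ is finitely generated abelian, Proposition \ref{prop:NEXeqNstEX}(1) applied with $n=k$ yields that $f_{*}$ is an isomorphism in degrees $\leq k$, that is, $f\in\calA_{*}^{k}(X)=\calE(X)$. Hence $\calA_{\sharp}^{k}(X)=\calE(X)$ and $N\calE(X)\leq k=N_{*}\calE(X)$. For (2) I would put $k=N\calE(X)$, so $\calA_{\sharp}^{k}(X)=\calE(X)$; given $f\in\calA_{*}^{k}(X)$, the map $f_{*}$ is an isomorphism in degrees $\leq k$, and since $X$ is $1$-connected and $\pi_{k}(X)$ is finitely generated abelian, Proposition \ref{prop:NEXeqNstEX}(2) gives that $f_{\sharp}$ is an isomorphism in degrees $\leq k$, so $f\in\calA_{\sharp}^{k}(X)=\calE(X)$, whence $N_{*}\calE(X)\leq k=N\calE(X)$. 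For (3) I would put $k=N^{*}\calE(X)$, so $\calA^{*}_{k}(X)=\calE(X)$; given $f\in\calA_{*}^{k}(X)$, Proposition \ref{prop:NEXeqNstEX}(3) turns the homology isomorphisms in degrees $\leq k$ into cohomology isomorphisms in degrees $\leq k$, so $f\in\calA^{*}_{k}(X)=\calE(X)$, giving $N_{*}\calE(X)\leq k=N^{*}\calE(X)$.

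I do not expect a serious obstacle, since the analytic content is entirely carried by Proposition \ref{prop:NEXeqNstEX}. The one point requiring care is that each finiteness hypothesis is imposed precisely at the degree equal to the self-closeness number on the right-hand side, which is exactly the degree $n=k$ at which Proposition \ref{prop:NEXeqNstEX} must be invoked; this is why (3), resting on the unconditional universal-coefficient argument of Proposition \ref{prop:NEXeqNstEX}(3), needs no finiteness assumption, whereas (1) and (2) do. Finally, if the right-hand side equals $\infty$ the corresponding inequality is vacuous, in accordance with the convention recorded just before the statement, so that case needs no separate treatment.
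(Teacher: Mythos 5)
Your proposal is correct and follows essentially the same route as the paper: for each part, set $k$ equal to the self-closeness number on the right-hand side and apply the corresponding part of Proposition \ref{prop:NEXeqNstEX} at level $n=k$ to show the relevant $\calA$-set collapses to $\calE(X)$. Your additional remarks on the inclusions $\calE(X)\subseteq\calA_{\sharp}^{k}(X)$ and the $\infty$ convention are consistent with the paper's (implicit) treatment.
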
 
\begin{proof} (1) \ Let $k=N_{*}\calE (X)$.  Assume that \ $f_{\sharp} : \pi_{i} (X) \to \pi_{i} (X)$ is an isomorphism for any  $i \leq  k$.  Then by Proposition  \ref{prop:NEXeqNstEX}(1), we see  $f_{*} : H_{i} (X)  \to H_{i} (X)$ \  is an isomorphism for any  $i \leq k$ since $H_{k} (X)$ is a finitely generated abelian group. It follows that \ $f \in \calE (X)$ and hence   $N\calE (X) \leq  N_{*}\calE (X)$.

\noindent 
(2)  \ Let $k=N\calE (X)$.  Assume that \ $f_{*} : H_{i} (X) \to H_{i} (X)$  is an isomorphism for any  $i \leq  k$.  Then by Proposition  \ref{prop:NEXeqNstEX}(2), we see  $f_{\sharp} : \pi_{i} (X)  \to \pi_{i} (X)$  \  is an isomorphism for any  $i \leq k$ since $\pi_{k} (X)$ is a finitely generated abelian group. It follows that \ $f \in \calE (X)$ and hence   $N_{*}\calE (X) \leq  N\calE (X)$.

\noindent 
(3)  \ Let $k=N^{*}\calE (X)$.  Assume that \ $f_{*} : H_{i} (X) \to H_{i} (X)$ is an isomorphism for any  $i \leq  k$.  Then by Proposition  \ref{prop:NEXeqNstEX}(3), we see  $f^{*} : H^{i} (X)  \to H^{i} (X)$ \  is an isomorphism for any  $i \leq k$. It follows that \ $f \in \calE (X)$ and hence   $N_{*}\calE (X) \leq  N^{*}\calE (X)$.  
\end{proof}

\begin{corollary} \label{cor:NEXeqNstEX} 
$N_{*}\calE (X)  =    N\calE (X)$ if $X$ is $1$-connected  and $H_{k} (X)$  and $\pi_{k} (X)$ are finitely generated abelian groups for any $k$. 
\end{corollary}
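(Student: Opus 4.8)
The plan is to obtain the equality by simply combining parts (1) and (2) of Theorem \ref{thm:NEXandNstExRel}, each of which supplies one of the two inequalities $N\calE(X) \le N_{*}\calE(X)$ and $N_{*}\calE(X) \le N\calE(X)$. First I would observe that a $1$-connected space is in particular $0$-connected, so the connectivity hypotheses of both parts of the theorem are automatically satisfied. The remaining task is then merely to check that the finite-generation conditions attached to each of those inequalities are subsumed by the blanket assumption that $H_{k}(X)$ and $\pi_{k}(X)$ are finitely generated abelian groups for every $k$.

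For the inequality $N\calE(X) \le N_{*}\calE(X)$, part (1) requires $H_{k}(X)$ to be finitely generated only at the single index $k = N_{*}\calE(X)$, which is a special case of the hypothesis; symmetrically, for $N_{*}\calE(X) \le N\calE(X)$, part (2) requires $\pi_{k}(X)$ to be finitely generated only at $k = N\calE(X)$, again a special case. Applying the two parts and intersecting the resulting inequalities yields $N_{*}\calE(X) = N\calE(X)$.

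I expect essentially no obstacle here, since the substantive work has already been carried out in Proposition \ref{prop:NEXeqNstEX} and Theorem \ref{thm:NEXandNstExRel}, and the corollary is a formal consequence. The only point that warrants a moment of care is that the finiteness conditions enter precisely at the indices where a surjective endomorphism of the top group must be upgraded to a bijection (a finitely generated abelian group is Hopfian, so a surjective endomorphism is automatically an isomorphism); but both relevant indices are covered by the uniform hypothesis, so no further argument is needed.

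\begin{proof}
Since $X$ is $1$-connected it is $0$-connected. By the hypothesis, $H_{k}(X)$ is a finitely generated abelian group for $k = N_{*}\calE(X)$, so Theorem \ref{thm:NEXandNstExRel}(1) gives $N\calE(X) \le N_{*}\calE(X)$. Likewise $\pi_{k}(X)$ is a finitely generated abelian group for $k = N\calE(X)$, so Theorem \ref{thm:NEXandNstExRel}(2) gives $N_{*}\calE(X) \le N\calE(X)$. Combining these two inequalities yields $N_{*}\calE(X) = N\calE(X)$.
\end{proof}
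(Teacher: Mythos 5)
Your proof is correct and coincides with the paper's own argument, which likewise derives the corollary directly from Parts (1) and (2) of Theorem \ref{thm:NEXandNstExRel}. Your additional remarks about checking the finite-generation hypotheses at the specific indices are accurate and just make explicit what the paper leaves implicit.
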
 
\begin{proof} We have the result by  Parts (1) and (2) of Theorem  \ref{thm:NEXandNstExRel}.  
\end{proof}

\begin{proposition}  \label{prop:NcstEXleqNstEXp1} 
$N^{*}\calE (X) \leq  N_{*}\calE (X) + 1$ \  if $H_{q} (X)$ is finitely generated for any $q$.  
\end{proposition}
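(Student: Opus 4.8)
The plan is to set $k = N_{*}\calE(X)$ and to show that every self-map $f \colon X \to X$ whose induced cohomology homomorphisms $f^{*} \colon H^{i}(X) \to H^{i}(X)$ are isomorphisms for all $i \leq k+1$ already lies in $\calE(X)$; this gives $N^{*}\calE(X) \leq k+1$. To carry this out I would convert the cohomological hypothesis into a homological one through the functorial universal coefficient short exact sequence
\[0 \to {\rm Ext}(H_{q-1}(X), \Z) \to H^{q}(X) \to {\rm Hom}(H_{q}(X), \Z) \to 0\]
already used in the proof of Proposition \ref{prop:NEXeqNstEX}(3), which is natural with respect to $f$.

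The key observation is that, because every $H_{q}(X)$ is finitely generated, ${\rm Hom}(H_{q}(X), \Z)$ is free while ${\rm Ext}(H_{q-1}(X),\Z)$ is a torsion group; hence the image of the left-hand term is exactly the torsion subgroup of $H^{q}(X)$. Since $f^{*}$ is an automorphism of $H^{q}(X)$ for $q \leq k+1$, it necessarily carries this (characteristic) torsion subgroup isomorphically onto itself and induces an automorphism of the free quotient. By naturality these restricted and induced maps are precisely ${\rm Ext}(f_{*}, \Z)$ and ${\rm Hom}(f_{*}, \Z)$. I would therefore conclude that ${\rm Hom}(f_{*}, \Z)$ is an isomorphism for $q \leq k+1$ and that ${\rm Ext}(f_{*}, \Z)$, which occurs in the sequence for $H^{q+1}(X)$, is an isomorphism for $q \leq k$.

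Next I would recover $f_{*}$ itself. Writing $H_{q}(X) = F_{q} \oplus T_{q}$ with $F_{q}$ free and $T_{q}$ its torsion subgroup, the functors ${\rm Hom}(-,\Z)$ on free groups and ${\rm Ext}(-,\Z)$ on finite groups both reflect isomorphisms; thus the isomorphy of ${\rm Hom}(f_{*},\Z)$ forces $f_{*}$ to be an isomorphism on the free quotient $H_{q}(X)/T_{q}$ for $q \leq k+1$, while the isomorphy of ${\rm Ext}(f_{*},\Z)$ forces $f_{*}|_{T_{q}}$ (which is defined since any homomorphism preserves torsion) to be an isomorphism for $q \leq k$. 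Applying the five lemma to the short exact sequence $0 \to T_{q} \to H_{q}(X) \to H_{q}(X)/T_{q} \to 0$ then shows that $f_{*} \colon H_{q}(X) \to H_{q}(X)$ is an isomorphism for every $q \leq k$. By the definition of $k = N_{*}\calE(X)$ this means $f \in \calE(X)$, which completes the argument.

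The step I expect to be the real content, as opposed to bookkeeping, is the passage from ``$f^{*}$ is an isomorphism in degree $q$'' to ``both ${\rm Ext}(f_{*},\Z)$ in degree $q-1$ and ${\rm Hom}(f_{*},\Z)$ in degree $q$ are isomorphisms'': a single-degree cohomology isomorphism does not split the sequence a priori, and the argument hinges on the fact that an automorphism must preserve the torsion subgroup. This degree shift --- the torsion of $H_{q}(X)$ being detected only in $H^{q+1}(X)$ --- is exactly what produces the ``$+1$'' in the inequality, so the finite-generation hypothesis is essential and cannot be dropped without altering the bound.
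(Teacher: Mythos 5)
Your argument is correct, but it reaches the conclusion by a genuinely different route from the paper's. The paper invokes the dual universal coefficient sequence $0 \to {\rm Ext}(H^{q+1}(X),\Z) \to H_{q}(X) \to {\rm Hom}(H^{q}(X),\Z) \to 0$ (Spanier, 12 Theorem, p.~248), which is available precisely because each $H_{q}(X)$ is finitely generated; naturality and the five lemma then convert ``$f^{*}$ is an isomorphism in degrees $\leq k+1$'' into ``$f_{*}$ is an isomorphism in degrees $\leq k$'' in a single step. You instead start from the ordinary sequence $0 \to {\rm Ext}(H_{q-1}(X),\Z) \to H^{q}(X) \to {\rm Hom}(H_{q}(X),\Z) \to 0$ and reconstruct $f_{*}$ by hand: you identify the image of the ${\rm Ext}$ term with the torsion subgroup of $H^{q}(X)$ (using finite generation), note that an automorphism preserves this characteristic subgroup and hence induces automorphisms of ${\rm Ext}(f_{*},\Z)$ and ${\rm Hom}(f_{*},\Z)$, use that ${\rm Hom}(-,\Z)$ on finitely generated free abelian groups and ${\rm Ext}(-,\Z)$ on finite abelian groups both reflect isomorphisms, and finish with the short five lemma on $0 \to T_{q} \to H_{q}(X) \to H_{q}(X)/T_{q} \to 0$. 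Both proofs locate the ``$+1$'' in the same phenomenon --- the torsion of $H_{q}$ is detected only by $H^{q+1}$ --- but yours is self-contained modulo the standard universal coefficient theorem at the cost of extra duality bookkeeping, while the paper's is essentially a one-line five-lemma application once the less commonly quoted dual sequence is cited.
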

\begin{proof}  By 12 Theorem of Spanier \cite{Spanier66} (p.248), if  $H_{q} (X)$ is finitely generated abelian groups for any $q$, we have the following exact sequence:   
\[   0 \to {\rm Ext} (H^{q+1} (X), \Z) \to  H_{q} (X)    \to       {\rm Hom}(H^{q} (X), \Z)    \to 0. \] 
Let $k=N_{*}\calE (X)$.  Assume that \ $f^{*} : H^{i} (X)  \to H^{i} (X)$ is an isomorphism for any  $i \leq  k+1$.  Then by the above exact sequence, we see  $f_{*} : H_{i} (X)  \to H_{i} (X)$ \  is an isomorphism for any  $i \leq k$. It follows that \ $f \in \calE (X)$ and hence   $N^{*}\calE (X) \leq k + 1 =  N_{*}\calE (X) + 1$.  
\end{proof}

\begin{example}  \label{exa:SnwedMZ2} \rm  
Let  $n \geq 2$  and $X = S^{n} \vee M(\Z/2,n)$. Then we have $N\calE(X)  =N_{*}\calE(X) = n$ and $N^{*}\calE(X) = n + 1$. 
\end{example}

\begin{theorem}  \label{thm:NstEXleqNEX}    
\begin{enumerate} 
\item[{\rm (1)}]  \    If $X$ is $1$-connected and  $H_{k+1} (X) = 0$ for   $k = N\calE(X)$, then    $N_{*}\calE (X) \leq N\calE(X)$. 
\item[{\rm (2)}]   \  If  $X$ is $0$-connected and   $\pi_{k+1} (X) = 0$ for  $k  = N_{*}\calE (X)$, then $N\calE(X) \leq  N_{*}\calE (X)$.   
\item[{\rm (3)}]  \  Let $X$ be a space such that $H_i (X)$ is finitely generated for any $i$. If $H^{k+1} (X)$ is free for $k = N_{*}\calE(X)$, then  $N^{*}\calE (X)  \leq  N_{*}\calE(X)$. 
\end{enumerate}  
\end{theorem}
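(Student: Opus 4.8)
The plan is to prove all three parts by one common template: start from a self-map $f$ that is an isomorphism in the relevant invariant (homology, homotopy, or cohomology) up to degree $k = N_{*}\calE(X)$ (resp. $N\calE(X)$), then use the degree-$(k+1)$ hypothesis to upgrade the top-degree information so that the isomorphism transfers to the invariant computing the target closeness number, and finally invoke the defining identity $\calA_{*}^{k}(X) = \calE(X)$ (resp. $\calA_{\sharp}^{k}(X) = \calE(X)$). For \textbf{(1)}, let $k = N\calE(X)$ and suppose $f_{*}\colon H_{i}(X)\to H_{i}(X)$ is an isomorphism for $i\leq k$. Since $X$ is $1$-connected, I would apply the homology form of the Whitehead theorem in degree $k+1$: the required hypotheses ``$f_{*}$ iso for $i<k+1$'' and ``$f_{*}$ epi for $i=k+1$'' both hold, the latter trivially because $H_{k+1}(X)=0$. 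Hence $f_{\sharp}\colon\pi_{i}(X)\to\pi_{i}(X)$ is an isomorphism for $i\leq k$, so $f\in\calA_{\sharp}^{k}(X)=\calE(X)$, giving $\calA_{*}^{k}(X)=\calE(X)$ and therefore $N_{*}\calE(X)\leq k$.

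For \textbf{(2)}, let $k=N_{*}\calE(X)$ and suppose $f_{\sharp}\colon\pi_{i}(X)\to\pi_{i}(X)$ is an isomorphism for $i\leq k$. The point is that $\pi_{k+1}(X)=0$ makes $f_{\sharp}$ trivially an isomorphism in degree $k+1$ as well, so $f_{\sharp}$ is an isomorphism for $i\leq k+1$. Then the homotopy-to-homology direction of the Whitehead theorem (the one used in the proof of Proposition \ref{prop:NEXeqNstEX}(1)) yields $f_{*}\colon H_{i}(X)\to H_{i}(X)$ iso for $i\leq k$ and epi in degree $k+1$; no finite-generation hypothesis is needed because the degree has already been bumped up. Thus $f\in\calA_{*}^{k}(X)=\calE(X)$, and $N\calE(X)\leq k$.

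For \textbf{(3)}, let $k=N_{*}\calE(X)$ and suppose $f^{*}\colon H^{i}(X)\to H^{i}(X)$ is an isomorphism for $i\leq k$. I would exploit the functorial short exact sequence
\[ 0 \to {\rm Ext}(H^{i+1}(X),\Z) \to H_{i}(X) \to {\rm Hom}(H^{i}(X),\Z) \to 0 \]
of Spanier (valid since every $H_{q}(X)$ is finitely generated, as in Proposition \ref{prop:NcstEXleqNstEXp1}). For $i\leq k-1$, both $H^{i}$ and $H^{i+1}$ lie in the isomorphism range, so by naturality and the five lemma $f_{*}$ is an isomorphism. For $i=k$, freeness of $H^{k+1}(X)$ forces ${\rm Ext}(H^{k+1}(X),\Z)=0$, so the sequence collapses to $H_{k}(X)\cong{\rm Hom}(H^{k}(X),\Z)$, where $f_{*}$ corresponds to ${\rm Hom}(f^{*},\Z)$ and is an isomorphism. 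Hence $f_{*}$ is an isomorphism for $i\leq k$, so $f\in\calA_{*}^{k}(X)=\calE(X)$, giving $\calA^{*}_{k}(X)=\calE(X)$ and $N^{*}\calE(X)\leq k$.

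The step I expect to be the main obstacle is the naturality bookkeeping in \textbf{(3)}: one must verify that the self-map $f$ acts on Spanier's sequence with $f_{*}$ on $H_{i}(X)$ compatibly with ${\rm Hom}(f^{*},\Z)$ and ${\rm Ext}(f^{*},\Z)$ on the outer terms, which rests on the Kronecker-pairing identity $\langle f_{*}a,\xi\rangle=\langle a,f^{*}\xi\rangle$, before the five lemma can be applied. The remaining subtlety, present in \textbf{(1)} and \textbf{(2)}, is keeping precise track of the degree at which ``epimorphism'' is promoted to ``isomorphism''; this is exactly what the degree-$(k+1)$ vanishing (or freeness) hypotheses are designed to control.
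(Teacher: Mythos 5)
Your proof is correct and follows essentially the same route as the paper: parts (1) and (2) use the two directions of the Whitehead theorem, with the degree-$(k+1)$ vanishing hypothesis supplying the needed top-degree epimorphism, and part (3) uses the naturality of Spanier's universal-coefficient sequence together with the freeness of $H^{k+1}(X)$ to kill the ${\rm Ext}$ term. The only difference is that you spell out the degree bookkeeping and the naturality verification more explicitly than the paper does.
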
  
\begin{proof}   
(1)  Assume that  $N\calE(X)=k$.   Suppose that $f:X \to X$ is a map such that $f_{*} : H_{i}(X) \to H_{i}(X) $ is an isomorphism for any $i \leq k$. By the Whitehead Theorem (7.13) on p.181 of \cite{Whitehead78}, we see that $f_{\sharp} : \pi_{i}(X) \to \pi_{i}(X) $ is an isomorphism for any $i \leq k$ (and epimorphism for $i = k+1$). It follows that $f$ is a homotopy equivalence. 

\noindent 
(2)   Assume that  $N_{*}\calE (X) = k$.    Suppose that $f:X \to X$ is a map such that $f_{\sharp} : \pi_{i}(X) \to \pi_{i}(X) $ is an isomorphism for any $i \leq k$. By the Whitehead theorem (7.13) of \cite{Whitehead78}, we see that $f_{*} : H_{i}(X) \to H_{i}(X) $ is an isomorphism for any $i \leq k$ (and epimorphism for $i = k+1$). It follows that $f$ is a homotopy equivalence. 

\noindent 
(3)  Assume that $N_{*}\calE(X)=k$.    Suppose that $f:X \to X$ is a map such that $f^{*} : H^{i}(X) \to H^{i}(X) $ is an isomorphism for any $i \leq k$. 
By 12 Theorem (p.248) of Spanier  \cite{Spanier66},   we have    a funtorial short exact sequence 
\[ 0 \xrightarrow{ \ \ } {\rm Ext} (H^{q+1} (X), \Z) \xrightarrow{ \ \ } H_{q} (X)\xrightarrow{ \ h \ }   {\rm Hom} (H^{q} (X), \Z)  \xrightarrow{ \ \ }  0 \]  
for any $q$. We see \ ${\rm Ext} (H^{k+1} (X), \Z) =0$,  since $H^{k+1} (X)$ is free.  It follows then that $f_{*} : H_{i}(X) \to H_{i}(X) $ is an isomorphism for any $i \leq k$ and hence $f$ is a homotopy equivalence. 
\end{proof}

Choi and  Lee proved the following relation in Theorem 3 of \cite{ChoiLee15}: 
\[ N\calE (X \times Y) \geq  \max \{ N\calE (X), N\calE (Y) \} . \] 
We can show the following relations for $N_{*}\calE (X \times Y)$ and $N^{*}\calE (X \times Y)$: 

\begin{proposition}  \label{prop:NstEXXtiY}  \  Let $X$ and $Y$ be any spaces.  Then,  
\begin{enumerate}  
\item[{\rm (1)}]   \  $N_{*}\calE (X \times Y) \geq  \max \{ N_{*}\calE (X), N_{*}\calE (Y) \}$.  
\item[{\rm (2)}]  \  $N^{*}\calE (X \times Y) \geq  \max \{ N^{*}\calE (X), N^{*}\calE (Y) \}$.  
\end{enumerate}   
\end{proposition}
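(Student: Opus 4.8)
The plan is to prove each inequality by establishing the two bounds $N_*\calE(X\times Y)\geq N_*\calE(X)$ and $N_*\calE(X\times Y)\geq N_*\calE(Y)$ separately (and likewise for $N^*\calE$); since $X\times Y\simeq Y\times X$, it suffices to treat the first factor. Write $m=N_*\calE(X\times Y)$ and argue by contradiction, supposing $m<N_*\calE(X)$. Then $\calA_{*}^{m}(X)\neq\calE(X)$, and since every homotopy self-equivalence induces isomorphisms on all $H_i$ we always have $\calE(X)\subseteq\calA_{*}^{m}(X)$, so in fact $\calE(X)\subsetneq\calA_{*}^{m}(X)$; choose $f\in\calA_{*}^{m}(X)\setminus\calE(X)$. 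The goal is to show that $F:=f\times{\rm id}_Y:X\times Y\to X\times Y$ lies in $\calA_{*}^{m}(X\times Y)\setminus\calE(X\times Y)$, contradicting $m=N_*\calE(X\times Y)$.

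First I would show $F\in\calA_{*}^{m}(X\times Y)$, i.e.\ that $F_*:H_i(X\times Y)\to H_i(X\times Y)$ is an isomorphism for all $i\leq m$. Here I invoke the naturality of the homology K\"unneth short exact sequence
\[0 \to \bigoplus_{p+q=i} H_p(X)\otimes H_q(Y) \to H_i(X\times Y) \to \bigoplus_{p+q=i-1} {\rm Tor}(H_p(X),H_q(Y)) \to 0,\]
which holds for arbitrary spaces because singular chain complexes are free. The map $F_*$ acts as $f_*\otimes{\rm id}$ on the tensor summand and as ${\rm Tor}(f_*,{\rm id})$ on the ${\rm Tor}$ summand. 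For $i\leq m$ every index $p$ occurring satisfies $p\leq m$, so $f_*:H_p(X)\to H_p(X)$ is an isomorphism; hence both outer terms in the naturality diagram map isomorphically and the five lemma gives that $F_*$ is an isomorphism in degrees $\leq m$.

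Next I would show $F\notin\calE(X\times Y)$ by a retract argument that is valid for any spaces. Let $s=i_X:X\to X\times Y$, $x\mapsto(x,y_0)$, and $r=p_X:X\times Y\to X$, so that $r\circ s={\rm id}_X$, and note the identities $r\circ F=f\circ r$ and $F\circ s=s\circ f$. If $F$ were a homotopy equivalence with inverse $G$, then $g:=r\circ G\circ s$ would satisfy $f\circ g=r\circ(F\circ G)\circ s\simeq r\circ s={\rm id}_X$ and $g\circ f=r\circ(G\circ F)\circ s\simeq r\circ s={\rm id}_X$, making $f$ a homotopy equivalence and contradicting $f\notin\calE(X)$. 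Thus $F\notin\calE(X\times Y)$, completing the contradiction and proving $N_*\calE(X\times Y)\geq N_*\calE(X)$; the same reasoning with the roles of $X$ and $Y$ exchanged yields the maximum. For statement (2) the scheme is identical: replace $F_*$ by $F^*$ and use the cohomology cross product, so that $F^*$ acts as $f^*\otimes{\rm id}$ and ${\rm Tor}(f^*,{\rm id})$ on the respective K\"unneth summands, while the retract argument for $F\notin\calE(X\times Y)$ is verbatim.

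I expect the main obstacle to be the cohomology half of the K\"unneth step. Unlike the homology K\"unneth sequence, which is unconditional, the cohomology cross product is an isomorphism in a prescribed range only under a finite-type (or field-coefficient) hypothesis on one factor; so the five-lemma argument establishing that $F^*$ is an isomorphism in degrees $\leq m$ must be justified under such a hypothesis (automatic for the finite complexes of interest, where $H^*(X)$ and $H^*(Y)$ are finitely generated in each degree). The remaining ingredients -- the existence of $f$, the naturality diagrams, and the basepoint choices in the retract argument -- are routine.
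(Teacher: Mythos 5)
Your proposal is correct and follows essentially the same route as the paper's own proof: naturality of the K\"unneth decomposition to show $(f\times 1_Y)_*$ is an isomorphism in degrees $\leq k$, followed by the retract argument with $i_X$ and $p_X$ (the paper runs it directly, producing a homotopy inverse $p_X\circ G\circ i_X$ for $f$, rather than by contradiction, but this is the same argument). The one point where you diverge is the cohomology case: where you propose to impose a finite-type hypothesis so that the cohomology cross product is an isomorphism, the paper instead uses the K\"unneth formula for the Hom-complex ${\rm Hom}(S_{*}(X),{\rm Hom}(S_{*}(Y),\Z))$ (Weibel, Exercise 3.6.1), which expresses $H^{i}(X\times Y)$ through ${\rm Hom}(H_{p}(X),H^{q}(Y))$ and ${\rm Ext}(H_{p}(X),H^{q}(Y))$ and so keeps part (2) valid for arbitrary spaces.
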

\begin{proof}  (1) \  Let $N_{*}\calE (X \times Y) = k$ and $\max \{ N_{*}\calE (X), N_{*}\calE (Y) \} =  N_{*}\calE (X)$. 
Assume that a map $f: X \to X$ satisfies $f_{*} : H_{i} (X) \xrightarrow{\ \cong \ }  H_{i} (X)$ for any $i \leq  k$. Then 
$$(f \times 1_Y)_{*} : H_{i} (X \times Y) = \bigoplus_{0 \leq t \leq i} H_{i-t} (X) \otimes H_{t} (Y) \oplus \bigoplus_{0 \leq t \leq i -1} {\rm Tor} (H_{i-1-t} (X), H_{t} (Y))$$   
$$\xrightarrow{ \ \ \Phi  \ \ }  \bigoplus_{0 \leq t \leq i}  H_{i-t} (X) \otimes H_{i} (Y)  \oplus \bigoplus_{0 \leq t \leq i -1} {\rm Tor} (H_{i-1-t} (X), H_{t} (Y)) = H_{i} (X \times Y)$$  
is an isomorphism for any $i \leq  k$, where $\Phi =\oplus (f_{*} \otimes  1_{Y *}) \oplus  {\rm Tor}  (f_{*} ,  1_{Y *})$. 
Then there exists a map $G: X \times Y \to X \times Y$ such that $(f \times 1_Y) \circ G \simeq  1_{ X \times Y} \simeq  G \circ (f \times 1_Y)$. 

Let $i_{X} : X \to X \times Y$ be the inclusion and $p_{X} : X \times Y  \to X$ the projection. Consider the following commutative diagram:  
\[\xymatrix{
X \times Y  \ar[rr]^{f \times 1_Y} 
&  & X \times Y   \ar[rr]^{G}
&  &  X \times Y  \ar[d]^{ \   p_X } \ar[rr]^{f \times 1_Y}  &  &  X \times Y \ar[d]^{ \   p_X }  \\ 
X \ar[u]^{i_X \ } \ar[rr]^{f}&  &  X  \ar[u]^{i_X \ }  \ar[rr]^{p_X \circ G \circ i_X}
&  &   X  \ar[rr]^{f} &  &  X 
}\] 
Since $(f \times 1_Y) \circ  i_X =  i_X \circ f$ \  and \ $p_X  \circ   (f \times 1_Y)  =  f  \circ p_X$,   we have 
\[ (p_X \circ G \circ i_X) \circ f =  p_X \circ G \circ (f \times 1_Y) \circ  i_X \simeq    p_X \circ  1_{ X \times Y} \circ  i_X = 1_{X} ;   \] 
\[ f   \circ (p_X \circ G \circ i_X)=  p_X  \circ   (f \times 1_Y) \circ G \circ i_X  \simeq    p_X \circ  1_{ X \times Y} \circ  i_X = 1_{X} .  \] 
Therefore, $f \in \calE (X)$ \ and hence $N_{*}\calE (X) \leq k = N_{*}\calE (X \times Y)$.  
\vspace{1ex}

\noindent  
 (2) \  Let $N^{*}\calE (X \times Y) = k$ and $\max \{ N^{*}\calE (X), N^{*}\calE (Y) \} =  N^{*}\calE (Y)$. 
Assume that a map $f: Y \to Y$ satisfies $f^{*} : H^{i} (Y) \xrightarrow{\ \cong \ }   H^{i} (Y)$ for any $i \leq  k$. Then 
$$(1_X \times f)^{*} : H^{i} (X \times Y) = \bigoplus_{0 \leq t \leq i}  {\rm Hom} (H_{i-t} (X) , H^{t} (Y)) \oplus \bigoplus_{0 \leq t \leq i -1} {\rm Ext} (H_{i-1-t} (X), H^{t} (Y))$$   
$$\xrightarrow{ \ \   \Psi  \ \ }  \bigoplus_{0 \leq t \leq i} {\rm Hom} ( H_{i-t} (X) , H^{i} (Y) ) \oplus \bigoplus_{0 \leq t \leq i -1} {\rm Ext} (H_{i-1-t} (X), H^{t} (Y)) = H^{i} (X \times Y)$$   
is an isomorphism  for any $i \leq  k$, where $\Psi = {\rm Hom} (1_{X  *},  f^{*}) \oplus  {\rm Ext}  (1_{X  *},  f^{*})$. 
Then there exists a map $G: X \times Y \to X \times Y$ such that $(1_X \times f) \circ G \simeq  1_{ X \times Y} \simeq  G \circ (1_X \times f)$. Then by an argument similar to Part (1), we have $N^{*}\calE (Y) \leq k = N^{*}\calE (X \times Y)$.

In the formula of Exercise 3.6.1 (p.90) of Weibel \cite{Weibel94}: 
\[ 0 \to  \bigoplus_{p+q = n -1} {\rm Ext} (H_{p} (P), H^{q} (Q))   \to H^{n} ({\rm Hom} (P,Q)) \to  \bigoplus_{p+q = n}  {\rm Hom} (H_{p} (P), H^{q} (Q))  \to 0  ,  \]  
if we use the singular chain complex  $P = S_{*} (X)$ and  the singular cochain complex $Q= {\rm Hom} (S_{*} (Y), \Z)$  and the  isomorphism    
\[ {\rm Hom} (S_{*} (X) \otimes S_{*} (Y), \Z)  \cong  {\rm Hom} (S_{*} (X), {\rm Hom}( S_{*} (Y), \Z)), \] 
then we have the above K\"unneth formula. 
\end{proof}

\end{document}